\newtheorem{Theorem}{Theorem}[section]
\newtheorem{Definition}[Theorem]{Definition} 
\newtheorem{Proposition}[Theorem]{Proposition} 
\newtheorem{Lemma}[Theorem]{Lemma}
\newtheorem{Corollary}[Theorem]{Corollary}
\newtheorem{Remark}[Theorem]{Remark}  
\newtheorem{Assumption}[Theorem]{Assumption} 
\newtheorem{Example}[Theorem]{Example}
\DeclareMathOperator*{\argmin}{argmin}
\title{Stein's method of moment estimators for local dependency exponential random graph models}
\author{Adrian Fischer\footnote{Adrian Fischer, University of Oxford, UK. E-mail: adrian.fischer@stats.ox.ac.uk}, Gesine Reinert\footnote{Gesine Reinert, University of Oxford, UK. E-mail: reinert@stats.ox.ac.uk}, Wenkai Xu\footnote{Wenkai Xu, University of Warwick, UK. E-mail: wenkai.xu@warwick.ac.uk}}
\begin{document}
\maketitle

\begin{abstract}
    Providing theoretical guarantees for parameter estimation in exponential random graph models is a largely open problem.  While maximum likelihood estimation has theoretical guarantees in principle, verifying the assumptions for these guarantees to hold can be very difficult. Moreover, in complex networks, numerical maximum likelihood estimation is computer-intensive and may not converge in reasonable time. To ameliorate this issue, local dependency exponential random graph models have been introduced, which assume that the network consists of many independent exponential random graphs. In this setting, progress towards maximum likelihood estimation has been made. However the estimation is still computer-intensive. Instead, we use the Stein characterizations to obtain new estimators for local dependency exponential random graph models and show that  maximum pseudo-likelihood estimators for these models are retrieved as a special case. We provide concentration and asymptotic normality results for these maximum pseudo-likelihood estimators.
\end{abstract}

\noindent{{\bf{Keywords:}}} Local dependency exponential random graph model; maximum pseudo-likelihood estimation; point estimation; Stein's method

\section{Introduction}

Exponential random graph models are a key tool in social network analysis, see for example  \cite{harris2013introduction, lusher2013exponential, wasserman1994social}. They provide  a versatile model class for describing the likelihood of an observed network in terms of summary statistics such as the number of edges and  subgraph counts, and they allow for including exogenous information. Yet, estimation of parameters in exponential random graph models is a difficult problem;   the asymptotic behaviour of the maximum likelihood estimator is not well understood, see for example \cite{schweinberger2020exponential} and \cite{stewart2026rates}. Difficulties arise because the observations, given by edge indicators of a network, are not independent; moreover, the model is only specified up to a normalising constant which is usually intractable. 

However, approximate MCMC estimation methods for  maximum likelihood (MLE) and maximum pseudo-likelihood (MPLE), going back to \cite{besag1974spatial}, are available 
\cite{ergm, ergmmulti, ergm_new}. \cite{strauss1990pseudolikelihood} discuss the use of pseudo-likelihood estimation for social networks; they find that while in general MPLE tends to do almost as well as MLE when the MLE is available, exponential random graph $p_1$ models tend to be over-parameterised. In addition to  maximum likelihood and pseudo-likelihood estimation, the R package {\tt ergm} includes estimation via contrastive divergence as proposed in \cite{asuncion2010learning}. More numerical methods for obtaining approximate maximum likelihood estimators are available, see for example  \cite{snijders2002markov,  stivala2020exponential}. 
Instead of maximum likelihood estimation, \cite{snijders2002markov} also proposes a method of moments approach using the Robbins-Monro algorithm. Again, the asymptotic behaviour of these estimators is not well understood.

Recent advances towards theoretical guarantees for parameter estimation in exponential random graph models have been obtained by \cite{mukherjee2013statistics} for a model with edges and 2-stars as summary statistics, showing consistency for a particular set of generalised method of moment estimates. For general exponential random graph models, 
\cite{mukherjee2020degeneracy}  found that degeneracy issues can occur unless one considers a particular set of exponential random graph models, in which whose sufficient statistics depend only on the degree sequence of the network and satisfy some additional assumptions. It is for such restricted models that \cite{mukherjee2020degeneracy}  provides asymptotic consistency results for a least-squares estimator.

Parameter estimation in exponential random graph models often works well in practice when the networks are not too large; for large networks, the MCMC algorithms often do not converge (see for example \cite{stivala2020exponential}). Moreover, a practical problem that can arise in parameter estimation for small networks is that the maximum likelihood estimators may lie on, or very close to,  the boundary of the parameter space; in such a situation, \cite{yon2021exponential} report that a common approach is to pool small networks into a larger block-diagonal graph, in which edges between blocks are impossible.
If the small networks arise, for example, as samples from a larger graph such as ego-networks, it may be plausible to assume that the small networks follow the same exponential random graph model and are, at least as an approximation, independent of each other.

Following on from this idea, in this paper we study the local dependency exponential random graph model (LERGM) introduced in \cite{schweinberger2015local}. This model assumes that the network (graph) is composed of small networks which are independent of each other; each of these small networks follows an exponential random graph model with shared parameters. Examples of real data sets for which this model may be appropriate are Sampson's monastery network, a school classes data set, both with known block memberships, and a terrorist network with estimated block memberships, see  \cite{stewart2026rates} for details. In   \cite{stewart2026rates} the behaviour of the maximum likelihood estimator in this model is studied, and  asymptotic consistency and normality are derived to hold under certain conditions, with the help of results from \cite{raivc2019multivariate} that are based on Stein's method. 
The regime for these results is that the number of independent networks tends to infinity, in a way that the dependence within each small network only plays a minor role, and it is this independence between networks which is key to obtaining these results. Of course, as the model is based on exponential random graph models, obtaining a maximum likelihood estimator is subject to the same difficulties as obtaining maximum likelihood estimators in exponential random graph models. It is the number of independent graphs in the model that provide the theoretical guarantees. While \cite{stewart2026rates} gives bounds on the distance to normality for the MLE which detail the explicit dependence on the model assumptions, they are phrased in terms of the existence of absolute constants and hence cannot be evaluated directly for a given finite network model.

In this paper, we use the idea of Stein estimation to estimate the parameters of the LERGM model. Stein estimation can be viewed as a generalised method of moments estimator, which is based on a Stein operator of the target distribution; see for example \cite{anastasiou2023stein, ebner2025stein}. Here we use a slightly generalised Stein estimator. The latter approach leads to class of estimators, and we show that the pseudo MLE is retrieved as a special case. For this estimator, we give conditions for existence and uniqueness, and illustrate them with examples. For asymptotic properties, though, we need to restrict the parameter space to a compact space. With this restriction, a Stein estimator always exists, but it may no longer be unique. For any such Stein estimator we derive an explicit concentration inequality, and, using Stein's method, we obtain a fully explicit bound on the Wasserstein-1 distance between the distribution of the suitably scaled Stein estimator and the multivariate standard normal distribution. We also give criteria which ensure asymptotic consistency and asymptotic normality when the number of small networks tends to infinity; here we can even allow the sizes of the individual networks to (slowly) tend to infinity as well. The assumptions are similar to standard assumptions for maximum likelihood estimation. In contrast to \cite{stewart2026pseudo} (version 8), consistency results are obtained in a regime which does not require the number of parameters to tend to infinity. Similarly, we provide an  explicit bound on the distance to standard normal which does not rely on asymptotic assumptions. These results open the door to conservative hypothesis tests in a non-asymptotic setting.

The paper is structured as follows. In Section \ref{section_lergm}, after some notations, the exponential random graph model as well as the local dependency exponential random graph model from \cite{schweinberger2015local} are  introduced. Section \ref{section_steins_metod_of_moments} gives an introduction to Stein estimation and applies it to parameter estimation in a LERGM. It provides assumptions under which existence and uniqueness of the estimators are guaranteed. Uniqueness of the Stein estimator relies on the parameter space being the whole Euclidean space. For asymptotic normality, treated in Section \ref{section_convergence_analysis}, however, a compact parameter space is needed. Hence a slightly different Stein estimator is introduced in Section \ref{section_convergence_analysis}, and a method is provided to choose a unique Stein estimator from a set of possible Stein estimator.  For this Stein estimator, explicit concentration bounds and bounds to the (Wasserstein) distance of an appropriate normal distribution are obtained in Section \ref{section_convergence_analysis}. Auxiliary results for proofs are deferred to the Appendix.

\section{The local dependency exponential random graph model} \label{section_lergm} 

First we introduce come notation. An undirected, unweighted graph $(A,\mathbbm{x})$ consists of a vertex set $A$ and an edge set $\mathbbm{x} \in \mathbb{X}$, where $\mathbb{X}$ is the set of all possible edge constellations. We let $E$ denote the set of all possible edges, we therefore assign to each possible edge a unique label $m \in E$. Moreover, we label all vertices with numbers from $1$ to $\vert A \vert$; we  also write $\mathbb{X}=\{0,1\}^{\vert E \vert}$. Throughout the paper we will refer to a graph on the vertex set  $A$ as the edge set $\mathbbm{x}  \in \mathbb{X}$ and we consider a random graph $\mathbf{X}$ to be a random element with values in $\mathbb{X}$. We write $\langle \cdot, \cdot \rangle$ for the standard scalar product and $\Vert \cdot \Vert$ for the standard norm on Euclidean space. We  let $B(x,\delta)$ be the open ball around $x$ with radius $\delta>0$ with respect to the standard Euclidean norm. We denote by $\Vert \cdot \Vert_{\infty}$ the maximum norm. For a square matrix $W \in \mathbb{R}^{d\times d}$ we write $\Vert W \Vert = \sup\{\Vert Wx \Vert \, \vert \, x \in \mathbb{R}^d, \Vert x \Vert = 1 \} = \sqrt{\lambda_{\mathrm{max}}(W^{\top}W)}$ for the spectral norm and $\Vert W \Vert_F^2 = \sum_{1 \leq i,j \leq d} W_{i,j}^2 $ for the Frobenius norm. For a  function $f:\mathbb{X} \rightarrow \mathbb{R}^d $, we introduce the operator  $\Diamond_{m}^{1}$  acting on the graph which sets the $m$th edge equal to $1$ and $\Diamond_{m}^{0}\mathbbm{x}$, respectively. In the social networks literature, the notation $f(\Diamond_{m}^{0}\mathbbm{x}) = f( \mathbbm{x}^{(m,0)})$ is often used; as $\mathbbm{x}$ will later carry subscripts and superscripts, we prefer this disentangled notation. Moreover, we let $\Delta_m f(\mathbbm{x})= f(\Diamond_{m}^{1}\mathbbm{x})-f(\Diamond_{m}^{0}\mathbbm{x})$; the operator $\Delta_m$ is also called the {\it change-one} operator, and $\Delta_m f(\mathbbm{x})$ a {\it change-one statistic}. We let  $\sigma(t)=\frac{1}{1+e^{-t}}$ denote the sigmoid function. We define the exponential random graph model on $\mathbb{X}$.
 \par

\begin{Definition}[Exponential random graph model]
    For $d \in \mathbb{N}$, let $\beta = (\beta_1,\ldots,\beta_d) \in \mathbb{R}^d$ be a parameter, and let $s:\mathbb{X} \rightarrow \mathbb{R}^d$ be a function. Then the exponential random graph model $\mathrm{ERGM}(\beta)$ is the probability distribution on $\mathbb{X}$ with density $p(\mathbbm{x}) \propto \exp(\langle \beta, s(\mathbbm{x}) \rangle )$, for $ \mathbbm{x} \in \mathbb{X}$.
\end{Definition}

Usually, the function $s$ is given and represents the sufficient statistics of the model and one aims to estimate the parameter vector $\beta$ from a given realisation $\mathbf{X} \sim \mathrm{ERGM}(\beta^{\star})$. However, parameter estimation for the exponential random graph model is often difficult due to heavily dependent observations and an intractable normalising constant, except in special cases. Therefore, we focus in the present work on a local dependency exponential random graph model that exhibits an additional structure which resembles the classical case of independent and identically distributed observations. The model was first introduced in \cite{schweinberger2015local}; consistency as well as non-asymptotic error bounds and normal approximation for the maximum likelihood estimator were developed in \cite{schweinberger2020concentration} and \cite{stewart2026rates}. \par

We assume that the vertex set $A$ can be partitioned into $K$ neighbourhoods, or blocks, $A_1, \ldots, A_K$ such that $A = \cup_{k=1}^K A_k$; we denote this partition by $\mathbb{A}$. Moreover we define the subgraphs
\begin{align*}
    \mathbbm{x}_{k,l} = 
    \begin{cases} 
    \{x_{i,j}  \, \vert \, i,j \in A_k \} \in  \mathbb{X}_{k,k}:=\{0,1\}^{{\vert A_k \vert (\vert A_k \vert -1)/}{2}},  & k=l \\
    \{x_{i,j}   \, \vert \, i \in A_k, j \in A_l \} \in  \mathbb{X}_{k,l}:=\{0,1\}^{\vert A_k \vert \vert A_l \vert},  & k\neq l
    \end{cases},
\end{align*}
where we write $x_{i,j}  \in \{0,1\}$ for the edge indicator between vertices $i$ and $j$ in $\mathbbm{x}$. We let $\mathbb{X}(\mathbb{A})$ denote the set of possible networks with the partition $\mathbb{A}$; $\mathbbm{x} \in \mathbb{X}(\mathbb{A})$  has the blocks $\mathbbm{x}_{k,k}$, $ k=1,\ldots,  K$, and $\mathbbm{x}_{k,l}$, $ k \neq l$. We  refer to $\mathbbm{x}_{k,k} $ as a within-block subgraph and to $\mathbbm{x}_{k,l}$ for $ k \neq l$ as a between-block subgraph and call the elements of the latter two sets within-block and between-block edges. We also introduce the edge label sets $E_{k,l} = \{ (u,v): u \in A_k, v \in A_l\} $ with the convention that for $k=l$ we require $u < v$ ($(u,v)$ and $(v,u)$ are considered as the same element). 

\begin{Definition}[Local dependency exponential random graph model]
    For $d_1,d_2 \in \mathbb{N}$, let $\beta_W \in \mathbb{R}^{d_1}$, $\beta_B \in \mathbb{R}^{d_2}$ and $ \beta = (\beta_W,\beta_B) \in \mathbb{R}^{d_1+d_2}$. Moreover, let $s_{k,l}$ be functions such that $s_{k,l}:\mathbb{X}_{k,l} \rightarrow \mathbb{R}^{d_1}$ if $k=l$ and $s_{k,l}:\mathbb{X}_{k,l} \rightarrow \mathbb{R}^{d_2}$ if $k \neq l$ and let
    \begin{align*}
        s_W(\mathbbm{x}) = \sum_{1\leq k \leq K} s_{k,k}(\mathbbm{x}_{k,k}) \qquad \text{and} \qquad s_B(\mathbbm{x}) = \sum_{1\leq k <l \leq K} s_{k,l}(\mathbbm{x}_{k,l})
    \end{align*}
    as well as $s(\mathbbm{x}) = (s_W(\mathbbm{x}),s_B(\mathbbm{x})): \mathbb{X} \rightarrow \mathbb{R}^{d_1 + d_2} $. Then the local dependency exponential random graph model $\mathrm{LERGM}(\beta)$ is the probability distribution on $\mathbb{X}$ with density
    \begin{align*}
        p(\mathbbm{x}) \propto & \exp(\langle \beta, s(\mathbbm{x}) \rangle )  =  \exp \bigg(  \sum_{1\leq k \leq K} \langle \beta_W, s_{k,k}(\mathbbm{x}_{k,k}) \rangle  + \sum_{1\leq k <l \leq K} \langle \beta_B, s_{k,l}(\mathbbm{x}_{k,l}) \rangle \bigg), \qquad \mathbbm{x} \in \mathbb{X}.
    \end{align*}  
\end{Definition}
The density is of exponential family form, see for example \cite{sundberg2019statistical}. The normalising constant is usually intractable, which makes inference challenging. Note however that if $\mathbf{X} \sim \mathrm{LERGM}(\beta)$ we have $\mathbb{P}(\mathbf{X} = \mathbbm{x}) = \prod_{1 \leq k \leq l \leq K} \mathbb{P}(\mathbf{X}_{k,l} = \mathbbm{x}_{k,l} )$, where $\mathbf{X}_{k,l} $ are the subgraphs of $\mathbf{X}$ and therefore within-block edges are independent of between-block edges,  and edges in different within- or between-block subgraphs are also independent. This independence plays a crucial role in the asymptotic analysis of the estimators. Here and in what follows we use the shorthand that "edges are independent" for "the edge indicator functions are independent". 
We denote the largest number of vertices in a block by
\begin{align} \label{maximum_nb_vertices}
    M = \max_{1\leq k \leq K} \vert A_k \vert . 
\end{align}

\section{Stein's method of moments}\label{section_steins_metod_of_moments}
Our objective is to estimate the parameter $\beta$ given an observation $\mathbf{X} \sim \mathrm{LERGM}(\beta)$ via Stein's method of moments as introduced in \cite{ebner2025stein} (see also \cite{arnold2001multivariate} for an earlier reference). Stein's method of moments is a method of moments-type point estimation approach based on the characterising property of Stein operators. For a random variable $X$ following a probability distribution $\mathbb{P}_{\theta}$ depending on an unknown parameter $\theta \in \mathbb{R}^d$, a Stein operator is an operator $\mathcal{A}_{\theta}$ acting on a class of functions $\mathcal{F}$ such that
\begin{align} \label{stein_operator_def}
    \mathbb{E}[\mathcal{A}_{\theta}f(X)] = 0
\end{align}
for all functions $f \in \mathcal{F}$. Given an i.i.d.\ sample $X_1, \ldots, X_n \sim \mathbb{P}_{\theta}$, choose a $d$-dimensional test function in $\mathcal{F}$ and replace the expectation in \eqref{stein_operator_def} by the sample mean, which gives the $d$ equations
\begin{align} \label{emp_stein_operator}
    \frac{1}{n}\sum_{i=1}^n \mathcal{A}_{\theta}f(X_i) = 0.
\end{align}
Solving \eqref{emp_stein_operator} for $\theta$ then gives an estimator $\hat{\theta}_n$ based on the sample $X_1, \ldots, X_n$ which we call a Stein estimator. Stein estimators are a special case of $M$-estimators as introduced in \cite{huber1964robust}, and of estimators obtained via generalised estimating equations, see \cite{liang1986longitudinal}.

Stein's method of moments is applicable to a large class of distributions and allows for great flexibility in choosing the test functions. Moreover, Stein operators do often not involve the normalising constant such that estimators are often available in closed-form even in cases in which  standard procedures such as maximum likelihood estimation (MLE) require numerical methods. As a consequence, Stein's method of moments has been applied in complicated paradigms such as multivariate truncated distributions \cite{fischer2025stein} and directional distributions \cite{fischer2026stein} for which maximum likelihood estimation is not straightforward. \par

In this paper we develop Stein estimation for the LERGM. The first step is to find a suitable Stein operator; in principle, many choices are possible, see for example \cite{mijoule2023stein}. In \cite{reinert2019approximating} the authors propose a Glauber dynamics Stein operator for the exponential random graph model. Their operator reads
\begin{align*} 
    \mathcal{A}_{\beta}^{\star} f(\mathbbm{x}) = \frac{1}{\vert E \vert} \sum_{m\in E}  \big( \sigma ( \langle \beta , \Delta_m s(\mathbbm{x}) \rangle )  \Delta_m  f(\mathbbm{x}) +  f(\Diamond_{m}^{0}\mathbbm{x}) - f(\mathbbm{x}) \big) , \qquad \mathbbm{x} \in \mathbb{X},
\end{align*}
where $f$ denotes a test function $f:\mathbb{X} \rightarrow \mathbb{R}^d $. Let 
$\mathcal{F} = 
\{ f=(f_{k,l}: \mathbb{X}_{k,l} \rightarrow \mathbb{R}, \, 1 \leq k \leq l \leq K)\}$ so that each element of $\mathcal{F}$ is a {\it collection} of real-valued test functions acting on subgraphs. For a given LERGM with partition $A_1, \ldots, A_K$ and within- and between-  block subgraphs $\mathbbm{x}_{k,l}$, $1 \leq k \leq l \leq K$, we define the operator
\begin{align*}
\begin{split}
    \mathcal{A}_{\beta} f(\mathbbm{x}) =  &\sum_{1 \leq k \leq K} \sum_{m\in E_{k,k}} \big( \sigma ( \langle \beta_W , \Delta_m s_{k,k}(\mathbbm{x}_{k,k}) \rangle )  \Delta_m f_{k,k}(\mathbbm{x}_{k,k}) +  f_{k,k}(\Diamond_{m}^{0}\mathbbm{x}_{k,k}) - f_{k,k}(\mathbbm{x}_{k,k})  \big) \\
     & + \sum_{1 \leq k < l \leq K} \sum_{m\in E_{k,l}} \big( \sigma ( \langle \beta_B , \Delta_m s_{k,l}(\mathbbm{x}_{k,l}) \rangle )  \Delta_m f_{k,l}(\mathbbm{x}_{k,l}) +  f_{k,l}(\Diamond_{m}^{0}\mathbbm{x}_{k,l}) - f_{k,l}(\mathbbm{x}_{k,l})   \big)
\end{split}
\end{align*}
acting on $\mathcal{F}$. We also define, for each $1 \leq k \leq l \leq K$ and 
$f_{k,l}: \mathbb{X}_{k,l} \rightarrow \mathbb{R} $, the operator 
\begin{align*}
    \mathcal{A}_{\beta_{\bullet}}^{k,l} f_{k,l}(\mathbbm{x}_{k,l}) =  \sum_{m\in E_{k,l}} \big( \sigma ( \langle \beta_{\bullet} , \Delta_m s_{k,l}(\mathbbm{x}_{k,l}) \rangle )  \Delta_m f_{k,l}(\mathbbm{x}_{k,l}) +  f_{k,l}(\Diamond_{m}^{0}\mathbbm{x}_{k,l}) - f_{k,l}(\mathbbm{x}_{k,l}) \big),
\end{align*}
where $\bullet=W$ if $k=l$ and $\bullet=B$ if $k<l$. Thus, 
\begin{align} \label{stein_operator_subgraph_sum}
    \mathcal{A}_{\beta} = \sum_{1 \leq k \leq l \leq K} \mathcal{A}_{\beta_{\bullet}}^{k,l} .
\end{align} 

In the next theorem, we prove that the expectation is indeed $0$ for all collections $f \in \mathcal{F}$, showing that $\mathcal{A}_{\beta}$ is a Stein operator for the local dependency exponential random graph model. We prove this claim by showing that  $\mathcal{A}_{\beta_{\bullet}}^{k,l}$, $1 \leq k \leq l \leq K$ are Stein operators for the distribution of the within- or between block subgraphs.
\begin{Theorem} \label{theorem_stein_identity}
    Let $\mathbf{X} \sim \mathrm{LERGM}(\beta)$ and $1 \leq k \leq l \leq K$. Then,   for all functions $f_{k,l}: \mathbb{X}_{k,l} \rightarrow \mathbb{R}$, $\mathbb{E}[\mathcal{A}_{\beta_{\bullet}}^{k,l} f_{k,l}(\mathbf{X}_{k,l})]=0 $,  and for all collections $f \in \mathcal{F}$, we have $  \mathbb{E}[\mathcal{A}_{\beta}f(\mathbf{X})]=0$.
\end{Theorem}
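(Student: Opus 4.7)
The plan is to reduce the statement to a single-ERGM Stein identity via the product structure of $\mathrm{LERGM}(\beta)$, and then to establish that identity by conditioning on the remaining edges within the block, one edge at a time.

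First, I would observe that the LERGM density factorises as $p(\mathbbm{x}) \propto \prod_{1 \le k \le l \le K} \exp(\langle \beta_{\bullet}, s_{k,l}(\mathbbm{x}_{k,l}) \rangle)$, so the subgraphs $\mathbf{X}_{k,l}$ are mutually independent and the marginal law of $\mathbf{X}_{k,l}$ is the exponential random graph model on $\mathbb{X}_{k,l}$ with parameter $\beta_{\bullet}$ and sufficient statistic $s_{k,l}$. Consequently it is enough to prove the single-block claim $\mathbb{E}[\mathcal{A}_{\beta_{\bullet}}^{k,l} f_{k,l}(\mathbf{X}_{k,l})]=0$ for each $1\le k\le l\le K$; the full statement then follows by linearity of expectation and \eqref{eq:steinsum}.

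For the single-block claim, fix $(k,l)$ and a single edge label $m \in E_{k,l}$, and let $\mathcal{G}_m$ denote the $\sigma$-algebra generated by the coordinates $(\mathbf{X}_{k,l})_{m'}$ for $m'\ne m$. By definition $\Delta_m s_{k,l}(\mathbf{X}_{k,l}) = s_{k,l}(\Diamond_m^{1}\mathbf{X}_{k,l}) - s_{k,l}(\Diamond_m^{0}\mathbf{X}_{k,l})$ does not depend on the value of the $m$-th edge, and similarly for $\Delta_m f_{k,l}(\mathbf{X}_{k,l})$ and for $f_{k,l}(\Diamond_m^{0}\mathbf{X}_{k,l})$, so all three are $\mathcal{G}_m$-measurable. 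The crucial computation is the ratio of densities for the marginal ERGM on $\mathbb{X}_{k,l}$, which gives $\mathbb{P}((\mathbf{X}_{k,l})_m = 1 \mid \mathcal{G}_m) = \sigma(\langle \beta_{\bullet}, \Delta_m s_{k,l}(\mathbf{X}_{k,l}) \rangle)$, since the conditional odds equal $\exp(\langle \beta_{\bullet}, \Delta_m s_{k,l}(\mathbf{X}_{k,l}) \rangle)$. Therefore $\mathbb{E}[f_{k,l}(\mathbf{X}_{k,l}) - f_{k,l}(\Diamond_m^{0}\mathbf{X}_{k,l}) \mid \mathcal{G}_m] = \sigma(\langle \beta_{\bullet}, \Delta_m s_{k,l}(\mathbf{X}_{k,l}) \rangle)\, \Delta_m f_{k,l}(\mathbf{X}_{k,l})$, so the conditional expectation of the $m$-th summand in \eqref{lergm_glauber_dynamics_operator_kl} vanishes. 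Taking the outer expectation and summing over $m \in E_{k,l}$ yields $\mathbb{E}[\mathcal{A}_{\beta_{\bullet}}^{k,l} f_{k,l}(\mathbf{X}_{k,l})]=0$, and summing over $(k,l)$ via \eqref{eq:steinsum} gives the second assertion.

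The main obstacle is essentially bookkeeping rather than a conceptual difficulty: one has to be careful that the Glauber flip $\Diamond_m^{0}$ and the difference $\Delta_m$ act only within the block $(k,l)$, and that the conditional-odds calculation is performed against the marginal ERGM for that block. This is where the factorisation of the density is used in a nontrivial way, because the contributions from all other blocks appear in both numerator and denominator of the conditional probability and cancel exactly. Once this is checked, the argument is the block-wise analogue of the single-graph Glauber-dynamics computation of \cite{reinert2019approximating}.
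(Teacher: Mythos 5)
Your proof is correct and follows essentially the same route as the paper: conditioning on the edges other than $m$, identifying the conditional probability of the $m$-th edge as $\sigma(\langle \beta_{\bullet}, \Delta_m s_{k,l}(\mathbf{X}_{k,l})\rangle)$, and concluding via linearity and \eqref{eq:steinsum}. Your version is slightly more explicit about the $\mathcal{G}_m$-measurability of the relevant terms and the block-wise factorisation of the density, but the key cancellation is identical to the paper's.
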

\begin{proof}
We calculate the expectation and for each $m \in E_{k,l}$, $1\leq k \leq l \leq K$, we condition on the rest of the graph inside the sums. This gives
\begin{align*}
    \mathbb{E}[\mathcal{A}_{\beta_{\bullet}}^{k,l} f_{k,l}(\mathbf{X}_{k,l})]= & 
     \sum_{m\in E_{k,l}} \mathbb{E}\bigg[ \sigma ( \langle \beta_{\bullet} , \Delta_m s_{k,l}(\mathbf{X}_{k,l}) \rangle )  \Delta_m f_{k,l}(\mathbf{X}_{k,l}) \\
    &  - \Delta_m f_{k,l}(\mathbf{X}_{k,l}) \frac{\exp(\langle \beta_{\bullet} , s_{k,l}(\Diamond_{m}^{1}\mathbf{X}_{k,l}) \rangle)}{\exp(\langle \beta_{\bullet} , s_{k,l}(\Diamond_{m}^{0}\mathbf{X}_{k,l}) \rangle)+\exp(\langle \beta_{\bullet} , s_{k,l}(\Diamond_{m}^{1}\mathbf{X}_{k,l}) \rangle)} \bigg].
\end{align*}
Noting that
\begin{align*}
    \frac{\exp(\langle \beta_{\bullet} , s_{k,l}(\Diamond_{m}^{1}\mathbf{X}_{k,l}) \rangle)}{\exp(\langle \beta_{\bullet} , s_{k,l}(\Diamond_{m}^{0}\mathbf{X}_{k,l}) \rangle)+\exp(\langle \beta_{\bullet} , s_{k,l}(\Diamond_{m}^{1}\mathbf{X}_{k,l}) \rangle)} =  \sigma ( \langle \beta_{\bullet} , \Delta_m s_{k,l}(\mathbf{X}_{k,l}) \rangle )
\end{align*}
gives the first claim. The second claim follows directly from \eqref{stein_operator_subgraph_sum}.
\end{proof}
Following the approach of Stein's method of moments outlined earlier in this section, in order to estimate the parameter $\beta$ of the local dependency exponential random graph model, for each pair $(k,l)$ such that $1 \leq k \leq l \leq K$ we choose a test functions $f_{k,l}: \mathbb{X}_{k,l} \rightarrow \mathbb{R}^{d_1}$ if $k=l$, $f_{k,l}: \mathbb{X}_{k,l} \rightarrow \mathbb{R}^{d_2}$ if $k<l$ and then solve the equations
\begin{align} \label{lergm_estimating_eq}
    \mathcal{A}_{\beta_W}^{k,k}f_{k,k}(\mathbf{X}_{k,k})&=0, \qquad 1\leq k  \leq K;  \quad \quad  \mathcal{A}_{\beta_B}^{k,l}f_{k,l}(\mathbf{X}_{k,l})=0, \qquad 1\leq k < l \leq K
\end{align}
for $\beta$ given an observed network $\mathbf{X} \sim \mathrm{LERGM}(\beta^\star)$, where $\beta^\star$ is the true parameter. This is a generalisation of the standard Stein estimator which would solve 
$ \mathcal{A}_{\beta}f(\mathbf{X})=0$ for functions $f$, 
in that here the argument $f$ is a {\it collection} of functions.  This generalisation is crucial for our approach; 
for $(k,l)$ we choose as test function $f_{k,l}$ in \eqref{lergm_estimating_eq} the statistic  $s_{k,l}$. Then we can write \eqref{lergm_estimating_eq} as
\begin{align} \label{lergm_estimating_eq_detail_for_each_bloc}
\begin{split}
    \sum_{m\in E_{k,k}} \big( \sigma ( \langle \beta_W , \Delta_m s_{k,k}(\mathbf{X}_{k,k}) \rangle )  \Delta_m s_{k,k}(\mathbf{X}_{k,k}) +  s_{k,k}(\Diamond_{m}^{0}\mathbf{X}_{k,k}) - s_{k,k}(\mathbf{X}_{k,k}) \big)  = 0,  \qquad 1\leq k  \leq K, \\
   \sum_{m\in E_{k,l}}  \big( \sigma ( \langle \beta_B , \Delta_m s_{k,l}(\mathbf{X}_{k,l}) \rangle )  \Delta_m s_{k,l}(\mathbf{X}_{k,l}) +  s_{k,l}(\Diamond_{m}^{0}\mathbf{X}_{k,l}) - s_{k,l}(\mathbf{X}_{k,l}) \big)   = 0, \qquad 1\leq k < l \leq K.
\end{split}
\end{align}
Next we sum \eqref{lergm_estimating_eq_detail_for_each_bloc} over all $k,l$ to obtain the two equations 
\begin{align} \label{lergm_estimating_eq_detail}
\begin{split}
    & \sum_{1 \leq k\leq K}  \sum_{m\in E_{k,k}} \big( \sigma ( \langle \beta_W , \Delta_m s_{k,k}(\mathbf{X}_{k,k}) \rangle )  \Delta_m s_{k,k}(\mathbf{X}_{k,k}) +  s_{k,k}(\Diamond_{m}^{0}\mathbf{X}_{k,k}) - s_{k,k}(\mathbf{X}_{k,k})  \big) = 0; \\
     &   
    \sum_{1 \leq k <  l \leq K}  \sum_{m\in E_{k,l}} \big( \sigma ( \langle \beta_B , \Delta_m s_{k,l}(\mathbf{X}_{k,l}) \rangle )  \Delta_m s_{k,l}(\mathbf{X}_{k,l}) +  s_{k,l}(\Diamond_{m}^{0}\mathbf{X}_{k,l}) - s_{k,l}(\mathbf{X}_{k,l}) \big) = 0.
\end{split}
\end{align}

For an observed network $\mathbf{X} \sim \mathrm{LERGM}(\beta^\star)$ we then solve \eqref{lergm_estimating_eq_detail} for $\beta_W $ and $\beta_B$ to estimate the parameter $\beta$. Note that \eqref{lergm_estimating_eq_detail} indeed gives rise to $d_1+d_2$ equations.

\begin{Remark} 
    Instead of choosing the statistics $s_{k,l}$ as test functions, the Stein estimation framework allows for many other choices. Our choice is of particular interest as in this case, the estimator LERGM-Stein estimator $\hat{\beta}$ equals the pseudo MLE from \cite{stewart2026pseudo}. To see this, for an observed $\mathbf{X} \in \mathbb{X}(\mathbb{A})$ the pseudo MLE maximises the sum of conditional probabilities
    \begin{align*}
        \sum_{1 \leq k \leq l \leq K} \sum_{m \in E_{k,l}} \log p( \mathbf{X}_{k,l} \, \vert \, (\mathbf{X}_{k,l})_{-m} ) 
    \end{align*}
    with respect to $\beta$, where $(\mathbf{X}_{k,l})_{-m} $ is the collection of edge indicators in the subgraph $\mathbf{X}_{k,l}$ without the edge indicator for the $m$th possible edge. We compute
    \begin{align*}
        &\sum_{1 \leq k \leq l \leq K}  \sum_{m \in E_{k,l}} \log p( \mathbf{X}_{k,l} \, \vert \, (\mathbf{X}_{k,l})_{-m} ) \\
         &= \sum_{1 \leq k \leq K} \sum_{m \in E_{k,k}} \mathbbm{1}(m) \log \sigma(\langle \beta_{W}, \Delta_m s_{k,k}(\mathbf{X}_{k,k}) \rangle )  + (1-\mathbbm{1}(m)) \log (1- \sigma(\langle \beta_{W}, \Delta_m s_{k,k}(\mathbf{X}_{k,k}) \rangle ) ) \\
         & \quad + \sum_{1 \leq k \leq l \leq K} \sum_{m \in E_{k,l}} \mathbbm{1}(m) \log \sigma(\langle \beta_{B}, \Delta_m s_{k,l}(\mathbf{X}_{k,l}) \rangle )  + (1-\mathbbm{1}(m)) \log (1- \sigma(\langle \beta_{B}, \Delta_m s_{k,l}(\mathbf{X}_{k,l}) \rangle ) ),
    \end{align*}
     where we wrote $\mathbbm{1}(m)$ for the indicator function which is equal to $1$ if the $m$th possible edge is present and $0$ otherwise. Differentiating with respect to $\beta_W$ gives
    \begin{align*}
        & \sum_{1 \leq k  \leq K} \sum_{m \in E_{k,k}} \mathbbm{1}(m) \Delta_m s_{k,k}(\mathbf{X}_{k,k}) \frac{\sigma'(\langle \beta_W, \Delta_m s_{k,k}(\mathbf{X}_{k,k}) \rangle )}{\sigma(\langle \beta_W, \Delta_m s_{k,k}(\mathbf{X}_{k,k}) \rangle )}  \\
        & \qquad \qquad + (1-\mathbbm{1}(m)) \Delta_m s_{k,k}(\mathbf{X}_{k,k}) \frac{\sigma'(\langle \beta_W, \Delta_m s_{k,k}(\mathbf{X}_{k,k}) \rangle )}{1-\sigma(\langle \beta_W, \Delta_m s_{k,k}(\mathbf{X}_{k,k}) \rangle )} \\
        =& \sum_{1 \leq k \leq K} \sum_{m \in E_{k,k}} \mathbbm{1}(m) \Delta_m s_{k,k}(\mathbf{X}_{k,k}) \frac{\sigma'(\langle \beta_W, \Delta_m s_{k,k}(\mathbf{X}_{k,k}) \rangle )}{\sigma(\langle \beta_W, \Delta_m s_{k,k}(\mathbf{X}_{k,k}) \rangle )}  \\
        & \qquad \qquad + (1-\mathbbm{1}(m)) \Delta_m s_{k,k}(\mathbf{X}_{k,k}) \frac{\sigma'(\langle \beta_W, \Delta_m s_{k,k}(\mathbf{X}_{k,k}) \rangle )}{1-\sigma(\langle \beta_W, \Delta_m s_{k,k}(\mathbf{X}_{k,k}) \rangle )} \\
        &=  \sum_{1 \leq k \leq K} \sum_{m \in E_{k,k}} \Delta_m s_{k,k} (\mathbf{X}_{k,k})(\sigma(\langle \beta_W, \Delta_m s_{k,k}(\mathbf{X}_{k,k}) \rangle ) - \mathbbm{1}(m) ) \\
        &=  \sum_{1 \leq k \leq K} \sum_{m \in E_{k,k}} \big( \Delta_m s_{k,k} (\mathbf{X}_{k,k})(\sigma(\langle \beta_W, \Delta_m s_{k,k}(\mathbf{X}_{k,k}) \rangle ) +  s_{k,k}(\Diamond_{m}^{0}\mathbf{X}_{k,k}) - s_{k,k}(\mathbf{X}_{k,k}) \big).
    \end{align*}
    In the same way, differentiating with respect to $\beta_B$ gives
    \begin{align*}
        \sum_{1 \leq k < l \leq K} \sum_{m \in E_{k,l}} \big( \Delta_m s_{k,l} (\mathbf{X}_{k,l})(\sigma(\langle \beta_B, \Delta_m s_{k,l}(\mathbf{X}_{k,l}) \rangle ) +  s_{k,l}(\Diamond_{m}^{0}\mathbf{X}_{k,l}) - s_{k,l}(\mathbf{X}_{k,l}) \big).
    \end{align*}
    Equating the last two terms to zero is the same as solving \eqref{lergm_estimating_eq_detail}. This pseudo MLE has been studied in \cite[Theorem 2]{stewart2026pseudo}, where the authors obtain consistency results. However, their asymptotic conditions differ from ours, detailed in Section \ref{section_convergence_analysis}. In particular, for their Theorem 2, \cite{stewart2026pseudo} assume that the dimension of the parameter space tends to infinity with the number of vertices tending to infinity. In contrast, our Theorem \ref{theorem_standard_error} is a truly non-asymptotic bound which does not have any limiting behaviour requirements. In Remark \ref{remark_standard_error_nonasym} we expand on this behaviour and illustrate the interplay between the number of blocks, the maximal number of vertices per block, and the dimension of the parameter space. What may be of even stronger interest is that our Theorem \ref{theorem_asym_norm_bound}  gives an explicit bound on the distance between the scaled Stein estimators and a multivariate normal normal approximation in which again the interplay between the number of blocks, the maximal number of vertices per block, and the dimension of the parameter space, is explicit. Theorem \ref{theorem_asym_norm_conv} then gives asymptotic normality, under conditions on the asymptotic covariance matrix which are similar to those made in \cite{stewart2026pseudo} for their Theorem 2.
\end{Remark}

\begin{Example}[Bernoulli random graph] \label{remark_bernoulli}
Choosing the statistics $s(\mathbbm{x})$ as the test functions in \eqref{lergm_estimating_eq} is natural at least for a Bernoulli   random graph $\mathrm{Bern}(\alpha)$ with parameter $\alpha \in (0,1)$, having   density $p(\mathbbm{x}) = \alpha^{\mathcal{E}(\mathbbm{x})}(1-\alpha)^{\vert E \vert-\mathcal{E}(\mathbbm{x})}$,  with $\mathcal{E}(\mathbbm{x})$ being the number of edges present. In the parametrization of the LERGM  with $K=1, d_1 = 1, d_2=0$ we recover $\mathrm{Bern}(\alpha)$ by setting $\beta = \log(\alpha/(1-\alpha))$ and $s(\mathbbm{x})=\mathcal{E}(\mathbbm{x})$. Taking the test function $f(\mathbbm{x})=\mathcal{E}(\mathbbm{x})$ and solving $\mathcal{A}_{\beta}^{\star}\mathcal{E}(\mathbf{X})=0$ for $\beta$, where $\mathbf{X} \sim \mathrm{Bern}(p)$,  recovers  the maximum likelihood estimator $\hat{\beta}_n=-\log\big( {\vert E \vert }/{\mathcal{E}(\mathbf{X})} - 1 \big)$   
which corresponds to $ \hat{\alpha}_n=\mathcal{E}(\mathbf{X})/\vert E \vert$.
\end{Example}

In practice and for the convergence analysis in Section \ref{section_convergence_analysis} we will compute the LERGM-Stein estimator as a minimum of a convex function. For this purpose, define the functions $g_W: \mathbb{X} \times \mathbb{R}^{d_1} \rightarrow \mathbb{R}^{d_1} $ and $g_B: \mathbb{X} \times \mathbb{R}^{d_2} \rightarrow \mathbb{R}^{d_2} $ through
\begin{align*}
    g_W(\mathbbm{x},\beta_W) &= \sum_{1 \leq k \leq K} \sum_{m\in E_{k,k}}    \big( \sigma ( \langle \beta_W , \Delta_m s_{k,k}(\mathbbm{x}_{k,k}) \rangle )  \Delta_m s_{k,k}(\mathbbm{x}_{k,k}) +  s_{k,k}(\Diamond_{m}^{0} \mathbbm{x}_{k,k}) - s_{k,k}(\mathbbm{x}_{k,k}) \big), \\
    g_B(\mathbbm{x},\beta_B) &= \sum_{1 \leq k < l \leq K} \sum_{m\in E_{k,l}} \big( \sigma ( \langle \beta_B , \Delta_m s_{k,l}(\mathbbm{x}_{k,l}) \rangle )  \Delta_m s_{k,l}(\mathbbm{x}_{k,l}) + s_{k,l}(\Diamond_{m}^{0} \mathbbm{x}_{k,l}) - s_{k,l}(\mathbbm{x}_{k,l}) \big).
\end{align*}
Then \eqref{lergm_estimating_eq_detail} can be written as
\begin{align*}
   g_W(\mathbf{X},\beta_W) = 0, \qquad g_B(\mathbf{X},\beta_B) = 0.
\end{align*}
The functions $g_W$ and $g_B$ have a primitive function w.r.t.  $\beta_W$ and $\beta_B$, respectively, given by
\begin{align*}
    G_W(\mathbbm{x},\beta_W) &= \sum_{1 \leq k \leq K} \sum_{m\in E_{k,k}} \big( \Sigma ( \langle \beta_W , \Delta_m s_{k,k}(\mathbbm{x}_{k,k}) \rangle )  +  \langle \beta_W, s_{k,k}(\Diamond_{m}^{0}\mathbbm{x}_{k,k}) - s_{k,k}(\mathbbm{x}_{k,k}) \rangle \big), \\
    G_B(\mathbbm{x},\beta_B) &= \sum_{1 \leq k < l \leq K} \sum_{m\in E_{k,l}} \big( \Sigma ( \langle \beta_B , \Delta_m s_{k,l}(\mathbbm{x}_{k,l}) \rangle ) + \langle \beta_B, s_{k,l}(\Diamond_{m}^{0}\mathbbm{x}_{k,l}) - s_{k,l}(\mathbbm{x}_{k,l}) \rangle \big),
\end{align*}
where $\Sigma(t)=\log(1+e^t)$. Moreover, the Hessians of $G_W(\mathbbm{x}, \beta_W)$ and $G_B(\mathbbm{x}, \beta_B)$ with respect to $\beta_W$ and $\beta_B$ are, respectively, 
\begin{align} \label{target_function_hessian}
\begin{split}
    \mathcal{G}_W(\mathbbm{x},\beta_W) &= \sum_{1 \leq k \leq K} \sum_{m\in E_{k,k}} \sigma' ( \langle \beta_W , \Delta_m s_{k,k}(\mathbbm{x}_{k,k}) \rangle )\Delta_m s_{k,k}(\mathbbm{x}_{k,k})\Delta_m s_{k,k}(\mathbbm{x}_{k,k})^{\top} , \\ 
    \mathcal{G}_B(\mathbbm{x},\beta_B) &= \sum_{1 \leq k < l \leq K} \sum_{m\in E_{k,l}} \sigma' ( \langle \beta_B , \Delta_m s_{k,l}(\mathbbm{x}_{k,l}) \rangle ) \Delta_ms_{k,l}(\mathbbm{x}_{k,l})\Delta_m s_{k,l}(\mathbbm{x}_{k,l})^{\top}; 
\end{split}
\end{align}
both Hessians are positive semi-definite. 

\begin{Definition}[LERGM-Stein estimator]\label{stein_estimator_def_noncompact}
    For an observed network $\mathbf{X} \sim \mathrm{LERGM}(\beta^\star)$ we define the Stein estimator $\hat{\beta}= (\hat{\beta}_W, {\hat{\beta}}_B)$ 
    \begin{align*}
        \hat{\beta}_W = \argmin_{\beta_W \in \mathbb{R}^{d_1}} G_W(\mathbf{X},\beta_W), \qquad \hat{\beta}_B = \argmin_{\beta_B \in \mathbb{R}^{d_2}} G_B(\mathbf{X},\beta_B).
    \end{align*}
\end{Definition}
In the remainder of this section we examine under what conditions on a given observation $\mathbbm{x} \in \mathbb{X}(\mathbb{A})$ the Stein estimator as introduced in Definition \ref{stein_estimator_def_noncompact} exists and is unique. In Section \ref{section_convergence_analysis} we study consistency and asymptotic normality of a slightly modified Stein estimator (compare Definition \ref{definition_stein_est_conv_analysis}). To be more precise, we assume for the parameter space not to be the canonical parameter space $\mathbbm{R}^{d_1+ d_2}$ (in the sense of \cite{sundberg2019statistical}), but rather a subspace. 

For existence and uniqueness of the Stein estimator, we can draw on the exponential family property of the LERGM, see \cite{stewart2026rates} for similar arguments. As in \cite{stewart2026rates}, in what follows we introduce the following assumption on the model. 
\begin{Assumption} \label{assumption_model}
   \begin{itemize} 
        \item[(i)] The dimensions $d_1$ and $d_2$ are such that $d_1 \leq K M(M-1)/{2}$ and $d_2 \leq \frac{K(K-1)}{2} M^2$,  where $M$ is as in \eqref{maximum_nb_vertices} the number of vertices in the largest block.  
        \item[(ii)]  For all $\mathbbm{x}_{k,l} \in \mathbb{X}_{k,l}$ and $m \in E_{k,l}$, $1 \leq k \leq l\leq K$, we have that 
        \begin{align*}
            \Delta_m s_{k,l}(\mathbbm{x}_{k,l}) \geq 0 .   
        \end{align*}
        \item[(iii)]  For all $\mathbbm{x}_{k,l} \in \mathbb{X}_{k,l}$ and $m \in E_{k,l}$, $1 \leq k \leq l\leq K$ we have that 
        \begin{align*}  
            s_{k,l}(\mathbbm{x}_{k,l}) \geq s_{k,l}(\Diamond_{m}^{0}\mathbbm{x}_{k,l})  \text{ and  } s_{k,l}(\mathbbm{x}_{k,l}) \leq s_{k,l}(\Diamond_{m}^{1}\mathbbm{x}_{k,l}).
        \end{align*}
    \end{itemize}
    Here, the inequalities are understood  component-wise.
\end{Assumption}

Next we introduce a set of assumptions on the given observation $\mathbbm{x} \in \mathbb{X}(\mathbb{A})$.

\begin{Assumption}\label{assumption_observation}
    Let $\mathbbm{x} \in \mathbb{X}(\mathbb{A})$.
    \begin{itemize}
        \item[(i)] For at least one $1 \leq k \leq K$ and at least one $m \in E_{k,k}$, the matrix $\Delta_ms_{k,k}(\mathbbm{x}_{k,k})\Delta_m s_{k,k}(\mathbbm{x}_{k,k})^{\top}$ is strictly positive definite.
        \item[(ii)] For at least one  $1 \leq k < l \leq K$ and at least one $m \in E_{k,l}$ the matrix $\Delta_ms_{k,l}(\mathbbm{x}_{k,l})\Delta_m s_{k,l}(\mathbbm{x}_{k,l})^{\top}$ is strictly positive definite.
        \item[(iii)]  There is a $k \in 1, \ldots, K$  such that there are  $m, m' \in E_{k,k}$ such that 
        \begin{align*} 
            s_{k,k}(\mathbbm{x}_{k,k}) -s_{k,k}(\Diamond_{m}^{0}\mathbbm{x}_{k,k}) > 0 \text{ and } s_{k,k}(\Diamond_{m'}^{1}\mathbbm{x}_{k,k}) -s_{k,k}(\mathbbm{x}_{k,k}) > 0.     
        \end{align*}  
        \item[(iv)] There are $ 1 \leq k < l  \leq K $ such that there are   $m,  m' \in E_{k,l}$  such that 
        \begin{align*} 
            s_{k,l}(\mathbbm{x}_{k,l}) -s_{k,l}(\Diamond_{m}^{0}\mathbbm{x}_{k,l}) > 0 \text{ and } s_{k,l}(\Diamond_{m'}^{1}\mathbbm{x}_{k,l}) -s_{k,l}(\mathbbm{x}_{k,l}) > 0 .
        \end{align*}
    \end{itemize}
\end{Assumption}

\begin{Remark}
    The second part of (iii) in Assumption \ref{assumption_observation} is not redundant. To see this, if the first inequality in (iii) is satisfied, then there is a $k \in 1, \ldots, K$ and an $m\in E_{k,k}$ such that for at least one within-block subgraph $\mathbbm{x_{k,k}} \in \mathbb{X}_{k,k} $ we have $ s_{k,k}(\Diamond_{m}^{1}\mathbbm{x}_{k,k}) > s_{k,k}(\Diamond_{m}^{0}\mathbbm{x}_{k,k})$ and hence, for the within-block subgraph $\mathbbm{x}'_{k,k} = \Diamond_{m}^{0}\mathbbm{x}_{k,k}$ we have that  $ s_{k,k}(\Diamond_{m}^{1}\mathbbm{x}'_{k,k}) > s_{k,k}(\mathbbm{x}'_{k,k})$. However, $\mathbbm{x} \ne \mathbbm{x}'$ in general.  A similar assertion holds for the between-block subgraphs assumption (iv).
\end{Remark}

\begin{Remark}
    We compare these assumptions to the set of assumptions in \cite{stewart2026rates}, where maximum likelihood estimation for LERGMs is studied. In \cite{stewart2026rates}, the assumptions heavily involve the expectation of the Hessian of the log likelihood function.
    In contrast, our set of assumptions is easily to verify just based on the sufficient statistics; no expectations are required. Due to the intractable normalising constants, expectations of functions of LERGMs are usually not available. 
\end{Remark}

\begin{Proposition} \label{prop_estimators_uniqueness_existence}
    Let the LERGM satisfy Assumption \ref{assumption_model} and let $\mathbbm{x} \in \mathbb{X}(\mathbb{A})$ satisfy Assumption \ref{assumption_observation}. Then $\hat{\beta} = (\hat{\beta}_W, \hat{\beta}_B)$ as defined in Definition \ref{stein_estimator_def_noncompact}, i.e. 
    \begin{align*}
        \hat{\beta}_W = \argmin_{\beta_W \in \mathbb{R}^{d_1}} G_W(\mathbbm{x},\beta_W), \qquad \hat{\beta}_B = \argmin_{\beta_B \in \mathbb{R}^{d_2}} G_B(\mathbbm{x},\beta_B)
    \end{align*}
    exists and is unique. 
\end{Proposition} 

\begin{proof}
    Under (i) and (ii) in Assumption \ref{assumption_observation} the functions $G_W$ and $G_B$ are strictly convex. Hence the functions $g_W$ and $g_B$ admit at most one zero with respect to $\beta_W, \beta_B$ and these zeros, if they exist, are  the unique global minima of $G_W$, $G_B$. Now, for $x_i \in \mathbb{R}^d$, $i=1, \ldots, n $ where $n\geq d $, such that all $x_i$ have non-negative components and the components of the vector $\tilde{x}=\sum_{i=1}^n x_i$ are strictly positive, the function
    \begin{align*}
        h(\beta)=\sum_{i=1}^n \sigma(\langle \beta, x_i \rangle ) x_i, \qquad \beta \in \mathbb{R}^d
    \end{align*}
    is continuous with  range $h(\mathbb{R}^d)=(0, \tilde{x}_1) \times \ldots \times (0,\tilde{x}_d)$. Thus, for  $y \in \mathbb{R}^d$ with $0 < y_i < \tilde{x}_i $, $i=1,\ldots,d$, the equation $h(\beta)=y$  has a solution in $\mathbb{R}^d$. \par
    
    Now, for the within-block assertion, take $n = \sum_{1 \leq k\leq K} \vert E_{k,k}\vert$ as well as 
    $x_i = \Delta_m s_{k,k}(\mathbbm{{x}}_{k,k})$, for $m \in E_{k,k}$, $1\leq k \leq K$ whereby $i$ corresponds to the possible edge $m$. Moreover, take $y= \sum_{1\leq k \leq K} \sum_{m \in E_{k,k}} \big( s_{k,k}(\mathbbm{{x}}_{k,k}) -s_{k,k}(\Diamond_{m}^{0}\mathbbm{x}_{k,k}) \big) $ and note that
    \begin{align*}
         y = x - \sum_{1\leq k \leq K} \sum_{m \in E_{k,k}} \big(s_{k,k}( \Diamond_m^1 ( x_{k,k}) - s_{k,k} (x_{k,k}) \big) .
    \end{align*} \par
    The argument for the between-block part is analogous, taking $x_i=\Delta_ms_{k,l}(\mathbbm{x}_{k,l})$, $m \in E_{k,l}$, $1\leq k<l \leq K$ and setting $y= \sum_{1\leq k<l \leq K} \sum_{m \in E_{k,k}} \big( s_{k,l}(\mathbbm{x}_{k,l}) -s_{k,l}(\Diamond_{m}^{0}\mathbbm{x}_{k,l}) \big)$. \par
    Then Assumptions \ref{assumption_model} and \ref{assumption_observation} correspond to the assumptions made on the $x_i$ and $y$ above and it follows that the equations $g_W(\mathbbm{x},\beta_W)=0$ and $g_B(\mathbbm{x},\beta_B)=0$ have a unique solutions with respect to $\beta_W$ and $\beta_B$.
\end{proof}

Proposition \ref{prop_estimators_uniqueness_existence} is a purely analytical statement. It is reasonable to ask when a realisation of a LERGM satisfies Assumption \ref{assumption_observation}.

\begin{Corollary}\label{corollary_random_existence_uniqueness}
    Let LERGM($\beta$) satisfy Assumption \ref{assumption_model}. 
    Let $\mathbf{X}\sim LERGM(\beta)$ be a random element  which satisfies the following conditions with probability $1 - \delta$.
    \begin{itemize}
        \item[(i)] For at least one $1 \leq k \leq K$ and at least one $m \in E_{k,k}$, the matrix $\Delta_ms_{k,k}(\mathbf{X}_{k,k})\Delta_m s_{k,k}(\mathbf{X}_{k,k})^{\top}$ is strictly positive definite.
        \item[(ii)] For at least one  $1 \leq k < l \leq K$ and at least one $m \in E_{k,l}$ the matrix $\Delta_ms_{k,l}(\mathbf{X}_{k,l})\Delta_m s_{k,l}(\mathbf{X}_{k,l})^{\top}$ is strictly positive definite.
        \item[(iii)]  There is a $k \in 1, \ldots, K$  such that there are  $m, m' \in E_{k,k}$ such that 
        \begin{align*} 
            s_{k,k}(\mathbf{X}_{k,k}) -s_{k,k}(\Diamond_{m}^{0}\mathbf{X}_{k,k}) > 0 \text{ and } s_{k,k}(\Diamond_{m'}^{1}\mathbf{X}_{k,k}) -s_{k,k}(\mathbf{X}_{k,k}) > 0.     
        \end{align*}  
        \item[(iv)] There are $ 1 \leq k < l  \leq K $ such that there are   $m,  m' \in E_{k,l}$  such that 
        \begin{align*} 
            s_{k,l}(\mathbf{X}_{k,l}) -s_{k,l}(\Diamond_{m}^{0}\mathbf{X}_{k,l}) > 0 \text{ and } s_{k,l}(\Diamond_{m'}^{1}\mathbf{X}_{k,l}) -s_{k,l}(\mathbf{X}_{k,l}) > 0 .
        \end{align*}
    \end{itemize}
    Then with probability at least $1 - \delta$ the LERGM Stein estimator $\hat{\beta}$ as defined in Definition \ref{stein_estimator_def_noncompact} exists and is unique.
\end{Corollary}
The proof of the corollary uses the same steps as the one for Proposition \ref{prop_estimators_uniqueness_existence}; we state it here as a corollary to disentangle the randomness from the analytical argument.

\begin{Example}[Bernoulli random graph]
    In a Bernoulli random graph $\mathrm{Bern}(\alpha)$ with $\alpha \in (0,1)$ as in Example \ref{remark_bernoulli}, for $s(\mathbbm{x})=\mathcal{E}(\mathbbm{x})$  we have $K=1$, and $\Delta_m s(\mathbbm{x}) =1$, as adding an edge increases the statistic by one. It is thus easy to see that   $\mathrm{Bern}(\alpha)$ satisfies Assumption \ref{assumption_model}. Moreover any $\mathbbm{x}$ that is not the full or the empty graph satisfies in Assumption \ref{assumption_observation}. We note here that for $\alpha \notin \{0,1\}$, the probability of obtaining the empty or the complete graph is $\alpha^{\binom{n}{2}} + (1-\alpha)^{\binom{n}{2}}$; with probability $1 - \alpha^{\binom{n}{2}} - (1-\alpha)^{\binom{n}{2}} $, from Corollary \ref{corollary_random_existence_uniqueness} it follows that the Stein estimator exists and is unique. Similarly, for general $K$, if the within-subgraphs are standard Bernoulli random graphs $\mathrm{Bern}(\alpha_W)$ and the between-subgraphs are bipartite Bernoulli random graphs $\mathrm{Bern}(\alpha_B)$ with $\alpha_W, \alpha_B \in (0,1)$, then Assumption \ref{assumption_observation} is satisfied as long as at least one within-block subgraph and at least one between-block subgraph is not empty or full.
\end{Example}

It is straightforward to see that one can replace (ii)--(iii) in Assumption \ref{assumption_model} and (iii)--(iv) in Assumption \ref{assumption_observation} by the following assumptions. 
\begin{Assumption}
\label{assumption_model_alt}
    \begin{itemize}
        \item[(ii)']  For all $\mathbbm{x}_{k,l} \in \mathbb{X}_{k,l}$ and $m \in E_{k,l}$, $1 \leq k \leq l\leq K$, we have that 
        \begin{align*}
            \Delta_m s_{k,l}(\mathbbm{x}_{k,l}) \leq 0 .   
        \end{align*}
        \item[(iii)']  For all $\mathbbm{x}_{k,l} \in \mathbb{X}_{k,l}$ and $m \in E_{k,l}$, $1 \leq k \leq l\leq K$ we have that 
        \begin{align*}  
            s_{k,l}(\mathbbm{x}_{k,l}) \leq s_{k,l}(\Diamond_{m}^{0}\mathbbm{x}_{k,l})  \text{ and  } s_{k,l}(\mathbbm{x}_{k,l}) \geq s_{k,l}(\Diamond_{m}^{1}\mathbbm{x}_{k,l}).
        \end{align*}
    \end{itemize}
    Here again  the inequalities are understood to hold component-wise.
\end{Assumption} 

\begin{Assumption}
\label{assumption_observation_alt}
    \begin{itemize}
        \item[(iii)']  There is a $k \in 1, \ldots, K$  such that there are  $m, m' \in E_{k,k}$ such that 
        \begin{align*} 
            s_{k,k}(\mathbbm{x}_{k,k}) -s_{k,k}(\Diamond_{m}^{0}\mathbbm{x}_{k,k}) < 0 \text{ and } s_{k,k}(\Diamond_{m'}^{1}\mathbbm{x}_{k,k}) -s_{k,k}(\mathbbm{x}_{k,k}) < 0.     
        \end{align*}  
        \item[(iv)'] There are $ 1 \leq k < l  \leq K $ such that there are   $m,  m' \in E_{k,l}$  such that 
        \begin{align*} 
            s_{k,l}(\mathbbm{x}_{k,l}) -s_{k,l}(\Diamond_{m}^{0}\mathbbm{x}_{k,l}) < 0 \text{ and } s_{k,l}(\Diamond_{m'}^{1}\mathbbm{x}_{k,l}) -s_{k,l}(\mathbbm{x}_{k,l}) < 0 .
        \end{align*}
    \end{itemize}
    Here again  the inequalities are understood to hold component-wise.
\end{Assumption}

\begin{Example} \label{example_degree_seq}
    For a within-block subgraph $\mathbbm{x}_{k,k}$ (and equivalently for a between-block subgraph), let $\mathcal{H}_i(\cdot)$ be the degree sequence, i.e.\,the number of vertices in the subgraph with degree $i$, where $i$ runs from $0$ to $\vert A_k\vert-1 $, $1 \leq k \leq K$. In \cite{mukherjee2020degeneracy} statistics of the form
    \begin{align} \label{mukherform}
        s(\mathbbm{x}_{k,k})= \sum_{i=0}^{\vert A_k\vert-1} o(i) \mathcal{H}_i(\mathbbm{x}_{k,k}) 
    \end{align}
    with $o(i) \geq 0$ for all $i=0,\ldots,\vert A_k\vert-1  $ are considered. The number of edges $\mathcal{E}(\mathbbm{x}_{k,k})$ can be recovered with the choice $o(i)=\frac{i}{2}$, giving a Bernoulli random graph. 
In \cite{mukherjee2020degeneracy}, among other statistics,  the authors use the strictly decreasing functions 
$o(i)= e^{-\alpha i} $ and $ o(i)= \frac{1}{(i+a)_b}$, where  $\alpha>0$ and $a,b$ are positive integers, and $(i)_b=i(i+1)\ldots (i+b-1) $.
\end{Example}

Taking inspiration from the approach in \cite{mukherjee2020degeneracy} we show the following result. 
\begin{Lemma} 
    A LERGM  with $d_1= 1$ and $ d_2 = 1$  with  statistics of the type \eqref{mukherform} for $o(i)$ a strictly decreasing function in $i$ satisfies Assumption \ref{assumption_model}(i)--(ii) and Assumption \ref{assumption_model_alt}(ii)'--(iii)'. Moreover, if the graph $\mathbbm{x} \in \mathbb{X}(\mathbb{A})$ is such that the within-block graphs are neither all full or empty, and such that the between-block graphs are also neither all full or empty, it also satisfies Assumption \ref{assumption_observation_alt}(iii)'--(iv)'.
\end{Lemma}

\begin{proof}
For simplicity we focus on a graph with just one block $A$; the results for graphs with more than one block are analogous. From \eqref{mukherform}, $\Delta_m s(\mathbbm{x})=
\sum_{i=0}^{\vert A \vert -1} o(i) \Delta_m \mathcal{H}_i(\mathbbm{x})$, and if $m$ is the possible edge between the vertices $u$ and $v$ then, with $\deg (w, \mathbbm{x}) $ the degree of $w$ in $\mathbbm{x}$, 
\begin{align*}
    \Delta_m \mathcal{H}_i(\mathbbm{x})
     =  & \sum_{w \in A}   {\mathbbm{1}}\{ \deg (w, \Diamond_{m}^{1} {\mathbbm{x}})  = i\}
     - {\mathbbm{1}}\{ \deg (w, \Diamond_{m}^{0} {\mathbbm{x}})  = i\}   \\
     = &  {\mathbbm{1}}\{ \deg (u, \Diamond_{m}^{1} {\mathbbm{x}})  = i\}
     - {\mathbbm{1}}\{ \deg (u, \Diamond_{m}^{0} {\mathbbm{x}})  = i\} \\
     & + {\mathbbm{1}}\{ \deg (v, \Diamond_{m}^{1} {\mathbbm{x}})  = i\} - {\mathbbm{1}}\{ \deg (v, \Diamond_{m}^{0} {\mathbbm{x}})  = i\}.
\end{align*}
This implies that
\begin{align*}
    \Delta_m s(\mathbbm{x}) = o( \deg (u, \Diamond_{m}^{1} {\mathbbm{x}}) ) +  o( \deg (v, \Diamond_{m}^{1} {\mathbbm{x}}) ) -  o( \deg (u, \Diamond_{m}^{0} {\mathbbm{x}}) ) -  o( \deg (v, \Diamond_{m}^{0} {\mathbbm{x}}) ) < 0
\end{align*}
since $o(i)$ is strictly decreasing in $i$ and $\deg (u, \Diamond_{m}^{1} {\mathbbm{x}})> \deg (u, \Diamond_{m}^{0} {\mathbbm{x}})$ as well as $\deg (v, \Diamond_{m}^{1} {\mathbbm{x}})> \deg (v, \Diamond_{m}^{0} {\mathbbm{x}})$. Therefore, Assumptions \ref{assumption_observation}(i)--(ii) and Assumption \ref{assumption_model_alt}(ii)' are satisfied. In a similar way we have that
\begin{align} \label{proof_degreeseq_existence_noedge}
    s(\mathbbm{x}) - s( \Diamond_{m}^{0} \mathbbm{x})  = o( \deg (u,  \mathbbm{x}) ) +  o( \deg (v, \mathbbm{x}) ) -  o( \deg (u, \Diamond_{m}^{0} \mathbbm{x}) ) -  o( \deg (v, \Diamond_{m}^{0} {\mathbbm{x}}) ) \leq 0
\end{align}
as well as
\begin{align} \label{proof_degreeseq_existence_withedge}
    s(\Diamond_{m}^{1} \mathbbm{x}) - s( \mathbbm{x})  = o( \deg (u, (\Diamond_{m}^{1} \mathbbm{x}) ) +  o( \deg (v, (\Diamond_{m}^{1} \mathbbm{x}) ) -  o( \deg (u,\mathbbm{x}) ) -  o( \deg (v, \mathbbm{x}) ) \leq 0
\end{align}
and it follows that Assumption \ref{assumption_model_alt}(iii)' is satisfied. Moreover, if the graph $\mathbbm{x}$ is not empty, we let $m$ be an edge between vertices $u$ and $v$ which is present. Then $\deg (u, \mathbbm{x}) > \deg (u, \Diamond_{m}^{0} \mathbbm{x}) $ and $\deg (v,  \mathbbm{x}) > \deg (v, \Diamond_{m}^{0} \mathbbm{x}) $. If the graph $\mathbbm{x}$ is not full, we let $m$ be an edge between vertices $u$ and $v$ which is not present. Then $\deg (u, \Diamond_{m}^{1} \mathbbm{x}) > \deg (u, \mathbbm{x}) $ and $\deg (v, \Diamond_{m}^{1} \mathbbm{x}) > \deg (v, \mathbbm{x}) $. It now follows from \eqref{proof_degreeseq_existence_noedge} and \eqref{proof_degreeseq_existence_withedge} that Assumptions \ref{assumption_observation_alt}(iii)'--(iv)' are satisfied.
\end{proof}

\begin{Example}
A particular model with statistics of the form \eqref{mukherform} is the  Edge Geometrically-weighted-degree model which includes the edge statistic $\mathcal{E}(\cdot)$ for both the within-block subgraphs and the between-block subgraphs,  the geometrically-weighted degree sequences $s_{Gwd}(\mathbbm{x}_{k,k})$ for the within-block subgraphs,  and two geometrically-weighted degree sequences, $s_{Gwd,1}(\mathbbm{x}_{k,l})$ and $s_{Gwd,2}(\mathbbm{x}_{k,l})$ for the between-block subgraphs. For the within-block subgraphs, we have
\begin{align*}
    s_{Gwd}(\mathbbm{x}_{k,k})= \sum_{i=0}^{\vert A_k \vert-1}o(i)\mathcal{H}_i(\mathbbm{x}_{k,k}), \qquad k=1,\ldots K
\end{align*}
with $o(i) = e^{-\alpha i}$, $\alpha>0$. For the between-block subgraphs, we think of such a subgraph as a bipartite graph with vertices from two blocks $A_k$ and $A_l$; this gives two degree sequences,  $\mathcal{H}_i^{(1)}(\mathbbm{x}_{k,l})$ for the degree distribution of the vertices in block $k$ in the subgraph $\mathbbm{x}_{k,l}$, 
and $\mathcal{H}_i^{(2)}(\mathbbm{x}_{k,l})$ for the degree distribution of the vertices in block $l$,  in the subgraph $\mathbbm{x}_{k,l}$. The corresponding statistics are  given by
\begin{align*}
    s_{Gwd,j}(\mathbbm{x}_{k,l})= \sum_{i=0}^{\vert A_{\bullet} \vert-1}o(i)\mathcal{H}_i^{(j)}(\mathbbm{x}_{k,l}), \qquad 1 \leq k<l \leq  K, \quad \quad {j=1,2},
\end{align*}
where $\bullet=l$ if $j=1$ and $\bullet=k$ if $j=2$. As a concrete example we take $\alpha=1$; we denote by  $\beta_W=(\beta_W^{(1)}, \beta_W^{(2)})$ the within-block parameters,  and by $\beta_B=(\beta_B^{(1)}, \beta_B^{(2)}, \beta_B^{(3)})$  the between-block parameters. 
For this model, Corollary 1 in \cite{schweinberger2020concentration} gives a concentration result for the maximum likelihood estimator, under the assumption that each block is of size at least 4.
\end{Example}

It is easy to see that there are examples in which Assumption \ref{assumption_model} holds, but for a given realisation $\mathbf{X}$ it is unlikely that the conditions in Corollary \ref{corollary_random_existence_uniqueness} hold. Consider a LERGM which includes the number of isolated vertices as statistic, with a negative parameter, so that isolated vertices are discouraged in the model. Then realisations in which the number of isolated vertices changes by addition or removal of an edge can be constructed. However  if the parameter is small enough, then many realisations will not have any isolated vertices, and they would then have a very small probability $\delta$ that the addition, or the deletion, of an edge leads to a strictly positive change in the number of isolated vertices.

\section{Convergence analysis} \label{section_convergence_analysis}
In order to be rigorous about the dependency structure of the constants and to derive asymptotic results from our error bounds, we work in the following setting: We have a sequence of random graphs $\mathbf{X}^{(n)} \sim \mathrm{LERGM}(\beta^{\star}) $, $n \in \mathbb{N}$. The graphs $\mathbf{X}^{(n)}$ all follow a LERGM with the same parameter $\beta^{\star}$,  but we allow all other quantities involved in the model to depend on $n$,  including the vertex set $A^{(n)}$,  the partition,  and the statistic $s^{(n)}$. As in \eqref{maximum_nb_vertices} we denote the number of vertices in a largest block by $M_n$ 
and write $K_n$  for the number of blocks (using subscripts instead of superscripts for these two quantities for ease of notation). Note that  also  the domain $\mathbb{X}^{(n)}$ may depend on $n$. More formally, with $\beta^*$ denoting the true parameter, we assume that the density of $\mathbf{X}^{(n)}$ is given by the function $p^{(n)}: \mathbb{X}^{(n)} \rightarrow \mathbb{R}$ defined by
\begin{align*}
    p^{(n)}(\mathbbm{x}^{(n)}) \propto  \exp \bigg(  \sum_{1\leq k \leq K_n} \langle \beta_W^{\star}, s_{k,k}^{(n)}(\mathbbm{x}_{k,k}^{(n)}) \rangle  + \sum_{1\leq k <l \leq K_n} \langle \beta_B^{\star}, s_{k,l}^{(n)}(\mathbbm{x}_{k,l}^{(n)}) \rangle \bigg)
\end{align*}
with the statistics $s_{k,l}^{(n)}:\mathbb{X}_{k,l}^{(n)} \rightarrow \mathbb{R}^{d_1}$ if $k=l$ and $s_{k,l}^{(n)}:\mathbb{X}_{k,l}^{(n)} \rightarrow \mathbb{R}^{d_2}$ if $k \neq l$,  and
\begin{align*}
    s_W^{(n)}(\mathbbm{x}^{(n)}) = \sum_{1\leq k \leq K_n} s_{k,k}^{(n)}(\mathbbm{x}_{k,k}^{(n)}) \qquad \text{and} \qquad s_B^{(n)}(\mathbbm{x}^{(n)}) = \sum_{1\leq k <l \leq K_n} s_{k,l}^{(n)}(\mathbbm{x}_{k,l}^{(n)})
\end{align*}
as well as $s^{(n)}(\mathbbm{x}^{(n)}) = (s_W^{(n)}(\mathbbm{x}^{(n)}),s_B^{(n)}(\mathbbm{x}^{(n)})): \mathbb{X}^{(n)} \rightarrow \mathbb{R}^{d_1+ d_2} $. In particular  the functions $g_W^{(n)}$, $g_B^{(n)}$, $G_W^{(n)}$, $G_W^{(n)}$, $\mathcal{G}_W^{(n)}$, $\mathcal{G}_W^{(n)}$ now depend on $n$. 
\begin{Remark}
    Note that, in our setting, estimation is performed based on a single observation $\mathbf{X}^{(n)}$ and does not involve any other elements from the sequence $\{ \mathbf{X}^{(n)}, n \in \mathbb{N} \}$. The assumption that such a sequence exists is made purely for theoretical reasons, in order to have a well-defined limit which depends on $n$. The parameter $n$ is merely an index of the sequence; one example would be to choose a sequence in which $n$ is the number of vertices in the graph, but this is not automaticaly implied. Moreover, statistical inference on the parameter $\beta$ is not affected by the dependency structure between the random graphs in $\{ \mathbf{X}^{(n)}, n \in \mathbb{N} \}$. Our theoretical results do not assume any particular dependency structure between the graphs in the sequence. For example one could think of $\mathbf{X}^{(n)}, n \in \mathbb{N}$ as independently drawn random graphs. 
    
    Let us illustrate this point with the Bernoulli random graph $\mathrm{Bern}(\alpha)$. We consider two different ways of defining a sequence $\mathbf{X}^{(n)}, n \geq 2$ that both cover the scenario $\mathbf{X}^{(n)}\sim \mathrm{Bern}(\alpha)$ with $n$ vertices:
    \begin{itemize}
        \item Start with $\mathbf{X}^{(2)}\sim \mathrm{Bern}(\alpha)$ and add vertices and the corresponding edges by drawing independent Bernoulli random variables. Thus, $\mathbf{X}^{(n)} \subset \mathbf{X}^{(n+1)}$ for each $n \geq 2$, and the elements in  $\{ \mathbf{X}^{(n)}, n \in \mathbb{N} \}$ are dependent.
        \item For each $n \geq 2$, draw a new Bernoulli random graph with $n$ edges and parameter $\alpha$. Thus,  the sequence  $\{ \mathbf{X}^{(n)}, n \in \mathbb{N} \}$ consists of independent random graphs.
    \end{itemize}
    We emphasise that the two definitions above are equivalent when it comes to our statistical inference on the parameter $\alpha$, since our  estimator is based on a single observation $\mathbf{X}^{(n)}$;  no other random graphs of the sequence is observed for the estimation. We find it most convenient to think of $\{ \mathbf{X}^{(n)}, n \in \mathbb{N} \}$ as independent random graphs. However, for the results we develop in this section, we do not need to make any assumptions on the dependency structure of the elements in $\{ \mathbf{X}^{(n)}, n \in \mathbb{N} \}$. 
\end{Remark}

As already mention in Section \ref{section_steins_metod_of_moments} we slightly modify the LERGM and the definition of the Stein estimator for the convergence analysis in this section: We assume that the parameter space is compact and therefore replace $\mathbb{R}^{d_1}$ and $\mathbb{R}^{d_2}$ in Definition \ref{stein_estimator_def_noncompact} by corresponding compact subsets $B_W$ and $B_B$. Hence we introduce the following model assumption. \par

Let $R_W, R_B > 0$ independent of $n$ and define $B_W = \{\beta_W \in \mathbb{R}^{d_1} \, \vert \, \Vert \beta_W \Vert \leq R_W \}$ as well as $B_B = \{\beta_B \in \mathbb{R}^{d_2} \, \vert \, \Vert \beta_B \Vert \leq R_B \}$.

\begin{Assumption} \label{ass_compact_par}
    We assume that $\beta_W^{\star}$ and $\beta_B^{\star}$ lie in the interior of $B_W$ and $B_B$. 
\end{Assumption}

Here, using the Euclidean norm, we note the implicit dependence that $R_W$ and $R_B$ are of the order $\sqrt{d_1}$ and $\sqrt{d_2},$ respectively. Assumption \ref{ass_compact_par} is used to bound the second derivatives $\mathcal{G}_W^{(n)}$ and $\mathcal{G}_B^{(n)}$ from \eqref{target_function_hessian} away from $0$. In \cite{stewart2026rates}  this restriction is foregone at the cost of of introducing an assumption on the asymptotic relation between the number of vertices and the number of blocks. A general discussion of this compactness assumption and why it is often made can be found for example in \cite{newey1994large}. \par

In view of Assumption \ref{ass_compact_par} we define a LERGM Stein-estimator in this section as follows.
\begin{Definition} \label{definition_stein_est_conv_analysis}
    For $\mathbf{X}^{(n)}\sim \mathrm{LERGM}(\beta^{\star})$ a Stein estimator $\hat{\beta}^{(n)}= \big( \hat{\beta}_W^{(n)}, \hat{\beta}_B^{(n)} \big) $ is defined as
\begin{align*}
    \hat{\beta}_W^{(n)} = \argmin_{\beta_W \in B_W} G_W^{(n)}(\mathbf{X}^{(n)},\beta_W),  \qquad \hat{\beta}_B^{(n)} =  \argmin_{\beta_B \in B_B} G_B^{(n)}(\mathbf{X}^{(n)},\beta_B).
\end{align*}
If the minima above are not unique we let $\hat{\beta}_W^{(n)}$ and $\hat{\beta}_B^{(n)}$ equal one of the minimising arguments.
\end{Definition}
In the definition above, one is allowed to pick the minimiser by any deterministic algorithm if not unique. The results from Theorems \ref{section_standad_errors}, \ref{theorem_asym_norm_bound} and \ref{theorem_asym_norm_conv} are independent of this algorithm. For example: Let $B_W' \subset B_W$, $B_B' \subset B_B$ the set of minimising arguments with respect to the argmins in Definition \ref{definition_stein_est_conv_analysis}. Then pick as $\hat{\beta}_W^{(n)}$ and $\hat{\beta}_B^{(n)}$ the elements in $B_W'$ and $B_B'$ with the smallest norm, respectively. If still not unique, choose the elements with smallest first components and continue with the other components until only one element in $B_W'$ and $B_B'$ is left. This guarantees that the Stein estimator as defined in Definition \ref{definition_stein_est_conv_analysis} exists and is unique for any observed network $\mathbf{X}^{(n)}$ since a minimum of a function over a compact domain always exists. The Stein estimator $\hat{\beta}^{(n)}= \big( \hat{\beta}_W^{(n)}, \hat{\beta}_B^{(n)} \big) $ as defined in Definition \ref{definition_stein_est_conv_analysis} therefore exists and is unique with probability $1$.  \par

The next assumption guarantees that changes in the summary statistics occurring from adding or deleting one edge are not too large.  We recall from \eqref{maximum_nb_vertices} that $M_n$ is the number of vertices in the largest block.
\begin{Assumption}  \label{ass_growth_statis}
There exist constants $L_W,L_B,C_W,C_B \geq 0$ independent of $n$ such that
\begin{align*}
    \Vert \Delta_m s_{k,k}^{(n)}(\mathbbm{x}_{k,k}^{(n)}) \Vert \leq L_W   M_n^{C_W} 
\end{align*}
for all $\mathbbm{x}_{k,k}^{(n)} \in \mathbb{X}_{k,k}^{(n)}$, $m \in E_{k,k}^{(n)}$, $1 \leq k \leq K_n$ and $n \in \mathbb{N}$ as well as
\begin{align*}  
    \Vert \Delta_m s_{k,l}^{(n)}(\mathbbm{x}_{k,l}^{(n)}) \Vert \leq L_B  M_n^{C_B} 
\end{align*}
for all $\mathbbm{x}_{k,l}^{(n)} \in \mathbb{X}_{k,l}^{(n)}$, $m \in E_{k,l}^{(n)}$, $1 \leq k <l \leq K_n$ and $n \in \mathbb{N}$.
\end{Assumption}
Note that Assumption \ref{ass_growth_statis} entails that
\begin{align*}
    \Vert s_{k,k}^{(n)}(\mathbbm{x}_{k,k}^{(n)}) -s_{k,k}^{(n)}(\Diamond_{m}^{0} \mathbbm{x}_{k,k}^{(n)}) \Vert \leq L_W   M_n^{C_W} 
\end{align*}
for all $\mathbbm{x}_{k,k}^{(n)} \in \mathbb{X}_{k,k}^{(n)}$, $m \in E_{k,k}^{(n)}$, $1 \leq k \leq K_n$ and $n \in \mathbb{N}$ as well as
\begin{align*}  
    \Vert s_{k,l}^{(n)}(\mathbbm{x}_{k,l}^{(n)}) -s_{k,l}^{(n)}(\Diamond_{m}^{0} \mathbbm{x}_{k,l}^{(n)}) \Vert \leq L_B  M_n^{C_B} 
\end{align*}
for all $\mathbbm{x}_{k,l}^{(n)} \in \mathbb{X}_{k,l}^{(n)}$, $m \in E_{k,l}^{(n)}$, $1 \leq k <l \leq K_n$ and $n \in \mathbb{N}$.

Similarly as for maximum likelihood estimation, an assumption is needed guaranteeing that the covariance matrix of a Stein estimator is bounded away from 0 asymptotically To this purpose we make the following assumption.
\begin{Assumption}  \label{ass_min_eig_statis}
Assume that there exist $\xi_W,\xi_B>0$ independent of $n$ such that
\begin{align*}
    \min_{1 \leq k \leq K_n} \lambda_\mathrm{min} \bigg( \sum_{m \in E_{k,k}^{(n)}} \mathbb{E}[\Delta_m s_{k,k}^{(n)}(\mathbf{X}_{k,k}^{(n)}) \Delta_m s_{k,k}^{(n)}(\mathbf{X}_{k,k}^{(n)})^{\top}] \bigg) \geq \xi_W,  \\
    \min_{1 \leq k<l \leq K_n} \lambda_{\mathrm{min}} \bigg( \sum_{m \in E_{k,l}^{(n)}} \mathbb{E}[\Delta_m s_{k,l}^{(n)}(\mathbf{X}_{k,l}^{(n)}) \Delta_m s_{k,l}^{(n)}(\mathbf{X}_{k,l}^{(n)})^{\top}] \bigg) \geq \xi_B,
\end{align*}
for all $n \in \mathbb{N}$,  where $\lambda_{\mathrm{min}}$ denotes the smallest eigenvalue of a matrix.
\end{Assumption}
Assumption \ref{ass_min_eig_statis} guarantees that the representations of the exponential families are minimal, see \cite[Chapter 1]{brown1986fundamentals}.

\begin{Example} 
    Here we detail some examples in which Assumptions \ref{ass_growth_statis} and \ref{ass_min_eig_statis} hold.
    \begin{itemize}
        \item[(i)] If $s_{k,l}^{(n)} (\mathbbm{x}_{k,l}^{(n)}) = \mathcal{E}(\mathbbm{x}_{k,l}^{(n)})$ is the number of edges, then,
        \begin{align*}
            \vert \Delta_m s_{k,l}^{(n)}(\mathbbm{x}_{k,l}^{(n)}) \vert =1.
        \end{align*} 
        Thus Assumption \ref{ass_growth_statis} is satisfied with $L_W=L_B=1$, $C_W=C_B=0$ as well as Assumption \ref{ass_min_eig_statis} with $\xi_W=\xi_B=1$.
        \item[(ii)] Let $s_{k,l}^{(n)} (\mathbbm{x}_{k,l}^{(n)}) = \mathcal{E}(\mathbbm{x}_{k,l}^{(n)})+ \mathcal{S}(\mathbbm{x}_{k,l}^{(n)})$, where $\mathcal{S}(\mathbbm{x}_{k,l}^{(n)})$ is the number of 2-stars in $\mathbbm{x}_{k,l}^{(n)}$. Regarding $\mathcal{S}(\mathbbm{x}_{k,l}^{(n)})$ we have $0 \leq \Delta_m \mathcal{S}(\mathbbm{x}_{k,l}^{(n)}) \leq M_n-1$; hence 
        \begin{align*}
            1 \leq \vert \Delta_m s_{k,l}^{(n)}(\mathbbm{x}_{k,l}^{(n)}) \vert \leq M_n.
        \end{align*}
         Thus Assumption \ref{ass_growth_statis} is satisfied with $L_W=L_B=1$, $C_W=C_B=1$ as well as Assumption \ref{ass_min_eig_statis} with $\xi_W=\xi_B=1$.
        \item[(iii)] Let $s(\mathbbm{x}_{k,l}^{(n)})= \mathcal{E}(\mathbbm{x}_{k,l}^{(n)}) + \sum_{i=0}^{\vert A_k\vert-1} o(i) \mathcal{H}_i(\mathbbm{x}_{k,l}^{(n)})$  with $\sum_{i=0}^{\vert A_k\vert-1} o(i) \mathcal{H}_i(\mathbbm{x}_{k,l}^{(n)})$ as in Example \ref{example_degree_seq} and assume that $o(i)$ is strictly increasing in $i$. The change in the degree sequence from changing one edge is at most 2. Assume that $\sum_{i=0}^{M_n-1} o(i) \leq {M_n}^\gamma$ for some $\gamma >0$. Then 
        \begin{align*}
            1 \leq \Vert \Delta_m s_{k,l}^{(n)}(\mathbbm{x}_{k,l}^{(n)}) \Vert 
            \leq 2 M_n^{\gamma} +1  .
        \end{align*}    
        Thus Assumption \ref{ass_growth_statis} is satisfied with $L_W=L_B=3$, $C_W=C_B=\gamma$ as well as Assumption \ref{ass_min_eig_statis} with $\xi_W=\xi_B=1$.
    \end{itemize}
\end{Example}

\subsection{Concentration bounds}  \label{section_standad_errors}
We have the following theorem.
\begin{Theorem} \label{theorem_standard_error}
    Suppose that Assumptions \ref{ass_compact_par}, \ref{ass_growth_statis} and \ref{ass_min_eig_statis} are satisfied. Let $( \hat{\beta}_W^{(n)},  \hat{\beta}_B^{(n)})$ be a Stein estimator as in Definition \ref{definition_stein_est_conv_analysis}. Then for all n $ \in  \mathbb{N}$ and $P \in \mathbb{N}$, there exist constants $T_W, T_B>0$ independent of $n$ such that, with probability at least $1-1/P$,  we have
    \begin{align*}
        \Vert \hat{\beta}_W^{(n)} - \beta_W^{\star} \Vert \leq \frac{1}{\sqrt{K_n}} {P T_W M_n^{5+C_W} \exp(R_W L_W M_n^{C_W} )} 
    \end{align*}
    and
    \begin{align*}
        \Vert \hat{\beta}_B^{(n)} - \beta_B^{\star} \Vert \leq \frac{1}{\sqrt{K_n}} {P T_B  M_n^{5+C_B}\exp(R_B L_B M_n^{C_B} )}.
    \end{align*}
\end{Theorem}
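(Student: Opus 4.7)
The plan is to control $\|\hat\beta_W^{(n)} - \beta_W^\star\|$ by combining a mean-value argument with Chebyshev-type concentration; the bound for $\hat\beta_B^{(n)}$ follows by the same argument with within-block sums replaced by between-block sums. Since $g_W^{(n)}(\mathbf{X}^{(n)}, \hat\beta_W^{(n)}) = 0$ by definition and both $\hat\beta_W^{(n)}, \beta_W^\star \in B_W$, the fundamental theorem of calculus yields
\begin{equation*}
-g_W^{(n)}(\mathbf{X}^{(n)}, \beta_W^\star) = H^{(n)} (\hat\beta_W^{(n)} - \beta_W^\star), \qquad H^{(n)} := \int_0^1 \mathcal{G}_W^{(n)}\bigl(\mathbf{X}^{(n)}, \beta_W^\star + t(\hat\beta_W^{(n)} - \beta_W^\star)\bigr)\,dt,
\end{equation*}
so $\|\hat\beta_W^{(n)} - \beta_W^\star\| \leq \lambda_{\mathrm{min}}(H^{(n)})^{-1}\,\|g_W^{(n)}(\mathbf{X}^{(n)}, \beta_W^\star)\|$. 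This reduces the theorem to an upper tail bound on the gradient norm and a lower tail bound on $\lambda_{\mathrm{min}}(H^{(n)})$, each holding with probability at least $1 - 1/(2P)$.

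For the gradient, Theorem \ref{theorem_stein_identity} gives $\mathbb{E}[g_W^{(n)}(\mathbf{X}^{(n)}, \beta_W^\star)] = 0$, and the local independence structure of the LERGM writes $g_W^{(n)}(\mathbf{X}^{(n)}, \beta_W^\star) = \sum_{k=1}^{K_n} \eta_k$ as a sum of independent centred summands across blocks. Using Assumption \ref{ass_growth_statis}, $|E_{k,k}^{(n)}| \leq M_n^2$ and the boundedness of $\sigma$, each $\|\eta_k\|$ is almost surely bounded by a constant times $L_W M_n^{2+C_W}$, so a multivariate Chebyshev inequality delivers $\|g_W^{(n)}(\mathbf{X}^{(n)}, \beta_W^\star)\| \lesssim L_W M_n^{2+C_W}\sqrt{K_n P}$ with the required probability. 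For the Hessian, any $\tilde\beta \in B_W$ satisfies $|\langle \tilde\beta, \Delta_m s_{k,k}^{(n)}\rangle| \leq R_W L_W M_n^{C_W}$, and the elementary inequality $\sigma'(t) \geq \tfrac14 e^{-|t|}$ yields
\begin{equation*}
H^{(n)} \succeq \tfrac14\, e^{-R_W L_W M_n^{C_W}}\, A^{(n)}, \qquad A^{(n)} := \sum_{k=1}^{K_n}\sum_{m \in E_{k,k}^{(n)}} \Delta_m s_{k,k}^{(n)}(\mathbf{X}_{k,k}^{(n)})\,\Delta_m s_{k,k}^{(n)}(\mathbf{X}_{k,k}^{(n)})^{\top}.
\end{equation*}
The random positive semidefinite matrix $A^{(n)}$ is itself a sum of independent pieces across blocks, whose expectation has smallest eigenvalue at least $K_n \xi_W$ by Assumption \ref{ass_min_eig_statis}, and each per-block summand has Frobenius norm at most $L_W^2 M_n^{2+2C_W}$. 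A matrix Chebyshev step in Frobenius norm, combined with $\|\cdot\|_{\mathrm{op}} \leq \|\cdot\|_F$, then gives $\lambda_{\mathrm{min}}(A^{(n)}) \geq \tfrac12 K_n \xi_W$ with probability at least $1 - 1/(2P)$, provided $K_n$ exceeds a polynomial in $P$, $M_n$ and the model constants.

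Combining the two events by a union bound and plugging them into the mean-value identity delivers the claimed estimate in the large-$K_n$ regime. In the complementary small-$K_n$ regime the Hessian concentration becomes vacuous, but Assumption \ref{ass_compact_par} still yields the deterministic bound $\|\hat\beta_W^{(n)} - \beta_W^\star\| \leq 2R_W$; choosing $T_W$ large enough forces the right-hand side of the theorem to exceed $2R_W$ precisely in this regime, so the stated inequality holds for all $n, P \in \mathbb{N}$. I expect the main obstacle to be the matrix concentration step for $A^{(n)}$: the summands within a single block are strongly dependent through the Glauber-type construction, and one must track both the operator-to-Frobenius norm conversion and the factor $\exp(R_W L_W M_n^{C_W})$ coming from the $\sigma'$ lower bound in order to arrive at an explicit polynomial in $M_n$ matching the exponent $5+C_W$ stated in the theorem.
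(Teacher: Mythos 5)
Your argument hinges on the claim that $g_W^{(n)}(\mathbf{X}^{(n)},\hat\beta_W^{(n)})=0$ ``by definition'', and this is where the proposal has a genuine gap. The estimator in \eqref{def_estimators_std_err} is a \emph{constrained} minimiser of $G_W^{(n)}$ over the ball $B_W$ of Assumption \ref{ass_compact_par}; the theorem explicitly requires only existence, not uniqueness or interior criticality, and the minimiser may sit on the boundary of $B_W$, where the gradient need not vanish. The paper is careful about exactly this point elsewhere: the bound in Theorem \ref{theorem_asym_norm_bound} carries the extra term $\mathbb{E}\big[\Vert g_W^{(n)}(\mathbf{X}^{(n)},\hat\beta_W^{(n)}) \Vert\big]$, and Assumption \ref{ass_conv_prob_remainder} is introduced precisely because the first-order condition cannot be taken for granted. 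As written, your mean-value identity $-g_W^{(n)}(\mathbf{X}^{(n)},\beta_W^\star)=H^{(n)}(\hat\beta_W^{(n)}-\beta_W^\star)$ is therefore unjustified. The step is repairable without assuming an interior zero: since $G_W^{(n)}(\cdot)$ is convex and $\beta_W^\star\in B_W$, the variational inequality $\langle g_W^{(n)}(\mathbf{X}^{(n)},\hat\beta_W^{(n)}),\,\beta_W^\star-\hat\beta_W^{(n)}\rangle\geq 0$ holds at the constrained minimiser, and combining it with $\langle g_W^{(n)}(\mathbf{X}^{(n)},\beta_W^\star)-g_W^{(n)}(\mathbf{X}^{(n)},\hat\beta_W^{(n)}),\,\beta_W^\star-\hat\beta_W^{(n)}\rangle=(\beta_W^\star-\hat\beta_W^{(n)})^\top H^{(n)}(\beta_W^\star-\hat\beta_W^{(n)})$ still yields $\Vert\hat\beta_W^{(n)}-\beta_W^\star\Vert\leq\lambda_{\min}(H^{(n)})^{-1}\Vert g_W^{(n)}(\mathbf{X}^{(n)},\beta_W^\star)\Vert$. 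You should also make the patch for the small-$K_n$ regime explicit: the threshold from your matrix Chebyshev step is of order $K_n\gtrsim P M_n^{4+4C_W}/\xi_W^2$, and one must check that below it the right-hand side $P T_W M_n^{5+C_W}\exp(R_WL_WM_n^{C_W})/\sqrt{K_n}$ can be forced above $2R_W$ uniformly in $M_n$ and $P$; this does work out (the residual factor is of order $\sqrt{P}\,M_n^{3-C_W}\exp(R_WL_WM_n^{C_W})$, which has a positive infimum over $M_n\geq 1$), but it is not automatic and deserves a line.

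Beyond this, your route is genuinely different from the paper's and, once repaired, more elementary. The paper does not use a first-order expansion of $g_W^{(n)}$ at all; it runs a peeling argument in the style of \cite[Theorem 3.2.5]{vandervaart2023weak}, decomposing $B_W$ into shells $S_{W,j}$, lower-bounding the deterministic drift $\mathbb{E}[G_W^{(n)}(\mathbf{X}^{(n)},\beta_W)]-\mathbb{E}[G_W^{(n)}(\mathbf{X}^{(n)},\beta_W^\star)]$ quadratically via Assumptions \ref{ass_compact_par}--\ref{ass_min_eig_statis}, and controlling the empirical fluctuation uniformly over each shell by a maximal inequality in the $\psi_2$-Orlicz norm (Lemma \ref{lemma_concentration_ineq_lergm}, built on the coupling-matrix concentration inequality of Theorem \ref{general_concentration_ineq} and a covering-number bound). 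That machinery is what produces the explicit constants of Remark \ref{remark_standard_error_nonasym} and only uses the minimising property $G_W^{(n)}(\mathbf{X}^{(n)},\hat\beta_W^{(n)})\leq G_W^{(n)}(\mathbf{X}^{(n)},\beta_W^\star)$, which is why it needs no first-order condition. Your approach trades uniformity-over-shells for a single Chebyshev bound on $\Vert g_W^{(n)}(\mathbf{X}^{(n)},\beta_W^\star)\Vert$ plus a concentration bound for the Hessian, exploiting across-block independence in both; the within-block dependence you worry about is harmless since you only need a crude almost-sure bound on each block's contribution. This would in fact give the better exponent $M_n^{2+C_W}$ in place of $M_n^{5+C_W}$ in the large-$K_n$ regime, at the price of the extra case analysis and the $\xi_W^{-1}$ dependence entering through the Hessian rather than through the drift.
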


\begin{proof}
    We start with the within-block edges. Parts of this proof follow along the lines of \cite[Theorem 3.2.5]{vandervaart2023weak}. Define 
    \begin{align*}
        S_{W,j}= \{ \beta_W \in B_W \, \vert \, 2^{j-1} < \Vert \beta_W - \beta_W^{\star} \Vert \leq 2^j \} , \qquad j \in \mathbb{Z}.
    \end{align*}
    Let $Z \in \mathbb{Z}$, then,
    \begin{align*}
        \mathbb{P}(\Vert \hat{\beta}_W^{(n)} - \beta_W^{\star} \Vert > 2^Z  ) &\leq \mathbb{P} \bigg( \hat{\beta}_W^{(n)} \in \bigcup_{j \geq Z} S_{W,j}  \bigg)  \leq \sum_{j\geq Z} \mathbb{P}(\hat{\beta}_W^{(n)} \in S_{W,j} ).
    \end{align*}
    From Definition \ref{definition_stein_est_conv_analysis} we have that 
    \begin{align*}
        G_W^{(n)}(\mathbf{X}^{(n)},\beta_W^{\star}) - G_W^{(n)}(\mathbf{X}^{(n)},{\hat{\beta}}_W^{(n)} ) \leq 0.
    \end{align*}
    Hence if ${\hat{\beta}}_W^{(n)}  \in S_{W,j}$, then 
    \begin{align*}
        \inf_{\beta_W \in S_{W,j}} \big\{ G_W^{(n)}(\mathbf{X}^{(n)},\beta_W) - G_W^{(n)}(\mathbf{X}^{(n)},\beta_W^{\star} ) \big\} \leq 0  .
    \end{align*}
    Thus, 
    \begin{align*}
        &\sum_{j\geq Z} \mathbb{P}(\hat{\beta}_W^{(n)} \in S_{W,j} )
        \\  & \leq  \sum_{j\geq Z} \mathbb{P}\bigg( \inf_{\beta_W \in S_{W,j}} \big\{ G_W^{(n)}(\mathbf{X}^{(n)},\beta_W) - G_W^{(n)}(\mathbf{X}^{(n)},\beta_W^{\star} ) \big\} \leq 0  \bigg) \\
        & =  \sum_{j\geq Z} \mathbb{P}\bigg( \sup_{\beta_W \in S_{W,j}} \big\{ G_W^{(n)}(\mathbf{X}^{(n)},\beta_W^{\star}) - \mathbb{E}[ G_W^{(n)}(\mathbf{X}^{(n)},\beta_W^{\star})] + \mathbb{E}[G_W^{(n)}(\mathbf{X}^{(n)},\beta_W^{\star})] \\
        & \qquad \qquad - \mathbb{E}[G_W^{(n)}(\mathbf{X}^{(n)},\beta_W)] + \mathbb{E}[G_W^{(n)}(\mathbf{X}^{(n)},\beta_W)]- G_W^{(n)}(\mathbf{X}^{(n)},\beta_W)  \big\} \geq 0  \bigg) \\
        &\leq \sum_{j\geq Z} \mathbb{P}\bigg( \sup_{\beta_W \in S_{W,j}} \big\vert G_W^{(n)}(\mathbf{X}^{(n)},\beta_W^{\star}) - \mathbb{E}[ G_W^{(n)}(\mathbf{X}^{(n)},\beta_W^{\star})] \\
        &\qquad \qquad \qquad \qquad + \mathbb{E}[G_W^{(n)}(\mathbf{X}^{(n)},\beta_W)] -G_W^{(n)}(\mathbf{X}^{(n)},\beta_W)\big\vert \\
        & \qquad \qquad + \sup_{\beta_W \in S_{W,j}} \big\{ \mathbb{E}[G_W^{(n)}(\mathbf{X}^{(n)},\beta_W^{\star})]- \mathbb{E}[G_W^{(n)}(\mathbf{X}^{(n)},\beta_W)]  \big\} \geq 0  \bigg). 
    \end{align*}
    Since for the true parameter $\beta_W^{\star}$, by construction we have $\mathbb{E}[g_W^{(n)}(\mathbf{X}^{(n)},\beta_W^{\star})]=0 $,  a second order Taylor expansion around $\beta^*_W$, in integral form, (which is permitted as we assume that $\beta_W^{\star} $ lies in the interior of $B_W$) yields that for $\beta_W \in S_{W,j}$ 
    \begin{align*}
        &\mathbb{E}[G_W^{(n)}(\mathbf{X}^{(n)},\beta_W)]- \mathbb{E}[G_W^{(n)}(\mathbf{X}^{(n)},\beta_W^{\star})] \\
        & = (\beta_W - \beta_W^{\star})^{\top} \mathbb{E}\bigg[ \sum_{1\leq k \leq K_n} \sum_{m \in E_{k,k}^{(n)}} \int_0^1 (1-t) \sigma'( \langle \beta_W^{\star} + t( \beta_W - \beta_W^{\star}) , \Delta_m s_{k,k}^{(n)}(\mathbf{X}_{k,k}^{(n)}) \rangle ) \\
        & \qquad \qquad\times \Delta_m s_{k,k}^{(n)}(\mathbf{X}_{k,k}^{(n)}) \Delta_m s_{k,k}^{(n)}(\mathbf{X}_{k,k}^{(n)})^{\top} dt  \bigg] (\beta_W - \beta_W^{\star}) \\
        & \geq \frac{1}{2} \sum_{1\leq k \leq K_n} \sum_{m \in E_{k,k}^{(n)}} \sigma'(R_W L_W M_n^{C_W} )  (\beta_W - \beta_W^{\star})^{\top} \mathbb{E}\big[ \Delta_m s_{k,k}^{(n)}(\mathbf{X}_{k,k}^{(n)}) \Delta_m s_{k,k}^{(n)}(\mathbf{X}_{k,k}^{(n)})^{\top} \big]  (\beta_W - \beta_W^{\star}) \\
        & \geq \frac{1}{2} K_n \sigma'(R_W L_W M_n^{C_W} ) 2^{2j} \xi_W, 
    \end{align*}
    by Assumptions \ref{ass_compact_par}, \ref{ass_growth_statis} and 
    \ref{ass_min_eig_statis}. In detail, for the first inequality we used that $\sigma'(t)$ is symmetric around 0, with maximum at $t=0$ and monotonically decreasing for $t>0$ and $t<0$, so that 
    if $\vert s\vert < t$ then $\sigma'(s)> \sigma'(t)$; we also use that $\vert \langle \beta_W^{\star} + t( \beta_W - \beta_W^{\star}) , \Delta_m s_{k,k}^{(n)}(\mathbf{X}_{k,k}^{(n)}) \rangle \vert \leq R_W L_W M_n^{C_W} $ by Assumptions \ref{ass_compact_par} and \ref{ass_growth_statis}. For the last inequality we used the Rayleigh Quotient bound and Assumption \ref{ass_min_eig_statis}, which is permitted as for $\beta \in S_{W,j}$, $\Vert \beta_W  - \beta_W^{\star} \Vert \ne 0$. Thus, with the Markov inequality and Lemma \ref{lemma_concentration_ineq_lergm} we obtain
    \begin{align*}
        &\mathbb{P}(\Vert \hat{\beta}_W^{(n)} - \beta_W^{\star} \Vert > 2^Z  ) \\
        & \leq \sum_{j\geq Z} \mathbb{P}\bigg( \sup_{\beta_W \in S_{W,j}} \big\vert G_W^{(n)}(\mathbf{X}^{(n)},\beta_W^{\star}) - \mathbb{E}[ G_W^{(n)}(\mathbf{X}^{(n)},\beta_W^{\star}] + \mathbb{E}[G_W^{(n)}(\mathbf{X}^{(n)},\beta_W)] -G_W^{(n)}(\mathbf{X}^{(n)},\beta_W)\big\vert \\
        & \qquad \qquad \qquad \qquad \geq \frac{1}{2} K_n \sigma'(R_W L_W M_n^{C_W} ) 2^{2j} \xi_W  \bigg) \\
        & \leq \sum_{j\geq Z}  C \sqrt{K_n}  M_n^{5+C_W} 2 ^j \bigg( \frac{1}{2} K_n \sigma'(R_W L_W M_n^{C_W} ) 2^{2j} \xi_W \bigg)^{-1} \\
        & \leq \frac{\widetilde{C} M_n^{5+C_W} }{\sqrt{K_n}\sigma'(R_W L_W M_n^{C_W} )2^Z },
    \end{align*}
    for constants $C, \widetilde{C} >0$ which are independent of $n$ and $Z$, using that   $\sum_{j\geq Z} \frac{1}{2^j} = \frac{2}{2^Z}$. Therefore, for $P \in \mathbb{N}$, with probability at least $1-1/P$, we have
    \begin{align*}
        \Vert \hat{\beta}_W^{(n)} - \beta_W^{\star} \Vert \leq \frac{\widetilde{C} P M_n^{5+C_W} }{\sqrt{K_n}\sigma'(R_W L_W M_n^{C_W} ) }.
    \end{align*}
    Using that $\sigma'(t) \geq \exp(-t)/4$ for $t \geq 0$ gives the first estimate from the statement of the theorem. The statement for the between-block edges follows  analogously, noting that $\vert E_{k,l}^{(n)}\vert \leq M_n^2$ for $k<l$.
\end{proof}

\begin{Remark} \label{remark_standard_error_asym}
The following asymptotic result follows:  For   $( \hat{\beta}_W^{(n)},  \hat{\beta}_B^{(n)})$  a Stein estimator as in as in Definition \ref{definition_stein_est_conv_analysis},
if
\begin{align*}
    & \frac{1}{\sqrt{K_n}} T_W P M_n^{5+C_W} \exp(R_W L_W M_n^{C_W} ) \rightarrow 0 \qquad \text{ and }\\
    & \qquad \frac{1}{\sqrt{K_n}} T_B P M_n^{5+C_B} \exp(R_B L_B M_n^{C_B} ) \rightarrow 0
\end{align*}
as $n \rightarrow \infty$, then  $\Vert \hat{\beta}_W^{(n)} - \beta_W^{\star} \Vert \rightarrow 0 $ and $\Vert \hat{\beta}_B^{(n)} - \beta_B^{\star} \Vert \rightarrow 0 $ in probability as $ n \rightarrow \infty$.
\end{Remark}

\begin{Remark} \label{remark_standard_error_nonasym}
    The constants $T_W$ and $T_B$ in Theorem \ref{theorem_standard_error} can be calculated explicitly. Thus we can obtain non-asymptotic concentration bounds, based on just one observation $\mathbf{X} \sim \mathrm{LERGM}(\beta^{\star})$ and a corresponding estimator $\hat{\beta}$. Then all quantities in the statement of Theorem \ref{theorem_standard_error} no longer depend on $n$.
    and we have the following result: For $P \in \mathbb{N}$, under Assumptions \ref{ass_compact_par} and \ref{ass_growth_statis} (adapted to the setting of just a single random graph), with probability at least $1-1/P$, 
    for $d_1, d_2 \geq 2,$
    \begin{align*}
        \Vert \hat{\beta}_W - \beta_W^{\star} \Vert \leq &  \frac{1}{\sqrt{K}} \bigg( \min_{1 \leq k \leq K} \lambda_\mathrm{min} \bigg( \sum_{m \in E_{k,k}} \mathbb{E}[\Delta_m s_{k,k}(\mathbf{X}_{k,k}) \Delta_m s_{k,k}(\mathbf{X}_{k,k})^{\top}] \bigg) \bigg)^{-1} \\
        & \times 4096 \sqrt{2} L_W \bigg( \frac{\pi}{\sin(\pi/d_1)} + \sqrt{e-1} \bigg\vert 2- \frac{2\sqrt{d_1}}{(e-1)^{1/d_1}} \bigg\vert \bigg) \\
        & \qquad \qquad \times 
        { P M^{5+C_W} \exp(R_W L_W M^{C_W} ) }; \\ 
        \Vert \hat{\beta}_B - \beta_B^{\star} \Vert \leq &  \frac{1}{\sqrt{K}} \bigg(  \min_{1 \leq k<l \leq K} \lambda_{\mathrm{min}} \bigg( \sum_{m \in E_{k,l}} \mathbb{E}[\Delta_m s_{k,l}(\mathbf{X}_{k,l}) \Delta_m s_{k,l}(\mathbf{X}_{k,l})^{\top}] \bigg) \bigg)^{-1} \\
        & \times 16384  L_B \bigg( \frac{\pi}{\sin(\pi/d_2)} + \sqrt{e-1} \bigg\vert 2- \frac{2\sqrt{d_2}}{(e-1)^{1/d_2}} \bigg\vert \bigg) \\
        & \qquad \qquad \times { P M^{5+C_B}\exp(R_B L_B M^{C_B} ) }.
    \end{align*}
    For $d_1=1$ resp.\ $d_2=1$, replace $\frac{\pi}{\sin(\pi/d_1)}$ resp.\ $\frac{\pi}{\sin(\pi/d_2)}$ by $\frac{e}{e-1}-\log(e-1)$.
\end{Remark}

\begin{Remark}
While no related results for Stein estimators are available, it is possible to compare our results to related results obtained for maximum likelihood estimators. 
In Corollary 1 in \cite{schweinberger2020concentration}, a concentration inequality is obtained for the maximum likelihood estimator in the special case of a model with edges and geometrically weighted edgewise shared partners. For the comparison it is useful to rephrase Theorem \ref{theorem_standard_error} in terms of $\epsilon$; 
 for any $\epsilon >0$, Theorem \ref{theorem_standard_error} gives 
\begin{align}\label{eq:epsilonbound}
    \mathbbm{P}(  \Vert \hat{\beta}_W^{(n)} - \beta_W^{\star} \Vert \leq \epsilon) \geq 
    1 -  \frac{1}{\epsilon \sqrt{K_n}}{T_W M_n^{5+C_W} \exp(R_W L_W M_n^{C_W} )}.
\end{align}
 In contrast, the bound in Corollary 1 of \cite{schweinberger2020concentration} decays exponentially in $K_n$, but it is not given in a form that  could be explicitly evaluated. 
  In Theorem 2.1 of \cite{stewart2026rates} an alternative concentration bound for maximum likelihood estimators is given,  under similar conditions as the ones in our paper, but again  not in a form that could  be evaluated explicitly. The result is obtained under the regime that the dimensions $d_1$ and $d_2$ of the parameter spaces grow at least as fast as $\log (1 / \epsilon)$.  Our bound also allows for the dimensions to grow logarithmically in $1/ \epsilon$, but this is not a requirement for it  to be valid.
\end{Remark}

\subsection{Asymptotic normality} \label{section_asymptotic_normality}
For standardisation to obtain asymptotic normality, we introduce a matrix for the within-block parameters,  
\begin{align*}
    \mathbb{E}[g_W^{(n)}(\mathbf{X}^{(n)},\beta_W^{\star})g_W^{(n)}(\mathbf{X}^{(n)},\beta_W^{\star})]^{\top}
\end{align*}
and an analogous  matrix for the between-block parameters. 
Define the deterministic quantities 
\begin{align*} 
    Q_W^{(n)} =  \mathbb{E}[g_W^{(n)}(\mathbf{X}^{(n)},\beta_W^{\star})g_W^{(n)}(\mathbf{X}^{(n)},\beta_W^{\star})^{\top}]^{-1/2} \mathbb{E}[\mathcal{G}_W^{(n)}(\mathbf{X}^{(n)},\beta_W^{\star})]
\end{align*}
and 
\begin{align*}
    Q_B^{(n)} =  \mathbb{E}[g_B^{(n)}(\mathbf{X}^{(n)},\beta_B^{\star})g_B^{(n)}(\mathbf{X}^{(n)},\beta_B^{\star})^{\top}]^{-1/2} \mathbb{E}[\mathcal{G}_B^{(n)}(\mathbf{X}^{(n)},\beta_B^{\star})].
\end{align*}
As an intuition for what follows, we note that, if the  block sizes are fixed $M_n$, then due to the independence of the blocks,  the entries of $Q_W^{(n)}$ and $Q_B^{(n)}$ are typically of order $ O( \sqrt{K_n})$.

We also introduce notation for the smallest eigenvalues,  
\begin{align*}
    \Upsilon_W^{(n)}= &\min_{1 \leq k \leq K_n} \lambda_\mathrm{min} \bigg( \mathbb{E}\bigg[
    \bigg( \sum_{m \in E_{k,k}^{(n)}} \big( \sigma ( \langle \beta_W^{\star} , \Delta_m s_{k,k}^{(n)}(\mathbf{X}_{k,k}^{(n)}) \rangle )  \Delta_m s_{k,k}^{(n)}(\mathbf{X}_{k,k}^{(n)}) \\[-1em]
    & \qquad \qquad \qquad \qquad \qquad \qquad \qquad \qquad \qquad \qquad \qquad +  s_{k,k}^{(n)}(\Diamond_{m}^{0} \mathbf{X}_{k,k}^{(n)}) - s_{k,k}^{(n)}(\mathbf{X}_{k,k}^{(n)})\big)  \bigg)   \\
    & \qquad \times \bigg( \sum_{m \in E_{k,k}^{(n)}} \big( \sigma ( \langle \beta_W^{\star} , \Delta_m s_{k,k}^{(n)}(\mathbf{X}_{k,k}^{(n)}) \rangle )  \Delta_m s_{k,k}^{(n)}(\mathbf{X}_{k,k}^{(n)}) +  s_{k,k}^{(n)}(\Diamond_{m}^{0} \mathbf{X}_{k,k}^{(n)}) - s_{k,k}^{(n)}(\mathbf{X}_{k,k}^{(n)}) \big) \bigg)^{\top} \bigg] \bigg),  \\
    \Upsilon_B^{(n)}& = \min_{1 \leq k<l \leq K_n} \lambda_\mathrm{min} \bigg( \mathbb{E}   \bigg[ \bigg( \sum_{m \in E_{k,l}^{(n)}}\big( \sigma ( \langle \beta_B^{\star} , \Delta_m s_{k,l}^{(n)}(\mathbf{X}_{k,l}^{(n)}) \rangle )  \Delta_m s_{k,l}^{(n)}(\mathbf{X}_{k,l}^{(n)}) \nonumber \\[-1em]
    & \qquad \qquad \qquad \qquad \qquad \qquad \qquad \qquad \qquad \qquad \qquad+  s_{k,l}^{(n)}(\Diamond_{m}^{0} \mathbf{X}_{k,l}^{(n)}) - s_{k,l}^{(n)}(\mathbf{X}_{k,l}^{(n)}) \big) \bigg)  \\
    & \qquad \times \bigg(  \sum_{m \in E_{k,l}^{(n)}} \big( \sigma ( \langle \beta_B^{\star} , \Delta_m s_{k,l}^{(n)}(\mathbf{X}_{k,l}^{(n)}) \rangle )  \Delta_m s_{k,l}^{(n)}(\mathbf{X}_{k,l}^{(n)}) +  s_{k,l}^{(n)}(\Diamond_{m}^{0} \mathbf{X}_{k,l}^{(n)}) - s_{k,l}^{(n)}(\mathbf{X}_{k,l}^{(n)})  \big) \bigg)^{\top}  \bigg] \bigg). 
\end{align*}

For $p \geq 1,$ the $p$-Wasserstein distance between two $\mathbb{R}^d$-valued random vectors $X \sim \mathbb{P}_1$, $Y \sim \mathbb{P}_2$ is 
\begin{align*}
    d_{W_p}( X,Y) = \inf_{\mathbb{Q}} \bigg( \int_{\mathbb{R}^d \times \mathbb{R}^d} \Vert x-y\Vert^p \mathbb{Q}(dx,dy) \bigg)^{1/p},
\end{align*}
where the infimum is taken with respect to all probability measures $\mathbb{Q}$ on $\mathbb{R}^d \times \mathbb{R}^d$ with marginals $\mathbb{P}_1$, $\mathbb{P}_2$. Moreover, we have the dual respresentation of $d_{W_1}(\cdot, \cdot)$ given by
\begin{align*}
    d_{W_1}(X,Y) = \sup \Big\{ \vert \mathbb{E}[h(X) - \mathbb{E}[h(Y)] \vert \, \big\vert \, &  h:\mathbb{R}^d \rightarrow \mathbb{R}:  \vert h(x) - h(y) \vert \leq \Vert x-y \Vert \text{ for all } x,y \in \mathbb{R}^d \Big\}.
\end{align*}
In particular, $d_{W_1}(X,Y) \leq \mathbb{E}  \Vert X-Y \Vert$. The following bounds require existence, but not   uniqueness,  of Stein estimators.
\begin{Theorem} \label{theorem_asym_norm_bound}
    Let $(\hat{\beta}_W^{(n)}$, $\hat{\beta}_B^{(n)})$ be a Stein estimator defined as in Definition \ref{definition_stein_est_conv_analysis}. Assume that Assumptions \ref{ass_compact_par},   and \ref{ass_growth_statis}  hold. 
    Let $Z_{d_i} \sim N(0,I_{d_i}),$ for $i=1,2$. Then, for all $n \in \mathbb{N}$,
    \begin{align*}
        d_{W_1}\Big(Q_W^{(n)} &\big( \hat{\beta}_W^{(n)}-\beta_W^{\star}\big), Z_{d_1} \Big) \\
        \leq & \bigg(\bigg( 8+\sum_{k>0} \frac{4^k}{kk!} \bigg)^{1/2} + \sqrt{2}\bigg) \frac{d_1^{3/4} L_W^2}{\min\big\{\Upsilon_W^{(n)},\big(\Upsilon_W^{(n)}\big)^{3/4}\big\}} \frac{M_n^{3C_W+6} }{\sqrt{K_n}} \\
        & +\frac{1}{\big(K_n\Upsilon_W^{(n)}\big)^{1/2}} \bigg( \mathbb{E}[ \Vert \mathcal{G}_W^{(n)}(\mathbf{X}^{(n)},\beta_W^{\star}) - \mathbb{E}[\mathcal{G}_W^{(n)}(\mathbf{X}^{(n)},\beta_W^{\star})] \Vert^2 ]^{1/2} \mathbb{E}\big[ \Vert \hat{\beta}_W^{(n)} - \beta_W^{\star} \Vert^2 ]^{1/2} \\
        & \qquad \qquad+ \frac{d_1^2}{20} K_n \binom{M_n}{2}  L_W^3 M_n^{3C_W} \mathbb{E}\big[ \Vert \hat{\beta}_W^{(n)}-\beta_W^{\star}\Vert^2 \big] \bigg) \\
         & \qquad +\Vert \mathbb{E}[g_W^{(n)}(\mathbf{X}^{(n)},\beta_W^{\star})g_W^{(n)}(\mathbf{X}^{(n)},\beta_W^{\star})^{\top}]^{-1/2} \Vert \mathbb{E}\big[\Vert g_W^{(n)}(\mathbf{X}^{(n)},\hat{\beta}_W^{(n)}) \Vert \big]
    \end{align*}
  and 
    \begin{align*}
        d_{W_1}\Big(Q_B^{(n)} &\big( \hat{\beta}_B^{(n)}-\beta_B^{\star}\big), Z_{d_2}  \Big) \\
        \leq & \bigg(\bigg( 8+\sum_{k>0} \frac{4^k}{kk!} \bigg)^{1/2} + \sqrt{2}\bigg) \frac{4d_2^{3/4} L_B^2}{\min\big\{\Upsilon_B^{(n)},\big(\Upsilon_B^{(n)}\big)^{3/4}\big\}} \frac{M_n^{3C_B+6} }{\sqrt{K_n}} \\
        &  +\frac{1}{\big( K_n\Upsilon_B^{(n)}\big)^{1/2}} \bigg( \mathbb{E}[ \Vert \mathcal{G}_B^{(n)}(\mathbf{X}^{(n)},\beta_B^{\star}) -\mathbb{E}[\mathcal{G}_B^{(n)}(\mathbf{X}^{(n)},\beta_B^{\star})]  \Vert^2 ]^{1/2} \mathbb{E}\big[ \Vert \hat{\beta}_B^{(n)} - \beta_B^{\star} \Vert^2 ]^{1/2} \\
        & \qquad \qquad+ \frac{d_1^2}{20} K_n M_n^2  L_B^3 M_n^{3C_B} \mathbb{E}\big[ \Vert \hat{\beta}_B^{(n)}-\beta_B^{\star}\Vert^2 \big] \bigg) \\
        & \qquad +\Vert \mathbb{E}[g_B^{(n)}(\mathbf{X}^{(n)},\beta_B^{\star})g_B^{(n)}(\mathbf{X}^{(n)},\beta_B^{\star})^{\top}]^{-1/2} \Vert \mathbb{E}\big[\Vert g_B^{(n)}(\mathbf{X}^{(n)},\hat{\beta}_B^{(n)}) \Vert \big].
    \end{align*}
\end{Theorem}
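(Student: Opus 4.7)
The plan is to proceed via a Z-estimator Taylor expansion of $g_W^{(n)}$ around $\beta_W^{\star}$, combined with a multivariate Wasserstein-1 central limit theorem applied to the resulting leading term. Writing $V_W^{(n)} := \mathbb{E}[g_W^{(n)}(\mathbf{X}^{(n)},\beta_W^{\star}) g_W^{(n)}(\mathbf{X}^{(n)},\beta_W^{\star})^{\top}]$ so that $Q_W^{(n)} = (V_W^{(n)})^{-1/2}\mathbb{E}[\mathcal{G}_W^{(n)}(\mathbf{X}^{(n)},\beta_W^{\star})]$, the fundamental theorem of calculus gives $g_W^{(n)}(\mathbf{X}^{(n)},\hat{\beta}_W^{(n)}) - g_W^{(n)}(\mathbf{X}^{(n)},\beta_W^{\star}) = \tilde{H}_W^{(n)}(\hat{\beta}_W^{(n)}-\beta_W^{\star})$ with $\tilde{H}_W^{(n)} = \int_0^1 \mathcal{G}_W^{(n)}(\mathbf{X}^{(n)},\beta_W^{\star}+t(\hat{\beta}_W^{(n)}-\beta_W^{\star}))\,dt$. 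Solving for $\mathbb{E}[\mathcal{G}_W^{(n)}](\hat{\beta}_W^{(n)}-\beta_W^{\star})$ after adding and subtracting $\mathcal{G}_W^{(n)}(\mathbf{X}^{(n)},\beta_W^{\star})$, and then multiplying by $(V_W^{(n)})^{-1/2}$, produces the decomposition
\begin{align*}
Q_W^{(n)}(\hat{\beta}_W^{(n)}-\beta_W^{\star})
&= -(V_W^{(n)})^{-1/2} g_W^{(n)}(\mathbf{X}^{(n)},\beta_W^{\star})
+ (V_W^{(n)})^{-1/2} g_W^{(n)}(\mathbf{X}^{(n)},\hat{\beta}_W^{(n)}) \\
&\quad + (V_W^{(n)})^{-1/2}\bigl(\mathbb{E}[\mathcal{G}_W^{(n)}(\mathbf{X}^{(n)},\beta_W^{\star})] - \mathcal{G}_W^{(n)}(\mathbf{X}^{(n)},\beta_W^{\star})\bigr)(\hat{\beta}_W^{(n)}-\beta_W^{\star}) \\
&\quad + (V_W^{(n)})^{-1/2}\bigl(\mathcal{G}_W^{(n)}(\mathbf{X}^{(n)},\beta_W^{\star})-\tilde{H}_W^{(n)}\bigr)(\hat{\beta}_W^{(n)}-\beta_W^{\star}).
\end{align*}
Using $d_{W_1}(X+Y,Z)\le d_{W_1}(X,Z)+\mathbb{E}\|Y\|$ (valid by the dual formulation of $d_{W_1}$), the four terms on the right will generate the four summands of the stated bound.

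For the leading term, the product structure of the LERGM gives $g_W^{(n)}(\mathbf{X}^{(n)},\beta_W^{\star}) = \sum_{k=1}^{K_n}\xi_k^{(n)}$ as a sum of independent, mean-zero (by Theorem \ref{theorem_stein_identity}) random vectors, with $V_W^{(n)} = \sum_k\mathbb{E}[\xi_k^{(n)}(\xi_k^{(n)})^{\top}]$. I would apply a multivariate Wasserstein-1 normal approximation bound for standardised sums of independent random vectors, proved by Stein's method combined with Gaussian smoothing; the series factor $\sum_{k>0} 4^k/(kk!)$ appears via the Gaussian moment generating function, while the dimensional exponent $d_1^{3/4}$ and the additive $\sqrt{2}$ arise from the optimal choice of smoothing parameter. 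This yields a bound of the form $C_{d_1}\|(V_W^{(n)})^{-1/2}\|^3\sum_k\mathbb{E}\|\xi_k^{(n)}\|^3$. Weyl's inequality yields $\lambda_{\min}(V_W^{(n)})\ge K_n\Upsilon_W^{(n)}$, hence $\|(V_W^{(n)})^{-1/2}\|\le (K_n\Upsilon_W^{(n)})^{-1/2}$; combined with Assumption \ref{ass_growth_statis} giving $\|\xi_k^{(n)}\|\le |E_{k,k}^{(n)}|L_W M_n^{C_W}\le \tfrac{1}{2}L_W M_n^{C_W+2}$, the first stated summand follows.

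The three remaining summands will follow by taking expected norms. Cauchy-Schwarz on $\mathbb{E}\|(V_W^{(n)})^{-1/2}(\mathbb{E}[\mathcal{G}_W^{(n)}]-\mathcal{G}_W^{(n)})(\hat{\beta}_W^{(n)}-\beta_W^{\star})\|$ together with the operator-norm bound on $(V_W^{(n)})^{-1/2}$ yields the second summand. For the Taylor remainder, the identity $\sigma''(t)=\sigma(t)(1-\sigma(t))(1-2\sigma(t))$ bounds $\|\sigma''\|_\infty$ by an absolute constant, which combined with $\|\Delta_m s_{k,k}^{(n)}\|^3\le L_W^3 M_n^{3C_W}$ (Assumption \ref{ass_growth_statis}), summation over $|E_{k,k}^{(n)}|\le\binom{M_n}{2}$ and over $k\le K_n$, and Cauchy-Schwarz against $\mathbb{E}\|\hat{\beta}_W^{(n)}-\beta_W^{\star}\|^4$, produces the third summand (with the specific constant $1/20$ coming from the combinatorial bookkeeping of the sigmoid bound). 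The fourth summand is immediate from operator-norm sub-multiplicativity applied to $(V_W^{(n)})^{-1/2}g_W^{(n)}(\mathbf{X}^{(n)},\hat{\beta}_W^{(n)})$. The between-block bound is proved by the same argument with $|E_{k,l}^{(n)}|\le M_n^2$ in place of $\binom{M_n}{2}$.

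The main obstacle will be the multivariate Wasserstein-1 CLT step itself, where the explicit constants, the $d_1^{3/4}$ dimensional dependence, and the series $\sum_{k>0} 4^k/(kk!)$ must all be tracked through a careful Stein's-method plus Gaussian-smoothing argument with an optimised smoothing parameter; this is most naturally isolated in an auxiliary lemma in the appendix. Once that bound is in hand, the remainder of the argument is bookkeeping, drawing on Assumptions \ref{ass_compact_par} and \ref{ass_growth_statis} and on the eigenvalue lower bound encoded in $\Upsilon_W^{(n)}$.
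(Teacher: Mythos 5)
Your proposal follows essentially the same route as the paper: the same four-term decomposition of $Q_W^{(n)}(\hat{\beta}_W^{(n)}-\beta_W^{\star})$ (your integral-form remainder $(\mathcal{G}_W^{(n)}(\mathbf{X}^{(n)},\beta_W^{\star})-\tilde{H}_W^{(n)})(\hat{\beta}_W^{(n)}-\beta_W^{\star})$ is algebraically identical to the paper's explicit second-order Taylor remainder with $\sigma''$), the same use of $d_{W_1}(X+Y,Z)\le d_{W_1}(X,Z)+\mathbb{E}\Vert Y\Vert$, Weyl's inequality, Cauchy--Schwarz, and the same deferral of the multivariate Wasserstein CLT for the standardised sum of independent block contributions to an appendix lemma (the paper's Lemma \ref{lemma_clt_est_eq}, built on \cite[Theorem 5]{bonis2020stein}). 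The only discrepancies are cosmetic: the paper's CLT input is a fourth-moment/Frobenius-norm bound rather than the third-moment form you sketch, and the per-block norm bound carries an extra factor of $2$ from the two pieces $\sigma(\cdot)\Delta_m s$ and $s-s(\Diamond_m^0\cdot)$; neither affects the argument.
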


\begin{Remark}
    On the scaling: A major issue for MPLEs in general is that estimating   the standard deviation of the MPLE by the inverse of the Fisher information can be severely biased, see \cite{schmid2023computing}. In \cite{schmid2023computing}, the Godambe matrix is proposed instead; however this matrix can in general not be computed for an ERGM. In the particular case of a LERGM, using the scaling by the matrix $Q^{(n)}$, with entries typically of order $\sqrt{K_n}$, provably yields asympotic normality under additional assumptions, see Theorem \ref{theorem_asym_norm_conv}. We do not require that $\lim_{n\rightarrow \infty} K_n^{-1/2} Q^{(n)}$ exists; if it does, then this limit gives the asymptotic covariance matrix of the MPLE. 
\end{Remark}
\begin{Remark}
    If $\hat{\beta}_W^{(n)}$ and $\hat{\beta}_B^{(n)}$ minimise the target functions in Definition \ref{definition_stein_est_conv_analysis} at a local minimum such that $g_W^{(n)}(\mathbf{X}^{(n)},\hat{\beta}_W^{(n)})=0$ and $g_B^{(n)}(\mathbf{X}^{(n)},\hat{\beta}_B^{(n)})=0$ for all realisations of $\mathbf{X}^{(n)}$, then the  quantities 
    \begin{align*}
         \Vert \mathbb{E}[g_W^{(n)}(\mathbf{X}^{(n)},\beta_W^{\star})g_W^{(n)}(\mathbf{X}^{(n)},\beta_W^{\star})^{\top}]^{-1/2} \Vert \mathbb{E}\big[\Vert g_W^{(n)}(\mathbf{X}^{(n)},\hat{\beta}_W^{(n)}) \Vert \big]
    \end{align*}
    and
    \begin{align*}
         \Vert \mathbb{E}[g_B^{(n)}(\mathbf{X}^{(n)},\beta_B^{\star})g_B^{(n)}(\mathbf{X}^{(n)},\beta_B^{\star})^{\top}]^{-1/2} \Vert \mathbb{E}\big[\Vert g_B^{(n)}(\mathbf{X}^{(n)},\hat{\beta}_B^{(n)}) \Vert \big]
    \end{align*}
    in the bounds of Theorem \ref{theorem_asym_norm_bound} are equal to zero.
\end{Remark}
\begin{Remark} We note that the bounds in Theorem \ref{theorem_asym_norm_bound} are explicit and can be evaluated for any given LERGM. However the expectations may not be easy to evaluate. Under Assumption \ref{ass_compact_par}, the terms $\mathbb{E}[ \Vert \hat{\beta}_W^{(n)}-\beta_W^{\star}\Vert^k ]$  and  $\mathbb{E}[ \Vert \hat{\beta}_B^{(n)}-\beta_B^{\star}\Vert^k ]$, for $k=2,4$,  in the bounds of Theorem \ref{theorem_asym_norm_bound}  could be bounded coarsely using \eqref{eq:epsilonbound}, as follows; 
\begin{align*}
    \mathbb{E}[ \Vert \hat{\beta}_W^{(n)}-\beta_W^{\star}\Vert^k ]
    =&  \int_0^{(2 (R_W + R_B))^4} \mathbb{P}( \Vert \hat{\beta}_B^{(n)}-\beta_B^{\star} \Vert^k > \epsilon) d\epsilon \\
    \le&  \frac{1}{\sqrt{K_n}}
    + 4  \frac{1}{\sqrt{K_n}} ( \log (2 (R_W + R_B)) ) \\
    & + \log (\sqrt{K_n}) ) {T_W M_n^{5+C_W} \exp(R_W L_W M_n^{C_W} )}.    
\end{align*} 
\end{Remark}

We now prove Theorem \ref{theorem_asym_norm_bound}. The method of proof is similar to a standard way of showing asymptotic normality for maximum likelihood estimators in an i.i.d.\ setting using Stein's method, see \cite{anastasiou2020bounds}, and heavily relies on Taylor expansion.
\begin{proof}
We start with the within-block edges. A Taylor expansion with integral remainder term gives
\begin{align*}
    & g_W^{(n)}(\mathbf{X}^{(n)},\hat{\beta}_W^{(n)}) = g_W^{(n)}(\mathbf{X}^{(n)},\beta_W^{\star}) + \mathcal{G}_W^{(n)}(\mathbf{X}^{(n)},\beta_W^{\star}) (\hat{\beta}_W^{(n)} - \beta_W^{\star}) \\
    &+ \sum_{1 \leq i,j \leq d_1} \sum_{1\leq k \leq K_n} \sum_{m \in E_{k,k}^{(n)}} \int_0^1 (1-t) \sigma'' ( \langle \beta_W^{\star} + t(\hat{\beta}_W^{(n)} - \beta_W^{\star}) , \Delta_m s_{k,k}^{(n)}(\mathbf{X}_{k,k}^{(n)}) \rangle )dt  \\
    & \qquad \times \big[\Delta_m s_{k,k}^{(n)}(\mathbf{X}_{k,k}^{(n)}) \big]_i \big[\Delta_m s_{k,k}^{(n)}(\mathbf{X}_{k,k}^{(n)}) \big]_j \Delta_m s_{k,k}^{(n)}(\mathbf{X}_{k,k}^{(n)})  \big[ \hat{\beta}_W^{(n)}-\beta_W^{\star}\big]_i \big[ \hat{\beta}_W^{(n)}-\beta_W^{\star}\big]_j,
\end{align*}
where $[\cdot]_i$ denotes the $i$th component of a vector. Reorganising terms yields
\begin{align*}
    Q_W^{(n)} \big( \hat{\beta}_W^{(n)}&-\beta_W^{\star}\big) = -\mathbb{E}[g_W^{(n)}(\mathbf{X}^{(n)},\beta_W^{\star})g_W^{(n)}(\mathbf{X}^{(n)},\beta_W^{\star})^{\top}]^{-1/2} \bigg( g_W^{(n)}(\mathbf{X}^{(n)},\beta_W^{\star}) \\
    &+ \big(\mathcal{G}_W^{(n)}(\mathbf{X}^{(n)},\beta_W^{\star}) - \mathbb{E}[\mathcal{G}_W^{(n)}(\mathbf{X}^{(n)},\beta_W^{\star})]  \big)(\hat{\beta}_W^{(n)} - \beta_W^{\star})  \\
    & +\sum_{1 \leq i,j \leq d_1} \sum_{1\leq k \leq K_n} \sum_{m \in E_{k,k}^{(n)}} \int_0^1 (1-t) \sigma'' ( \langle \beta_W^{\star} + t(\hat{\beta}_W^{(n)} - \beta_W^{\star}) , \Delta_m s_{k,k}^{(n)}(\mathbf{X}_{k,k}^{(n)}) \rangle )dt  \\
    & \qquad \times \big[\Delta_m s_{k,k}^{(n)}(\mathbf{X}_{k,k}^{(n)}) \big]_i \big[\Delta_m s_{k,k}^{(n)}(\mathbf{X}_{k,k}^{(n)}) \big]_j \Delta_m s_{k,k}^{(n)}(\mathbf{X}_{k,k}^{(n)})  \big[ \hat{\beta}_W^{(n)}-\beta_W^{\star}\big]_i \big[ \hat{\beta}_W^{(n)}-\beta_W^{\star}\big]_j \bigg)  \\
    & +\mathbb{E}[g_W^{(n)}(\mathbf{X}^{(n)},\beta_W^{\star})g_W^{(n)}(\mathbf{X}^{(n)},\beta_W^{\star})^{\top}]^{-1/2} g_W^{(n)}(\mathbf{X}^{(n)},\hat{\beta}_W^{(n)}).
\end{align*}
Therefore,
\begin{align*}
    d_{W_1}\Big(Q_W^{(n)} &\big( \hat{\beta}_W^{(n)}-\beta_W^{\star}\big) , \mathbb{E}[g_W^{(n)}(\mathbf{X}^{(n)},\beta_W^{\star})g_W^{(n)}(\mathbf{X}^{(n)},\beta_W^{\star})^{\top}]^{-1/2} g_W^{(n)}(\mathbf{X}^{(n)},\beta_W^{\star}) \Big) \\
    & \leq \mathbb{E} \bigg[ \bigg\Vert -\mathbb{E}[g_W^{(n)}(\mathbf{X}^{(n)},\beta_W^{\star})g_W^{(n)}(\mathbf{X}^{(n)},\beta_W^{\star})^{\top}]^{-1/2} \\
    & \times \bigg(  \big(\mathcal{G}_W^{(n)}(\mathbf{X}^{(n)},\beta_W^{\star}) - \mathbb{E}[\mathcal{G}_W^{(n)}(\mathbf{X}^{(n)},\beta_W^{\star})]  \big)(\hat{\beta}_W^{(n)} - \beta_W^{\star}) \\
    & +\sum_{1 \leq i,j \leq d_1} \sum_{1\leq k \leq K_n} \sum_{m \in E_{k,k}^{(n)}} \int_0^1 (1-t) \sigma'' ( \langle \beta_W^{\star} + t(\hat{\beta}_W^{(n)} - \beta_W^{\star}) , \Delta_m s_{k,k}^{(n)}(\mathbf{X}_{k,k}^{(n)}) \rangle )dt  \\
    & \qquad \times \big[\Delta_m s_{k,k}^{(n)}(\mathbf{X}_{k,k}^{(n)}) \big]_i \big[\Delta_m s_{k,k}^{(n)}(\mathbf{X}_{k,k}^{(n)}) \big]_j \Delta_m s_{k,k}^{(n)}(\mathbf{X}_{k,k}^{(n)})  \big[ \hat{\beta}_W^{(n)}-\beta_W^{\star}\big]_i \big[ \hat{\beta}_W^{(n)}-\beta_W^{\star}\big]_j \bigg) \\
    & +\mathbb{E}[g_W^{(n)}(\mathbf{X}^{(n)},\beta_W^{\star})g_W^{(n)}(\mathbf{X}^{(n)},\beta_W^{\star})^{\top}]^{-1/2} g_W^{(n)}(\mathbf{X}^{(n)},\hat{\beta}_W^{(n)}) \bigg\Vert \bigg].
\end{align*}
As \begin{align*}
    \mathbb{E} \bigg[ 
    \sum_{m \in E_{k,k}^{(n)}} \big( \sigma ( \langle \beta_W^{\star} , \Delta_m s_{k,k}^{(n)}(\mathbf{X}_{k,k}^{(n)}) \rangle )  \Delta_m s_{k,k}^{(n)}(\mathbf{X}_{k,k}^{(n)}) +  s_{k,k}^{(n)}(\Diamond_{m}^{0} \mathbf{X}_{k,k}^{(n)}) - s_{k,k}^{(n)}(\mathbf{X}_{k,k}^{(n)}) \big) \bigg] 
    = 0
\end{align*}
for all $1\leq k \leq K_n$, the matrix $\mathbb{E}[g_W^{(n)}(\mathbf{X}^{(n)},\beta_W^{\star})g_W^{(n)}(\mathbf{X}^{(n)},\beta_W^{\star})^{\top}]$ is positive semi-definite. For a positive semi-definite matrix $W \in \mathbb{R}^{d \times d}$, we have that $\Vert W^{-1/2} \Vert = 1/\sqrt{\lambda_\mathrm{min}(W)}$. Using Weyl's inequality we obtain that 
 \begin{align} \label{weylbound}
     \Vert \mathbb{E}[g_W^{(n)}(\mathbf{X}^{(n)},\beta_W^{\star})g_W^{(n)}(\mathbf{X}^{(n)},\beta_W^{\star})^{\top}]^{-1/2} \Vert \leq {( K_n\Upsilon_W^{(n)})^{-1/2}}.
\end{align}
Moreover we have from the Cauchy-Schwarz inequality that
\begin{align*}
    &\mathbb{E}\big[ \Vert \big( \mathbb{E}[\mathcal{G}_W^{(n)}(\mathbf{X}^{(n)},\beta_W^{\star})] -\mathcal{G}_W^{(n)}(\mathbf{X}^{(n)},\beta_W^{\star})  \big)(\hat{\beta}_W^{(n)} - \beta_W^{\star}) \Vert \big] \\
    & \leq \mathbb{E}[ \Vert  \mathbb{E}[\mathcal{G}_W^{(n)}(\mathbf{X}^{(n)},\beta_W^{\star})] -\mathcal{G}_W^{(n)}(\mathbf{X}^{(n)},\beta_W^{\star}) \Vert^2 ]^{1/2} \mathbb{E}\big[ \Vert \hat{\beta}_W^{(n)} - \beta_W^{\star} \Vert^2 ]^{1/2}.
\end{align*} 
In addition, from the triangle inequality,
\begin{align*}
    &\mathbb{E}\bigg[\bigg\Vert \sum_{1 \leq i,j \leq d_1} \sum_{1\leq k \leq K_n} \sum_{m \in E_{k,k}^{(n)}} \int_0^1 (1-t) \sigma'' ( \langle \beta_W^{\star} + t(\hat{\beta}_W^{(n)} - \beta_W^{\star}) , \Delta_m s_{k,k}^{(n)}(\mathbf{X}_{k,k}^{(n)}) \rangle )dt  \\
    & \qquad \times \big[\Delta_m s_{k,k}^{(n)}(\mathbf{X}_{k,k}^{(n)}) \big]_i \big[\Delta_m s_{k,k}^{(n)}(\mathbf{X}_{k,k}^{(n)}) \big]_j \Delta_m s_{k,k}^{(n)}(\mathbf{X}_{k,k}^{(n)})  \big[ \hat{\beta}_W^{(n)}-\beta_W^{\star}\big]_i \big[ \hat{\beta}_W^{(n)}-\beta_W^{\star}\big]_j  \bigg\Vert \bigg] \\
    & \leq  \sum_{1 \leq i,j \leq d_1} \sum_{1\leq k \leq K_n} \sum_{m \in E_{k,k}^{(n)}}  \mathbb{E} \bigg[  \int_0^1 \vert (1-t) \sigma'' ( \langle \beta_W^{\star} + t(\hat{\beta}_W^{(n)} - \beta_W^{\star}) , \Delta_m s_{k,k}^{(n)}(\mathbf{X}_{k,k}^{(n)}) \rangle )  \vert dt \\
    & \qquad \times \Vert \Delta_m s_{k,k}^{(n)}(\mathbf{X}_{k,k}^{(n)}) \Vert^3 \Vert \hat{\beta}_W^{(n)}-\beta_W^{\star}\Vert^2   \bigg] \\
    & \leq  \frac{d_1^2}{20}  \sum_{1\leq k \leq K_n} \sum_{m \in E_{k,k}^{(n)}}  \mathbb{E}\big[  \Vert \Delta_m s_{k,k}^{(n)}(\mathbf{X}_{k,k}^{(n)}) \Vert^3 \Vert \hat{\beta}_W^{(n)}-\beta_W^{\star}\Vert^2 \big] \\
    & \leq  \frac{d_1^2}{20}  \sum_{1\leq k \leq K_n} \sum_{m \in E_{k,k}^{(n)}}  L_W^3 M_n^{3C_W} \mathbb{E}\big[ \Vert \hat{\beta}_W^{(n)}-\beta_W^{\star}\Vert^2 \big] \\
    & \leq  \frac{d_1^2}{20} K_n \binom{M_n}{2}  L_W^3 M_n^{3C_W} \mathbb{E}\big[ \Vert \hat{\beta}_W^{(n)}-\beta_W^{\star}\Vert^4 \big]^{1/2},
\end{align*}
where we used Assumption \ref{ass_growth_statis} and $\sigma''(t)\leq 1/10$. Therefore,
\begin{align*}
    d_{W_1}\Big(Q_W^{(n)} &\big( \hat{\beta}_W^{(n)}-\beta_W^{\star}\big) , \mathbb{E}[g_W^{(n)}(\mathbf{X}^{(n)},\beta_W^{\star})g_W^{(n)}(\mathbf{X}^{(n)},\beta_W^{\star})^{\top}]^{-1/2} g_W^{(n)}(\mathbf{X}^{(n)},\beta_W^{\star}) \Big) \\
     \leq & \frac{1}{\big( K_n\Upsilon_W^{(n)}\big)^{1/2}} \bigg( \mathbb{E}[ \Vert \mathcal{G}_W^{(n)}(\mathbf{X}^{(n)},\beta_W^{\star}) - \mathbb{E}[\mathcal{G}_W^{(n)}(\mathbf{X}^{(n)},\beta_W^{\star})] \Vert^2 ]^{1/2} \mathbb{E}\big[ \Vert \hat{\beta}_W^{(n)} - \beta_W^{\star} \Vert^2 ]^{1/2} \\
    & \qquad \qquad+ \frac{d_1^2}{20} K_n \binom{M_n}{2}  L_W^3 M_n^{3C_W} \mathbb{E}\big[ \Vert \hat{\beta}_W^{(n)}-\beta_W^{\star}\Vert^2 \big] \bigg) \\
    & +\Vert \mathbb{E}[g_W^{(n)}(\mathbf{X}^{(n)},\beta_W^{\star})g_W^{(n)}(\mathbf{X}^{(n)},\beta_W^{\star})^{\top}]^{-1/2} \Vert \mathbb{E}\big[\Vert g_W^{(n)}(\mathbf{X}^{(n)},\hat{\beta}_W^{(n)}) \Vert \big].
\end{align*}
Now, we have $d_{W_2}(\cdot,\cdot) \leq d_{W_1}(\cdot,\cdot) $.   Lemma \ref{lemma_clt_est_eq},  proved in Appendix \ref{section_asymptotic_normality}  using a Wasserstein bound  from \cite{bonis2020stein}  based on Stein's method, yields
\begin{align*}
    d_{W_1}\Big(Q_W^{(n)} &\big( \hat{\beta}_W^{(n)}-\beta_W^{\star}\big), {Z}_{d_1}) \Big) \\
     \leq & d_{W_1}\Big( \mathbb{E}\big[g_W^{(n)}(\mathbf{X}^{(n)},\beta_W^{\star})g_W^{(n)}(\mathbf{X}^{(n)},\beta_W^{\star})^{\top} \big]^{-1/2} g_W^{(n)}(\mathbf{X}^{(n)},\beta_W^{\star}) , {Z}_{d_1})  \Big) \\
    & \qquad + d_{W_1}\Big(Q_W^{(n)} \big( \hat{\beta}_W^{(n)}-\beta_W^{\star}\big) , \mathbb{E}[g_W^{(n)}(\mathbf{X}^{(n)},\beta_W^{\star})g_W^{(n)}(\mathbf{X}^{(n)},\beta_W^{\star})^{\top}]^{-1/2} g_W^{(n)}(\mathbf{X}^{(n)},\beta_W^{\star}) \Big) \\
    \leq & \bigg(\bigg( 8+\sum_{k>0} \frac{4^k}{kk!} \bigg)^{1/2} + \sqrt{2}\bigg) \frac{d_1^{3/4} L_W^2}{\min\big\{\Upsilon_W^{(n)},\big(\Upsilon_W^{(n)}\big)^{3/4}\big\}} \frac{M_n^{3C_W+6} }{\sqrt{K_n}} \\
    & +\frac{1}{\big(K_n\Upsilon_W^{(n)}\big)^{1/2}} \bigg( \mathbb{E}[ \Vert \mathcal{G}_W^{(n)}(\mathbf{X}^{(n)},\beta_W^{\star}) -  \mathbb{E}[\mathcal{G}_W^{(n)}(\mathbf{X}^{(n)},\beta_W^{\star})]\Vert^2 ]^{1/2} \mathbb{E}\big[ \Vert \hat{\beta}_W^{(n)} - \beta_W^{\star} \Vert^2 ]^{1/2} \\
    & \qquad \qquad+ \frac{d_1^2}{20} K_n \binom{M_n}{2}  L_W^3 M_n^{3C_W} \mathbb{E}\big[ \Vert \hat{\beta}_W^{(n)}-\beta_W^{\star}\Vert^2 \big] \bigg) \\
    &  +\Vert \mathbb{E}[g_W^{(n)}(\mathbf{X}^{(n)},\beta_W^{\star})g_W^{(n)}(\mathbf{X}^{(n)},\beta_W^{\star})^{\top}]^{-1/2} \Vert \mathbb{E}\big[\Vert g_W^{(n)}(\mathbf{X}^{(n)},\hat{\beta}_W^{(n)}) \Vert \big]
\end{align*}
for all $n$. The result for the between-block parameter 
follows with similar computations.
\end{proof}

\begin{Remark}
    Theorem \ref{theorem_asym_norm_bound} provides non-asymptotic bounds on the distance to normal of the estimator for any fixed $n$; if this is the focus of interest, then as in Remark \ref{remark_standard_error_nonasym},   the mathematical setup of a sequence of random graphs is not needed.
\end{Remark}
The next  assumptions  are used for asymptotical guarantees.

\begin{Assumption}  \label{ass_min_eig_sum}
Assume that there exist $\zeta_W,\zeta_B>0$ independent of $n$ such that $\Upsilon_W^{(n)} \geq \zeta_W$ as well as $\Upsilon_B^{(n)} \geq \zeta_B$
for all $n \in \mathbb{N}$.
\end{Assumption}

\begin{Example}
    In the model where the within-subgraphs are standard Bernoulli random graphs $\mathrm{Bern}(\alpha_W)$ and the between-subgraphs are bipartite Bernoulli random graphs $\mathrm{Bern}(\alpha_B)$ with $\alpha_W, \alpha_B \in (0,1)$ we can choose $\zeta_W = \alpha_W(1-\alpha_W)$ and $\zeta_B=\alpha_B(1-\alpha_B)$.
\end{Example}

\begin{Assumption} \label{ass_conv_prob_remainder}
    Assume that the random vectors 
    \begin{align*}
         \mathbb{E}[g_W^{(n)}(\mathbf{X}^{(n)},\beta_W^{\star})g_W^{(n)}(\mathbf{X}^{(n)},\beta_W^{\star})^{\top}]^{-1/2} g_W^{(n)}(\mathbf{X}^{(n)},\hat{\beta}_W^{(n)})
    \end{align*}
    and 
    \begin{align*}
        \mathbb{E}[g_B^{(n)}(\mathbf{X}^{(n)},\beta_B^{\star})g_B^{(n)}(\mathbf{X}^{(n)},\beta_B^{\star})^{\top}]^{-1/2} g_B^{(n)}(\mathbf{X}^{(n)},\hat{\beta}_B^{(n)})
    \end{align*}
    converge to $0$ in probability as $n \rightarrow \infty$.
\end{Assumption}

\begin{Theorem} \label{theorem_asym_norm_conv}
    Assume that Assumptions \ref{ass_compact_par}, \ref{ass_growth_statis}, \ref{ass_min_eig_statis}, \ref{ass_min_eig_sum} and \ref{ass_conv_prob_remainder} hold and let $\hat{\beta}_W^{(n)}$, $\hat{\beta}_B^{(n)}$ be as in Definition \ref{definition_stein_est_conv_analysis}. Then, if 
    \begin{align*}
     \frac{1}{\sqrt{K_n}}  \exp(2R_WL_WM_n^{C_W}) M_n^{14+5C_W}  \rightarrow 0 \qquad \text{and} \qquad \frac{1}{\sqrt{K_n}} \exp(2R_BL_B M_n^{C_B}) M_n^{14+5C_B} \rightarrow 0
    \end{align*}
    as $ n \rightarrow \infty $, we have
    \begin{align*}
        Q_W^{(n)} \big( \hat{\beta}_W^{(n)}-\beta_W^{\star}\big) \xrightarrow{D} N(0,I_{d_1}) \qquad \text{and} \qquad Q_B^{(n)} \big( \hat{\beta}_B^{(n)}-\beta_B^{\star}\big) \xrightarrow{D} N(0,I_{d_2})
    \end{align*}
    as $n \rightarrow \infty$, where $\xrightarrow{D}$ denotes convergence in distribution.
\end{Theorem}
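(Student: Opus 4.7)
The plan is to apply Theorem~\ref{theorem_asym_norm_bound} directly and verify that, under the hypotheses listed, each of the three terms appearing in its Wasserstein-$1$ bound tends to $0$ as $n\to\infty$. Since convergence of $d_{W_1}$ to $0$ implies convergence in distribution, this yields the CLT for both $Q_W^{(n)}(\hat{\beta}_W^{(n)}-\beta_W^{\star})$ and $Q_B^{(n)}(\hat{\beta}_B^{(n)}-\beta_B^{\star})$. I focus on the within-block case; the between-block case is identical after replacing $\binom{M_n}{2}$ by $M_n^2$ and the subscripts $W$ by $B$.

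For the first (dominant Stein-method) term of order $d_1^{3/4}L_W^2 M_n^{3C_W+6}/\bigl(\min\{\Upsilon_W^{(n)},(\Upsilon_W^{(n)})^{3/4}\}\sqrt{K_n}\bigr)$, Assumption~\ref{ass_min_eig_sum} gives $\Upsilon_W^{(n)}\geq\zeta_W$ uniformly in $n$, so the term simplifies to $O(M_n^{3C_W+6}/\sqrt{K_n})$; since $3C_W+6\leq 14+5C_W$ and $\exp(2R_WL_WM_n^{C_W})\geq 1$, the stated growth condition forces this to vanish.

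For the Taylor remainder terms, two ingredients are needed. First, by the LERGM independence structure, $\mathcal{G}_W^{(n)}(\mathbf{X}^{(n)},\beta_W^{\star})$ is a sum of $K_n$ independent block contributions, each with Frobenius norm controlled by Assumption~\ref{ass_growth_statis}; hence $\mathbb{E}[\Vert \mathcal{G}_W^{(n)}-\mathbb{E}\mathcal{G}_W^{(n)}\Vert^2]^{1/2}$ grows only polynomially in $M_n$ and as $\sqrt{K_n}$. Second, Assumption~\ref{ass_compact_par} bounds $\Vert\hat{\beta}_W^{(n)}-\beta_W^{\star}\Vert$ uniformly, so one can bound $\mathbb{E}[\Vert \hat{\beta}_W^{(n)}-\beta_W^{\star}\Vert^k]$ for $k=2,4$ by combining the tail estimate \eqref{eq:epsilonbound} with the layer-cake representation, exactly as illustrated in the remark preceding the present theorem. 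Multiplying these pieces and dividing by $\sqrt{K_n\Upsilon_W^{(n)}}$ yields a bound dominated by $M_n^{a}\exp(2R_WL_WM_n^{C_W})/\sqrt{K_n}$ for some $a\leq 14+5C_W$ (with extra logarithmic factors in $K_n$ that are absorbed into the polynomial growth), and this vanishes by assumption.

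Finally, for the third term $\Vert\mathbb{E}[g_W^{(n)}(g_W^{(n)})^{\top}]^{-1/2}\Vert\,\mathbb{E}[\Vert g_W^{(n)}(\mathbf{X}^{(n)},\hat{\beta}_W^{(n)})\Vert]$, Assumption~\ref{ass_conv_prob_remainder} supplies convergence in probability of the normalized residual; combined with the $L^2$-bound coming from compactness of the parameter space and the polynomial-in-$M_n$ bounds on the entries of $g_W^{(n)}$, dominated convergence upgrades this to convergence in $L^1$ and hence the third term vanishes. I expect this last upgrade to be the main obstacle: the product $\Vert\mathbb{E}[g g^{\top}]^{-1/2}\Vert\,\mathbb{E}[\Vert g\Vert]$ as written in the bound is looser than the intrinsic object $\mathbb{E}[\Vert\mathbb{E}[gg^{\top}]^{-1/2}g\Vert]$ that Assumption~\ref{ass_conv_prob_remainder} naturally controls, so a careful uniform-integrability argument, or a minor refinement of the proof of Theorem~\ref{theorem_asym_norm_bound} moving the expectation inside the norm, is required. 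Once the three contributions are simultaneously shown to vanish, the Wasserstein-$1$ bound converges to $0$ and the CLT follows.
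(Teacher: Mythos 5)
There is a genuine gap: the route you propose---plugging moment bounds into the $d_{W_1}$ estimate of Theorem~\ref{theorem_asym_norm_bound} and showing each term vanishes---cannot be completed with the bounds available, and it is not the route the paper takes. The sticking point is the Taylor-remainder term
$\frac{1}{\sqrt{K_n\Upsilon_W^{(n)}}}\,\frac{d_1^2}{20}K_n\binom{M_n}{2}L_W^3M_n^{3C_W}\,\mathbb{E}\big[\Vert\hat{\beta}_W^{(n)}-\beta_W^{\star}\Vert^4\big]^{1/2}$.
The only tail control you have is \eqref{eq:epsilonbound}, i.e.\ $\mathbb{P}(\Vert\hat{\beta}_W^{(n)}-\beta_W^{\star}\Vert>\epsilon)\lesssim \epsilon^{-1}K_n^{-1/2}\cdot\mathrm{poly}(M_n)$, a \emph{polynomial} tail; any moment obtained from it (by layer-cake or by the truncation $\epsilon_n^4+(2R_W)^4\mathbb{P}(>\epsilon_n)$) decays no faster than $K_n^{-1/2}$, so $\mathbb{E}[\Vert\hat{\beta}_W^{(n)}-\beta_W^{\star}\Vert^4]^{1/2}=O(K_n^{-1/4}\cdot\mathrm{poly})$ at best. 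Multiplying by the prefactor $K_n\binom{M_n}{2}/\sqrt{K_n}=\sqrt{K_n}\binom{M_n}{2}$ leaves a factor of order $K_n^{1/4}$, which diverges; your claim that the product is ``dominated by $M_n^{a}\exp(2R_WL_WM_n^{C_W})/\sqrt{K_n}$'' is therefore false. A second, independent problem is the last term $\Vert\mathbb{E}[g_W^{(n)}(g_W^{(n)})^{\top}]^{-1/2}\Vert\,\mathbb{E}[\Vert g_W^{(n)}(\mathbf{X}^{(n)},\hat{\beta}_W^{(n)})\Vert]$: Assumption~\ref{ass_conv_prob_remainder} only gives convergence in probability, and the dominated-convergence upgrade you invoke has no dominating function, since the only pointwise bound on the normalized residual grows like $\sqrt{K_n}M_n^{2+C_W}$; you correctly flag this obstacle but do not resolve it.

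The paper's proof avoids both issues by never passing to expectations. It reuses the algebraic decomposition of $Q_W^{(n)}(\hat{\beta}_W^{(n)}-\beta_W^{\star})$ from the proof of Theorem~\ref{theorem_asym_norm_bound}, sends the leading term to $N(0,I_{d_1})$ via Lemma~\ref{lemma_clt_est_eq} (here Assumption~\ref{ass_min_eig_sum} and the growth condition are used, as in your first paragraph, which is fine), and shows each remainder converges to zero \emph{in probability}: the shell argument of Theorem~\ref{theorem_standard_error} gives tightness of $\sqrt{K_n}\Vert\hat{\beta}_W^{(n)}-\beta_W^{\star}\Vert/(\exp(R_WL_WM_n^{C_W})M_n^{5+C_W})$, hence $\Vert\hat{\beta}_W^{(n)}-\beta_W^{\star}\Vert^2=O_P(K_n^{-1}\exp(2R_WL_WM_n^{C_W})M_n^{10+2C_W})$ --- a rate of $K_n^{-1}$ in probability, versus only $K_n^{-1/2}$ in expectation --- which makes the Taylor remainder $O_P(\exp(2R_WL_WM_n^{C_W})M_n^{12+5C_W}/\sqrt{K_n})\to 0$; a Markov/variance bound exploiting block independence handles $\mathcal{G}_W^{(n)}-\mathbb{E}[\mathcal{G}_W^{(n)}]$; Assumption~\ref{ass_conv_prob_remainder} is then used exactly in the form in which it is stated; and Slutzky's lemma concludes. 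To repair your argument you would need to abandon the $d_{W_1}$ bound as the vehicle and switch to this in-probability/Slutsky scheme, or else prove a much sharper (e.g.\ exponential) concentration inequality for $\hat{\beta}_W^{(n)}$ than Theorem~\ref{theorem_standard_error} provides.
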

\begin{proof}
We start with the within-block edges. We define 
\begin{align*}
    S_{W,j}= \bigg\{ \beta_W \in B_W \, \vert \, 2^{j-1} < \frac{\sqrt{K_n}\Vert \hat{\beta}_W^{(n)} - \beta_W^{\star} \Vert}{\exp(R_WL_WM_n^{C_W}) M_n^{3+C_W}} \leq 2^j \bigg\} , \qquad j \in \mathbb{Z}.
\end{align*}
It follows as in  the proof of Theorem \ref{theorem_standard_error} that for any $Z \in \mathbb{Z}$
\begin{align*}
    \mathbb{P}\bigg( \frac{\sqrt{K_n}\Vert \hat{\beta}_W^{(n)} - \beta_W^{\star} \Vert}{\exp(R_WL_WM_n^{C_W}) M_n^{5+C_W}} > 2^Z \bigg) \leq  \frac{C }{2^Z}
\end{align*}
for a constant $C  >0$ independent of $n$ and $Z$. Since $\lim_{Z \rightarrow \infty}    \frac{C}{2^Z}= 0$, this implies that the sequence 
\begin{align} \label{proof_as_norm_tight_seq}
    \bigg\{\frac{\sqrt{K_n}\Vert \hat{\beta}_W^{(n)} - \beta_W^{\star} \Vert}{\exp(R_WL_WM_n^{C_W}) M_n^{5+C_W}} , \, n \geq 1 \bigg\}
\end{align}
is tight. Moreover,  by Markov's inequality,   for any $\alpha>0$

{\allowdisplaybreaks
\begin{align*}
    &\mathbb{P}\big( \Vert  \mathbb{E}[\mathcal{G}_W^{(n)}(\mathbf{X}^{(n)},\beta_W^{\star})] - \mathcal{G}_W^{(n)}(\mathbf{X}^{(n)},\beta_W^{\star}) \Vert^2  \geq \alpha \big) \\
    & \leq \mathbb{P}\big( \Vert \mathbb{E}[\mathcal{G}_W^{(n)}(\mathbf{X}^{(n)},\beta_W^{\star})] - \mathcal{G}_W^{(n)}(\mathbf{X}^{(n)},\beta_W^{\star}) \Vert_F^2 \geq \alpha \big) \\
    & \leq \frac{1}{\alpha} \sum_{1 \leq i,j \leq d_1} \mathrm{Var}\big[ [\mathcal{G}_W^{(n)}(\mathbf{X}^{(n)},\beta_W^{\star})]_{i,j} \big] \\
    &= \frac{1}{\alpha } \sum_{1 \leq i,j \leq d_1} \sum_{1 \leq k \leq K_n} \mathrm{Var}\bigg[ \sum_{m \in E_{k,k}^{(n)}} \sigma'(\langle \beta_W^{\star},  \Delta_m s_{k,k}^{(n)}(\mathbf{X}_{k,k}^{(n)}) \rangle)  \big[\Delta_m s_{k,k}^{(n)}(\mathbf{X}_{k,k}^{(n)}) \big]_i \big[\Delta_m s_{k,k}^{(n)}(\mathbf{X}_{k,k}^{(n)})\big]_j  \bigg] \\
    &\leq \frac{1}{\alpha } \sum_{1 \leq i,j \leq d_1} \sum_{1 \leq k \leq K_n} \mathbb{E}\bigg[ \bigg(\sum_{m \in E_{k,k}^{(n)}} \big\vert \sigma'(\langle \beta_W^{\star},  \Delta_m s_{k,k}^{(n)}(\mathbf{X}_{k,k}^{(n)}) \rangle)  \big[\Delta_m s_{k,k}^{(n)}(\mathbf{X}_{k,k}^{(n)}) \big]_i \big[\Delta_m s_{k,k}^{(n)}(\mathbf{X}_{k,k}^{(n)})\big]_j \big\vert \bigg)^2 \bigg] \\
    & \leq \frac{1}{\alpha } \sum_{1 \leq i,j \leq d_1} \sum_{1 \leq k \leq K_n} \mathbb{E}\bigg[ \bigg( \binom{M_n}{2} \frac{1}{4} L_WM_n^{C_W} \bigg)^2 \bigg] \\
    & \leq \frac{K_n d_1^2  \big( M_n^2  L_W M_n^{C_W} \big)^2 }{\alpha };
\end{align*}}
 we used that $\sigma'(t) \leq 1/4$. 
This implies that we also have
\begin{align*}
     &\mathbb{P}\bigg( \bigg\Vert  \frac{\exp(R_WL_WM_n^{C_W}) M_n^{5+C_W}}{K_n}\mathbb{E}[\mathcal{G}_W^{(n)}(\mathbf{X}^{(n)},\beta_W^{\star})] \\
     & \qquad \qquad \qquad \qquad - \frac{\exp(R_WL_WM_n^{C_W}) M_n^{5+C_W}}{K_n}\mathcal{G}_W^{(n)}(\mathbf{X}^{(n)},\beta_W^{\star}) \bigg\Vert^2  \geq \alpha \bigg) \\
     & \leq  \frac{ \exp(2R_WL_WM_n^{C_W}) L_W^2 M_n^{14+4C_W} d_1^2  }{\alpha K_n }
\end{align*}
for any $\alpha>0$. The term above converges to $0$ under the assumptions  of the theorem. Thus, 
\begin{align} \label{conv_prob_proof_asymp_norm_conv}
    \bigg\Vert \frac{\exp(R_WL_WM_n^{C_W}) M_n^{5+C_W}}{K_n} \big( \mathcal{G}_W^{(n)}(\mathbf{X}^{(n)},\beta_W^{\star}) - \mathbb{E}[\mathcal{G}_W^{(n)}(\mathbf{X}^{(n)},\beta_W^{\star})] \big) \bigg\Vert^2 \xrightarrow{\mathbb{P}} 0
\end{align}
as $n \rightarrow \infty$, where $\xrightarrow{\mathbb{P}}$ denotes convergence in probability. As in in the proof of Theorem \ref{theorem_asym_norm_bound}, we write 
\begin{align*}
     Q_W^{(n)} \big( \hat{\beta}_W^{(n)}&-\beta_W^{\star}\big) = -\mathbb{E}[g_W^{(n)}(\mathbf{X}^{(n)},\beta_W^{\star})g_W^{(n)}(\mathbf{X}^{(n)},\beta_W^{\star})^{\top}]^{-1/2} \bigg( g_W^{(n)}(\mathbf{X}^{(n)},\beta_W^{\star}) \\
    &+ \big(\mathcal{G}_W^{(n)}(\mathbf{X}^{(n)},\beta_W^{\star}) - \mathbb{E}[\mathcal{G}_W^{(n)}(\mathbf{X}^{(n)},\beta_W^{\star})]  \big)(\hat{\beta}_W^{(n)} - \beta_W^{\star}) \\
    & +\sum_{1 \leq i,j \leq d_1} \sum_{1\leq k \leq K_n} \sum_{m \in E_{k,k}^{(n)}} \int_0^1 (1-t) \sigma'' ( \langle \beta_W^{\star} + t(\hat{\beta}_W^{(n)} - \beta_W^{\star}) , \Delta_m s_{k,k}^{(n)}(\mathbf{X}_{k,k}^{(n)}) \rangle )dt  \\
    & \qquad \times \big[\Delta_m s_{k,k}^{(n)}(\mathbf{X}_{k,k}^{(n)}) \big]_i \big[\Delta_m s_{k,k}^{(n)}(\mathbf{X}_{k,k}^{(n)}) \big]_j \Delta_m s_{k,k}^{(n)}(\mathbf{X}_{k,k}^{(n)})  \big[ \hat{\beta}_W^{(n)}-\beta_W^{\star}\big]_i \big[ \hat{\beta}_W^{(n)}-\beta_W^{\star}\big]_j \bigg) \\
     & +\mathbb{E}[g_W^{(n)}(\mathbf{X}^{(n)},\beta_W^{\star})g_W^{(n)}(\mathbf{X}^{(n)},\beta_W^{\star})^{\top}]^{-1/2} g_W^{(n)}(\mathbf{X}^{(n)},\hat{\beta}_W^{(n)}).
\end{align*}
We now investigate term by term. From Lemma \ref{lemma_clt_est_eq} we obtain that
\begin{align*}
    -\mathbb{E}[g_W^{(n)}(\mathbf{X}^{(n)},\beta_W^{\star})g_W^{(n)}(\mathbf{X}^{(n)},\beta_W^{\star})^{\top}]^{-1/2}  g_W^{(n)}(\mathbf{X}^{(n)},\beta_W^{\star})  \xrightarrow{D} N(0,I_{d_1})
\end{align*}
as $n \rightarrow \infty$. Moreover, by Assumption \ref{ass_min_eig_sum} and \eqref{weylbound},
\begin{align*}
    & \big \Vert \mathbb{E}[g_W^{(n)}(\mathbf{X}^{(n)},\beta_W^{\star})g_W^{(n)}(\mathbf{X}^{(n)},\beta_W^{\star})^{\top}]^{-1/2} \big(\mathcal{G}_W^{(n)}(\mathbf{X}^{(n)},\beta_W^{\star}) - \mathbb{E}[\mathcal{G}_W^{(n)}(\mathbf{X}^{(n)},\beta_W^{\star})]  \big)(\hat{\beta}_W^{(n)} - \beta_W^{\star}) \big\Vert \\
    & \leq \frac{\sqrt{K_n}}{\sqrt{K_n \zeta_W}} \bigg\Vert \frac{\exp(R_WL_WM_n^{C_W}) M_n^{5+C_W}}{K_n} \big( \mathcal{G}_W^{(n)}(\mathbf{X}^{(n)},\beta_W^{\star}) - \mathbb{E}[\mathcal{G}_W^{(n)}(\mathbf{X}^{(n)},\beta_W^{\star})] \big) \bigg\Vert \\
    & \qquad \qquad \qquad \qquad \qquad \qquad \qquad \times \frac{\sqrt{K_n}}{\exp(R_WL_WM_n^{C_W}) M_n^{5+C_W}} \Vert \hat{\beta}_W^{(n)} - \beta_W^{\star} \Vert  \qquad \xrightarrow{\mathbb{P}} 0 
\end{align*}
as $n \rightarrow \infty$ where we used \eqref{conv_prob_proof_asymp_norm_conv} and the tightness of \eqref{proof_as_norm_tight_seq}. Finally, with similar calculations as in the proof of Theorem \ref{theorem_asym_norm_bound},
\begin{align*}
    &\bigg\Vert \mathbb{E}[g_W^{(n)}(\mathbf{X}^{(n)},\beta_W^{\star})g_W^{(n)}(\mathbf{X}^{(n)},\beta_W^{\star})^{\top}]^{-1/2} \\
    & \qquad \times \sum_{1 \leq i,j \leq d_1} \sum_{1\leq k \leq K_n} \sum_{m \in E_{k,k}^{(n)}} \int_0^1 (1-t) \sigma'' ( \langle \beta_W^{\star} + t(\hat{\beta}_W^{(n)} - \beta_W^{\star}) , \Delta_m s_{k,k}^{(n)}(\mathbf{X}_{k,k}^{(n)}) \rangle )dt  \\
    & \qquad \qquad \times \big[\Delta_m s_{k,k}^{(n)}(\mathbf{X}_{k,k}^{(n)}) \big]_i \big[\Delta_m s_{k,k}^{(n)}(\mathbf{X}_{k,k}^{(n)}) \big]_j \Delta_m s_{k,k}^{(n)}(\mathbf{X}_{k,k}^{(n)})  \big[ \hat{\beta}_W^{(n)}-\beta_W^{\star}\big]_i \big[ \hat{\beta}_W^{(n)}-\beta_W^{\star}\big]_j \bigg\Vert \\
    &\leq  \frac{\sqrt{K_n}}{\sqrt{ \zeta_W}} \frac{d_1^2}{20} \binom{M_n}{2}  L_W^3 M_n^{3C_W}  \Vert \hat{\beta}_W^{(n)}-\beta_W^{\star}\Vert^2 \\
    & \leq  \frac{\exp(2R_WL_WM_n^{C_W}) M_n^{12+5C_W}}{\sqrt{ K_n \zeta_W}} \frac{d_1^2 L_W^3 }{20} \bigg(\frac{\sqrt{K_n}}{\exp(R_WL_WM_n^{C_W}) M_n^{5+C_W}} \bigg)^2  \Vert \hat{\beta}_W^{(n)}-\beta_W^{\star}\Vert^2  \\
    & \xrightarrow{\mathbb{P}} 0 
\end{align*}
using again the tightness of \eqref{proof_as_norm_tight_seq} and assumptions made in the statement of the theorem. Slutsky's lemma together with Assmption \ref{ass_conv_prob_remainder} then gives the result. The reasoning for the between-block edges is exactly the same.
\end{proof}

\begin{Remark}
    This section provides theoretical guarantees under explicit conditions which could be checked in principle.   
    However, verifying Assumptions \ref{ass_min_eig_statis}, \ref{ass_min_eig_sum} and \ref{ass_conv_prob_remainder} for instance for the models from Example  \ref{example_degree_seq} seems to be challenging; this also applies to the corresponding conditions in \cite{stewart2026rates}. If the sequence $M_n$ is bounded and if the number of different statistics is finite, one can verify Assumptions \ref{ass_min_eig_statis} and \ref{ass_min_eig_sum} computationally. Moreover, in the case where $n$ corresponds to the number of blocks and all blocks have the same size and statistics, Assumption \ref{ass_conv_prob_remainder} is just the weak law of large numbers. However, our Assumptions \ref{assumption_model} and \ref{assumption_observation} in Section \ref{section_steins_metod_of_moments} regarding existence and uniqueness of the Stein estimator for a given network $\mathbbm{x} \in \mathbb{X}(\mathbb{A})$ are easier to check.
\end{Remark}

\begin{Remark}
    In \cite{stewart2026rates}, bounds on the distance to normal are obtained for maximum likelihood estimators, under slightly different but related assumptions. Assumptions 1 and 3 in \cite{stewart2026rates} quantify the growth of the summary statistics with respect to number of vertices in each block, partly in relation to a function of the largest eigenvalue of the Fisher information matrix. Assumptions 2 and 4 
    depend on a choice of $\epsilon >0$ which is thought of as small. The normal approximation, Theorem 2.5, in \cite{stewart2026rates} assumes again that the dimension of the parameter space is at least $\log (1/ \epsilon),$ an assumption which is not required for our results and which may not always be natural. 
\end{Remark}

\clearpage
\appendix

\section{Auxiliary results for Section \ref{section_standad_errors}}

We give a result from \cite{chazottes2007concentration}. Let $\mathcal{N}$ be a finite set and $X=(X_1,\ldots,X_n)$ be a random vector where each $X_i$ takes values in $\mathcal{N}$. Also, let $\mathscr{F}_{i}$, $i=1, \ldots, n$ be the sigma-fields generated by $X_1,\ldots, X_i$ and $\mathscr{F}_0$ the trivial sigma-field. Moreover, let $\mathbb{Q}_{i;y_1,y_2}^x$ where $y_1,y_2 \in \mathcal{N}$ and $x = (x_1, \ldots , x_n) \in \mathcal{N}^n $ for $i=1,\ldots,n$, be any coupling of the conditional distributions of $ X \, \vert \, X_1=x_1, \ldots, X_{i-1}=x_{i-1},x_i = y_1$ and $X \, \vert \, X_1=x_1, \ldots, X_{i-1}=x_{i-1},x_i = y_2$. Therefore, $\mathbb{Q}_{i;y_1,y_2}^x$ is probability measure on $\mathcal{N}^{n-i}$. Now define the upper triangular matrix $\mathcal{D}^x \in \mathbb{R}^{n \times n} $ through 
\begin{align} \label{coupling_matrix}
    \begin{split}
    \mathcal{D}_{i,i}^x& = 1 , \qquad  i= 1, \ldots,n, \\
    \mathcal{D}_{i,i+j}^x &= \max_{y_1,y_2 \in \mathcal{N}} \mathbb{Q}_{i;y_1,y_2}^x(X_{i+j}^{(1)} \neq X_{i+j}^{(2)} ), \qquad i= 1, \ldots,n, \text{ and } j=1,\ldots,n-i,
    \end{split}
\end{align}
where $(X^{(1)}, X^{(2)} )$ is distributed according to $\mathbb{Q}_{i;y_1,y_2}^x$. Note that $\mathcal{D}^x$ is a random matrix whose entries are all positive. We define the deterministic matrix $\overline{\mathcal{D}} \in \mathbb{R}^{n \times n}$ through $\overline{\mathcal{D}}_{i,j}= \sup_{x \in \mathcal{N}^n} \mathcal{D}_{i,j}^x $.

For a function $g:\mathcal{N} \rightarrow \mathbb{R}$ we define  the vector $\mathbb{V}(g) \in \mathbb{R}^n$ by letting the $i$th entry of $\mathbb{V}(g)$ be the {\it variation of $g$ at $i$}  given by 
\begin{align*}
    \mathbb{V}(g)[i] = \sup_{\substack{x_j = y_j \\ j \neq i}} \vert g(x) - g(y) \vert.
\end{align*}
\begin{Theorem}[{\cite[Theorem 1]{chazottes2007concentration}}] \label{general_concentration_ineq}
    Let  $g:\mathcal{N} \rightarrow \mathbb{R}$. If $ \Vert \overline{\mathcal{D}} \Vert < \infty $ then for any $\alpha>0$ we have the inequality
    \begin{align*}
        \mathbb{P}( \vert g(X) - \mathbb{E}[g(X)] \vert \geq \alpha   ) \leq \exp\bigg( - \frac{2\alpha^2}{\Vert \overline{\mathcal{D}} \Vert^2 \Vert \mathbb{V}(g)\Vert^2} \bigg).
    \end{align*}
\end{Theorem}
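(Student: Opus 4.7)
The plan is to use the martingale approach to concentration for weakly dependent random variables. Concretely, I would decompose $g(X) - \mathbb{E}[g(X)]$ as a telescoping sum of martingale differences along the filtration $(\mathscr{F}_i)$, control the conditional range of each martingale difference via the coupling matrix $\overline{\mathcal{D}}$ and the variation vector $\mathbb{V}(g)$, and then invoke an Azuma--Hoeffding inequality. Sub-multiplicativity of the operator norm would turn $\|\overline{\mathcal{D}}\, \mathbb{V}(g)\|^2$ into $\|\overline{\mathcal{D}}\|^2 \|\mathbb{V}(g)\|^2$ and thereby produce the quadratic form claimed in the exponent.

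More concretely, I would set $D_i := \mathbb{E}[g(X) \mid \mathscr{F}_i] - \mathbb{E}[g(X) \mid \mathscr{F}_{i-1}]$ so that $g(X) - \mathbb{E}[g(X)] = \sum_{i=1}^n D_i$ is a martingale with respect to $(\mathscr{F}_i)$. To bound each $D_i$, I would fix a history $(x_1,\ldots,x_{i-1})$ and study the oscillation of $h_i(y) := \mathbb{E}[g(X) \mid X_1 = x_1, \ldots, X_{i-1} = x_{i-1}, X_i = y]$. For any $y_1, y_2 \in \mathcal{N}$, the coupling $\mathbb{Q}^{x}_{i; y_1, y_2}$ produces a pair $(X^{(1)}, X^{(2)})$ with the correct conditional marginals, and I would estimate
\begin{align*}
|h_i(y_1) - h_i(y_2)|
&\le \mathbb{E}_{\mathbb{Q}}\bigl[|g(X^{(1)}) - g(X^{(2)})|\bigr] \\
&\le \sum_{j=i}^n \mathbb{V}(g)[j] \, \mathbb{Q}\bigl(X^{(1)}_j \neq X^{(2)}_j\bigr) \\
&\le \sum_{j=i}^n \overline{\mathcal{D}}_{i,j} \, \mathbb{V}(g)[j] =: c_i,
\end{align*}
where the second line uses the telescoping bound $|g(u) - g(v)| \le \sum_{j: u_j \neq v_j} \mathbb{V}(g)[j]$ obtained by swapping disagreeing coordinates one at a time, and the fact that $X^{(1)}$ and $X^{(2)}$ agree on coordinates $1, \ldots, i-1$. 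Writing $D_i = h_i(X_i) - \mathbb{E}[h_i(X_i) \mid \mathscr{F}_{i-1}]$, I conclude that the conditional range of $D_i$ given $\mathscr{F}_{i-1}$ is at most $c_i$.

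The final step is Azuma--Hoeffding applied with these conditional range bounds, giving an exponential bound with $\sum_{i=1}^n c_i^2$ in the denominator; the identity $\sum_i c_i^2 = \|\overline{\mathcal{D}}\, \mathbb{V}(g)\|^2$ together with the operator-norm inequality $\|\overline{\mathcal{D}}\, \mathbb{V}(g)\|^2 \le \|\overline{\mathcal{D}}\|^2 \|\mathbb{V}(g)\|^2$ then produces the stated quadratic form in the exponent. The main obstacle is the coupling step: one must pass from the $x$-dependent coupling matrix $\mathcal{D}^x$ to the deterministic envelope $\overline{\mathcal{D}}$ uniformly over histories, and ensure that the pointwise telescoping bound on $g$ survives integration against $\mathbb{Q}$ coordinate by coordinate. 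Once this uniform coupling estimate is in hand, the remainder of the argument is the standard McDiarmid-type route adapted to the non-product setting.
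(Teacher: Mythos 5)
The paper does not actually prove this statement---it is imported verbatim from Chazottes et al.\ \cite{chazottes2007concentration}---and your martingale--coupling--Azuma argument is precisely the standard proof of that cited result: the Doob martingale decomposition along $(\mathscr{F}_i)$, the coordinate-by-coordinate telescoping bound on $|g(u)-g(v)|$ integrated against the coupling $\mathbb{Q}^x_{i;y_1,y_2}$ to get $|h_i(y_1)-h_i(y_2)|\le \sum_{j\ge i}\overline{\mathcal{D}}_{i,j}\mathbb{V}(g)[j]$, the uniform passage from $\mathcal{D}^x$ to $\overline{\mathcal{D}}$, and the final operator-norm step are all sound. The one caveat is that the two-sided Azuma--Hoeffding bound you invoke carries a prefactor of $2$, which the displayed statement omits but which the paper implicitly restores when it actually applies the inequality (the $4\exp(-4x^2)$ integrand in the proof of Lemma \ref{lemma_concentration_ineq_lergm}).
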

In Lemma \ref{lemma_concentration_ineq_lergm} we apply Theorem \ref{general_concentration_ineq} to the random graph $\mathbf{X}^{(n)} \sim \mathrm{LERGM}(\beta^{\star})$ and therefore have $\mathcal{N}= \{0,1\}$. Before we derive the uniform concentration inequality, we work out a combinatorial upper bound on the variation of the function $g$ to which we will apply Theorem  \ref{general_concentration_ineq}.
\begin{Lemma} \label{lemma_variation}
    Suppose that Assumption \ref{ass_growth_statis} is satisfied. We have that for $\beta_1,\beta_2 \in \mathbb{R}^{d_1}$
    \begin{align*}
        &\Vert \mathbb{V}\big(G_W^{(n)}(\mathbbm{x}^{(n)},\beta_1) - \mathbb{E}[G_W^{(n)}(\mathbf{X}^{(n)},\beta_1)] - (G_W^{(n)}(\mathbbm{x}^{(n)},\beta_2) - \mathbb{E}[G_W^{(n)}(\mathbf{X}^{(n)},\beta_2)]) \big) \Vert^2 \\
        & \qquad \qquad \leq 2K_n\binom{M_n}{2} \bigg( 4 \binom{M_n}{2} \Vert \beta_1 - \beta_2 \Vert L_W M_n^{C_W}  \bigg)^2
    \end{align*}
    for each $n \in \mathbb{N}$, where the variation is  understood with respect to the argument $\mathbbm{x}^{(n)} \in \mathbb{X}^{(n)}$ with an arbitrary ordering of the edges. Moreover, for $\beta_1,\beta_2 \in \mathbb{R}^{d_2}$,  and $n \in \mathbb{N}$,
    \begin{align*}
        &\Vert \mathbb{V}\big(G_B^{(n)}(\mathbbm{x}^{(n)},\beta_1) - \mathbb{E}[G_B^{(n)}(\mathbf{X}^{(n)},\beta_1)] - (G_B^{(n)}(\mathbbm{x}^{(n)},\beta_2) - \mathbb{E}[G_B^{(n)}(\mathbf{X}^{(n)},\beta_2)]) \big) \Vert^2 \\
        & \qquad \qquad \leq 2K_nM_n^2 \big( 4 M_n^2 \Vert \beta_1 - \beta_2 \Vert L_B M_n^{C_B}  \big)^2. 
    \end{align*}

\end{Lemma}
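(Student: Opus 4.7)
The key observation is that $\mathbb{E}[G_W^{(n)}(\mathbf{X}^{(n)},\beta_i)]$ is a deterministic scalar depending on $\beta_i$ but not on the graph argument, so the two expectation terms cancel in any computation of the form $g(\mathbbm{x}) - g(\mathbbm{y})$. Hence it suffices to bound the variation of
\begin{align*}
    h(\mathbbm{x}^{(n)}) := G_W^{(n)}(\mathbbm{x}^{(n)},\beta_1) - G_W^{(n)}(\mathbbm{x}^{(n)},\beta_2) = \sum_{1\le k \le K_n}\sum_{m\in E_{k,k}^{(n)}} \bigl( F_m(\mathbbm{x}^{(n)},\beta_1) - F_m(\mathbbm{x}^{(n)},\beta_2)\bigr),
\end{align*}
where $F_m(\mathbbm{x}^{(n)},\beta) = \Sigma(\langle \beta, \Delta_m s_{k,k}^{(n)}(\mathbbm{x}_{k,k}^{(n)})\rangle) + \langle \beta, s_{k,k}^{(n)}(\Diamond_m^0 \mathbbm{x}_{k,k}^{(n)}) - s_{k,k}^{(n)}(\mathbbm{x}_{k,k}^{(n)})\rangle$. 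Because $F_m$ depends only on $\mathbbm{x}_{k,k}^{(n)}$, the variation of $h$ at a between-block edge is zero, so only within-block edges contribute to $\Vert \mathbb{V}(h)\Vert^2$.

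\textbf{Per-term Lipschitz bound.} Since $\Sigma'(t) = \sigma(t) \in (0,1)$, the map $\Sigma$ is $1$-Lipschitz, so by Cauchy--Schwarz
\begin{align*}
    |F_m(\mathbbm{x}^{(n)},\beta_1) - F_m(\mathbbm{x}^{(n)},\beta_2)| \le \|\beta_1 - \beta_2\|\bigl(\|\Delta_m s_{k,k}^{(n)}(\mathbbm{x}_{k,k}^{(n)})\| + \|s_{k,k}^{(n)}(\mathbbm{x}_{k,k}^{(n)}) - s_{k,k}^{(n)}(\Diamond_m^0\mathbbm{x}_{k,k}^{(n)})\|\bigr).
\end{align*}
Assumption \ref{ass_growth_statis} and the consequence stated right after it bound each norm on the right by $L_W M_n^{C_W}$, so $|F_m(\mathbbm{x}^{(n)},\beta_1) - F_m(\mathbbm{x}^{(n)},\beta_2)| \le 2\|\beta_1-\beta_2\| L_W M_n^{C_W}$. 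Applying the triangle inequality to compare this quantity at two graphs $\mathbbm{x}^{(n)}$ and $\mathbbm{y}^{(n)}$ yields
\begin{align*}
    \bigl| F_m(\mathbbm{x}^{(n)},\beta_1) - F_m(\mathbbm{x}^{(n)},\beta_2) - F_m(\mathbbm{y}^{(n)},\beta_1) + F_m(\mathbbm{y}^{(n)},\beta_2)\bigr| \le 4\|\beta_1 - \beta_2\| L_W M_n^{C_W}.
\end{align*}

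\textbf{Aggregation over edges.} For a within-block edge $i$ sitting in block $k^\star$, only summands with $m \in E_{k^\star,k^\star}^{(n)}$ can change when flipping $i$. Summing the per-term bound above over $|E_{k^\star,k^\star}^{(n)}|\le \binom{M_n}{2}$ terms gives
\begin{align*}
    \mathbb{V}(h)[i] \le 4\binom{M_n}{2}\|\beta_1 - \beta_2\| L_W M_n^{C_W}.
\end{align*}
Squaring and summing over the at most $K_n\binom{M_n}{2}$ within-block edges yields the claimed bound (with a harmless factor $2$ of slack). The between-block statement is proved verbatim, replacing $\binom{M_n}{2}$ by $|E_{k,l}^{(n)}| = |A_k||A_l| \le M_n^2$, $L_W,C_W$ by $L_B,C_B$, and working block-pair by block-pair.

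\textbf{Expected difficulty.} There is no real obstacle: the proof is bookkeeping. The only point worth being careful about is identifying which edges contribute nonzero variation (only within-block edges for $G_W^{(n)}$, only between-block edges for $G_B^{(n)}$, and within each class only those lying in the same subgraph as the term being varied), together with the observation that $\Sigma$ is $1$-Lipschitz so that $\|\beta_1-\beta_2\|$ is the only $\beta$-dependent quantity that appears.
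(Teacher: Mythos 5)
Your proof is correct and follows essentially the same route as the paper's: identify that only within-block edges in the block containing the flipped edge contribute, use the $1$-Lipschitzness of $\Sigma$ together with Cauchy--Schwarz and Assumption \ref{ass_growth_statis} to get the per-summand bound $4\Vert\beta_1-\beta_2\Vert L_W M_n^{C_W}$, and aggregate over the at most $\binom{M_n}{2}$ summands per component and $K_n\binom{M_n}{2}$ nonzero components. The only cosmetic difference is that you discard the (constant) expectation terms outright and treat the factor $2$ as slack, whereas the paper carries a separate, identically bounded variation term for the expectations, which is where its factor $2$ originates.
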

\begin{proof}
    We start with the within-block edges. First note that
    \begin{align} 
        \Vert \mathbb{V}\big(G_W^{(n)}(\mathbbm{x}^{(n)},\beta_1) &- \mathbb{E}[G_W^{(n)}(\mathbf{X}^{(n)},\beta_1)] - (G_W^{(n)}(\mathbbm{x}^{(n)},\beta_2) - \mathbb{E}[G_W^{(n)}(\mathbf{X}^{(n)},\beta_2)]) \big) \Vert^2
        \nonumber \\
        \leq & \Vert \mathbb{V}\big(G_W^{(n)}(\mathbbm{x}^{(n)},\beta_1) - G_W^{(n)}(\mathbbm{x}^{(n)},\beta_2) \big) \Vert^2 \label{proof_variation_lemma_eq1} \\
        & +  \Vert \mathbb{V}\big( \mathbb{E}[G_W^{(n)}(\mathbf{X}^{(n)},\beta_1)]  - \mathbb{E}[G_W^{(n)}(\mathbf{X}^{(n)},\beta_2)]) \big) \Vert^2.\label{proof_variation_lemma_eq2}
    \end{align}
    We tackle \eqref{proof_variation_lemma_eq1} first. There are at most $K_n M_n(M_n-1)/2$ components of the variation which are not zero. We denote by $d_H(\cdot \, , \, \cdot)$ the Hamming distance on a graph. For any $\mathbbm{x}_{k,k}^{(n)},\mathbbm{y}_{k,k}^{(n)} \in \mathbb{X}_{k,k}^{(n)}$ with $ d_H(\mathbbm{x}_{k,k}^{(n)},\mathbbm{y}_{k,k}^{(n)})=1$ we compute
    \begin{align*}
        & \bigg\vert \sum_{m \in E_{k,k}^{(n)}} \bigg(\Sigma ( \langle \beta_1 , \Delta_m s_{k,k}^{(n)}(\mathbbm{x}_{k,k}^{(n)}) \rangle ) + \langle \beta_1, s_{k,k}^{(n)}(\Diamond_m^0\mathbbm{x}_{k,k}^{(n)}) - s_{k,k}^{(n)}(\mathbbm{x}_{k,k}^{(n)}) \rangle \\
         & \qquad \qquad -  \Sigma ( \langle \beta_2 , \Delta_m s_{k,k}^{(n)}(\mathbbm{x}_{k,k}) \rangle ) - \langle \beta_2, s_{k,k}^{(n)}(\Diamond_m^0\mathbbm{x}_{k,k}^{(n)}) - s_{k,k}^{(n)}(\mathbbm{x}_{k,k}^{(n)}) \rangle \\
         & \qquad \qquad -  \Sigma ( \langle \beta_1 , \Delta_m s_{k,k}^{(n)}(\mathbbm{y}_{k,k}^{(n)}) \rangle ) - \langle \beta_1, s_{k,k}^{(n)}(\Diamond_m^0 \mathbbm{y}_{k,k}^{(n)} - s_{k,k}^{(n)}(\mathbbm{y}_{k,k}^{(n)}) \rangle \\
         & \qquad \qquad + \Sigma ( \langle \beta_2 , \Delta_m s_{k,k}^{(n)}(\mathbbm{y}_{k,k}^{(n)}) \rangle ) + \langle \beta_2, s_{k,k}^{(n)}(\Diamond_m^0\mathbbm{y}_{k,k}^{(n)}) - s_{k,k}^{(n)}(\mathbbm{y}_{k,k}^{(n)}) \rangle  \bigg) \bigg\vert \\ 
         &\leq \sum_{m \in E_{k,k}^{(n)}} \big( \Vert \beta_1 - \beta_2 \Vert \Vert  \Delta_m s_{k,k}^{(n)}(\mathbbm{x}_{k,k}^{(n)}) \Vert +  \Vert \beta_1 - \beta_2 \Vert \Vert  \Delta_m s_{k,k}^{(n)}(\mathbbm{y}_{k,k}^{(n)}) \\
         & \qquad \qquad +  \Vert \beta_1 - \beta_2 \Vert \Vert   s_{k,k}^{(n)}(\Diamond_m^0\mathbbm{x}_{k,k}^{(n)}) - s_{k,k}^{(n)}(\mathbbm{x}_{k,k}^{(n)}) \Vert + \Vert \beta_1 - \beta_2 \Vert \Vert   s_{k,k}^{(n)}(\Diamond_m^0\mathbbm{y}_{k,k}^{(n)}) - s_{k,k}^{(n)}(\mathbbm{y}_{k,k}^{(n)}) \Vert \big) \\
         & \leq  2 \sum_{m \in E_{k,k}^{(n)}}  \Vert \beta_1 - \beta_2 \Vert \bigg( \Vert  \Delta_m s_{k,k}^{(n)}(\mathbbm{x}_{k,k}^{(n)}) \Vert + \Vert  \Delta_m s_{k,k}^{(n)}(\mathbbm{y}_{k,k}^{(n)}) \Vert \bigg) \\
         & \leq 4 \binom{M_n}{2} \Vert \beta_1 - \beta_2 \Vert L_W M_n^{C_W};
    \end{align*}
    each component in $\mathbb{V}\big(G_W^{(n)}(\mathbbm{x}^{(n)},\beta_1) - G_W^{(n)}(\mathbbm{x}^{(n)},\beta_2) \big)$ is bounded by 
    \begin{align*}
        2 M_n (M_n-1) \Vert \beta_1 - \beta_2 \Vert L_W M_n^{C_W}.
    \end{align*}
    Note that we can remove the sum over $k$ as the edge in which $\mathbbm{x}^{(n)}$ and $\mathbbm{y}^{(n)}$ differ  will be part of at most one block; thus the terms corresponding to all other blocks  cancel. Therefore, we have
    \begin{align*}
        \Vert \mathbb{V}\big(G_W^{(n)}(\mathbbm{x}^{(n)},\beta_1) - G_W^{(n)}(\mathbbm{x}^{(n)},\beta_2) \big) \Vert^2 \leq K_n\binom{M_n}{2} \bigg( 4 \binom{M_n}{2} \Vert \beta_1 - \beta_2 \Vert L_W M_n^{C_W}  \bigg)^2.
    \end{align*}
    In the  same manner we find for \eqref{proof_variation_lemma_eq2}
    \begin{align*}
        \Vert \mathbb{V}\big( \mathbb{E}[G_W^{(n)}(\mathbf{X}^{(n)},\beta_1)]  - \mathbb{E}[G_W^{(n)}(\mathbf{X}^{(n)},\beta_2)]) \big) \Vert^2 \leq K_n\binom{M_n}{2} \bigg( 4 \binom{M_n}{2} \Vert \beta_1 - \beta_2 \Vert L_W M_n^{C_W}  \bigg)^2
    \end{align*}
    which gives
    \begin{align*}
        & \Vert \mathbb{V}\big(G_W^{(n)}(\mathbbm{x}^{(n)},\beta_1) - \mathbb{E}[G_W^{(n)}(\mathbf{X}^{(n)},\beta_1)] - (G_W^{(n)}(\mathbbm{x}^{(n)},\beta_2) - \mathbb{E}[G_W^{(n)}(\mathbf{X}^{(n)},\beta_2)]) \big) \Vert^2 \\
         & \qquad \qquad \leq 2K_n\binom{M_n}{2} \bigg( 4 \binom{M_n}{2} \Vert \beta_1 - \beta_2 \Vert L_W M_n^{C_W}  \bigg)^2 . 
    \end{align*}
    For the between-block edges, one can proceed in the exact same way,  with the difference being that there are at most $K_nM_n^2$ non-zero elements in the variations $\mathbb{V}(G_B^{(n)}(\mathbbm{x}^{(n)},\beta_1) - G_B^{(n)}(\mathbbm{x},\beta_2) ) $ and $ \mathbb{V}( \mathbb{E}[G_B^{(n)}(\mathbf{X}^{(n)},\beta_1)]  - \mathbb{E}[G_B^{(n)}(\mathbf{X}^{(n)},\beta_2)]) ) $, where now $\beta_1, \beta_2 \in \mathbb{R}^{d_2}$. Moreover, the number of elements in $E_{k,l}^{(n)}$, $k<l$ is at most equal to $M_n^2$. This gives the bound from the statement of the lemma.
\end{proof}
Next we introduce the Orlicz norm for a real random variable $X$ by 
\begin{align*}
    \Vert X \Vert_{\psi} = \inf\big\{ C>0 \, \vert \, \mathbb{E} [ \psi(\vert X \vert /C)] \leq 1 \big\},
\end{align*}
where $\psi:[0,\infty) \rightarrow [0,\infty)$ is a convex function with $\psi(0)=0$. It is straightforward to see that $\mathbb{E}[\psi(\vert X \Vert /C)] \leq 1$ implies $\Vert X \Vert_{\psi} \leq C$. We will only make use of the function $\psi_2(x) = \exp(x^2)-1$ and denote the corresponding Orlicz norm by $\Vert \cdot \Vert_{\psi_2}$. Now we can prove the following lemma.
\begin{Lemma} \label{lemma_concentration_ineq_lergm}
    Suppose that Assumption \ref{ass_growth_statis} is satisfied.  Then, for each $\delta>0$ and $n \in \mathbb{N}$ there is a constant $C>0$ which is independent of $n$ and $\delta$ such that 
\begin{align*}
    &\mathbb{E} \bigg[ \sup_{\beta_W \in B(\beta_W^{\star},\delta)} \Big\vert G_W^{(n)}(\mathbf{X}^{(n)}, \beta_W^{\star}) - \mathbb{E}[ G_W^{(n)}(\mathbf{X}^{(n)}, \beta_W^{\star})] - (G_W^{(n)}(\mathbf{X}^{(n)}, \beta_W) - \mathbb{E}[ G_W^{(n)}(\mathbf{X}^{(n)}, \beta_W)] ) \Big\vert \bigg] \\
    & \qquad \leq  C \sqrt{K_n}  M_n^{5+C_W}  \delta; \\ 
    &\mathbb{E} \bigg[ \sup_{\beta_B \in B(\beta_B^{\star},\delta)} \Big\vert G_B^{(n)}(\mathbf{X}^{(n)}, \beta_B^{\star}) - \mathbb{E}[ G_B^{(n)}(\mathbf{X}^{(n)}, \beta_B^{\star})] - (G_B^{(n)}(\mathbf{X}^{(n)}, \beta_B) - \mathbb{E}[ G_B^{(n)}(\mathbf{X}^{(n)}, \beta_B)] ) \Big\vert \bigg]  \\
    & \qquad \leq  C \sqrt{K_n}  M_n^{5+C_B}  \delta.
\end{align*}
\end{Lemma}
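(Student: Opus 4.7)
The plan is to combine the concentration inequality of Theorem~\ref{general_concentration_ineq} applied pointwise in $\beta_W$ with a chaining (Dudley-type) maximal inequality over the Euclidean ball. I focus on the within-block bound; the between-block case is structurally identical, with $\binom{M_n}{2}$ replaced by $M_n^2$ and $(L_W,C_W)$ by $(L_B,C_B)$.

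For fixed $\beta_W$, set $Y_{\beta_W}(\mathbbm{x}) := G_W^{(n)}(\mathbbm{x},\beta_W^\star) - G_W^{(n)}(\mathbbm{x},\beta_W) - \mathbb{E}[G_W^{(n)}(\mathbf{X}^{(n)},\beta_W^\star) - G_W^{(n)}(\mathbf{X}^{(n)},\beta_W)]$. The variation is controlled directly by Lemma~\ref{lemma_variation}, giving $\Vert \mathbb{V}(Y_{\beta_W})\Vert^2 \le C\,K_n M_n^{6+2C_W} L_W^2 \Vert \beta_W-\beta_W^\star\Vert^2$. For the coupling matrix $\overline{\mathcal{D}}$ in~\eqref{coupling_matrix}, I order the edges so that all edges of a single subgraph appear consecutively. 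By the LERGM block-independence between subgraphs, the conditional distribution of a later edge $X_j$ is unaffected by an earlier edge $X_i$ sitting in a different subgraph, so the couplings in~\eqref{coupling_matrix} can be chosen identical and $\overline{\mathcal{D}}_{i,j}=0$ across subgraphs. Hence $\overline{\mathcal{D}}$ is block-diagonal with blocks of size at most $\binom{M_n}{2}$, and the trivial bound $\overline{\mathcal{D}}_{i,j}\le 1$ within a block together with $\Vert M\Vert \le \Vert M\Vert_F$ gives $\Vert \overline{\mathcal{D}}\Vert \le C M_n^2$. Plugging into Theorem~\ref{general_concentration_ineq} yields the sub-Gaussian tail
\begin{align*}
\mathbb{P}\big(|Y_{\beta_W}| \ge \alpha\big) \;\le\; \exp\!\Big(-\,c\,\alpha^2\,/\,\big(K_n\,M_n^{10+2C_W}\,L_W^2\,\Vert \beta_W-\beta_W^\star\Vert^2\big)\Big),
\end{align*}
which, via the standard conversion between sub-Gaussian tails and the Orlicz norm $\Vert\cdot\Vert_{\psi_2}$ introduced in the excerpt, translates into $\Vert Y_{\beta_W}\Vert_{\psi_2} \le C' L_W\sqrt{K_n}\,M_n^{5+C_W}\,\Vert \beta_W-\beta_W^\star\Vert$. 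Applied verbatim to the increment indexed by $\beta_W - \beta'_W$, this shows the process $\{Y_{\beta_W}\}$ is $\psi_2$-Lipschitz in $\beta_W$ with Lipschitz constant $\sigma := C' L_W\sqrt{K_n}\,M_n^{5+C_W}$.

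The uniform bound then follows from Dudley's entropy integral for sub-Gaussian processes. The Euclidean ball $B(\beta_W^\star,\delta)\subset \mathbb{R}^{d_1}$ admits an $r$-cover of cardinality at most $(3\delta/r)^{d_1}$, and a change of variable gives
\begin{align*}
\mathbb{E}\Big[\sup_{\beta_W\in B(\beta_W^\star,\delta)}|Y_{\beta_W}|\Big] \;\le\; K\int_0^{2\sigma\delta}\sqrt{d_1\log(3\sigma\delta/\epsilon)}\,d\epsilon \;\le\; K'\sqrt{d_1}\,\sigma\delta \;=\; C\sqrt{K_n}\,M_n^{5+C_W}\,\delta,
\end{align*}
after absorbing $\sqrt{d_1}$, $L_W$ and universal constants into the final $C$. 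The between-block statement is obtained by the same two-step procedure, with $|E_{k,l}^{(n)}|\le M_n^2$ in place of $|E_{k,k}^{(n)}|\le \binom{M_n}{2}$ and the between-block variant of Lemma~\ref{lemma_variation}.

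The main obstacle is controlling $\Vert\overline{\mathcal{D}}\Vert$: without local independence the entire graph would be coupled through a single ERGM and Theorem~\ref{general_concentration_ineq} would yield no useful tail through this route. The block independence between subgraphs is precisely what makes $\overline{\mathcal{D}}$ block-diagonal and keeps $\Vert\overline{\mathcal{D}}\Vert$ at the polynomial order $O(M_n^2)$, which is exactly the slack required to obtain the exponent $5+C_W$ on $M_n$ in the final bound.
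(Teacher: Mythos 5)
Your proposal is correct and follows essentially the same route as the paper: a pointwise $\psi_2$-norm (sub-Gaussian) increment bound obtained by combining Theorem~\ref{general_concentration_ineq} with the variation bound of Lemma~\ref{lemma_variation} and the estimate $\Vert\overline{\mathcal{D}}\Vert\leq M_n^2$, followed by a chaining/maximal inequality over the ball (the paper uses the Orlicz-norm maximal inequality of van der Vaart and Wellner, which with $\psi_2^{-1}(x)=\sqrt{\log(1+x)}$ is the Dudley entropy integral you invoke). The only cosmetic difference is that you derive $\Vert\overline{\mathcal{D}}\Vert=O(M_n^2)$ from the block-diagonal structure induced by local independence, whereas the paper cites this bound from \cite{stewart2024rates}.
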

\begin{proof}
    We start with the within-block edges. For $\beta_1, \beta_2 \in \mathbb{R}^{d_1}$ let
    \begin{align*}
        \mathbf{Y} = G_W^{(n)}(\mathbf{X}^{(n)},\beta_1) - \mathbb{E}[G_W^{(n)}(\mathbf{X}^{(n)},\beta_1)] - (G_W^{(n)}(\mathbf{X}^{(n)},\beta_2) - \mathbb{E}[G_W^{(n)}(\mathbf{X}^{(n)},\beta_2)]).
    \end{align*}
    Note that with the choice
    \begin{align*}
        a= \sqrt{2}\Vert \overline{\mathcal{D}} \Vert \mathbb{V} (G_W^{(n)}(\mathbbm{x}^{(n)},\beta_1) - \mathbb{E}[G_W^{(n)}(\mathbf{X}^{(n)},\beta_1)] - (G_W^{(n)}(\mathbbm{x}^{(n)},\beta_2) - \mathbb{E}[G_W^{(n)}(\mathbf{X}^{(n)},\beta_2)])),
    \end{align*}
    where $\overline{\mathcal{D}}$ is the matrix defined in \eqref{coupling_matrix} with respect to $\mathbf{X}^{(n)}$ (for an arbitrary ordering of the edges), we have
    \begin{align*}
        \mathbb{E}\bigg[ \exp\bigg( \frac{\vert \mathbf{Y}) \vert }{a}  \bigg)^2 -1  \bigg]  &= \mathbb{E}\bigg[ \int_{0}^{\vert \mathbf{Y} \vert /a}  2\exp(x^2) x dx  \bigg] \\
        &= \int_0^{\infty} 2 \mathbb{P} (\vert \mathbf{Y} \vert > xa   )\exp(x^2) x dx \\
        &= \int_0^{\infty} 4\exp(-4 x^2) \exp(x^2) x dx <1,
    \end{align*}
    where we used Theorem \ref{general_concentration_ineq} and therefore  $\Vert \mathbf{Y} \Vert_{\psi_2} < a$. In \cite{stewart2026rates} it is shown that one has $\Vert \overline{\mathcal{D}} \Vert \leq M_n^2$. This  together with Lemma \ref{lemma_variation} implies
    \begin{align*}
        &\Vert G_W^{(n)}(\mathbf{X}^{(n)},\beta_1) - \mathbb{E}[G_W^{(n)}(\mathbf{X}^{(n)},\beta_1)] - (G_W^{(n)}(\mathbf{X}^{(n)},\beta_2) - \mathbb{E}[G_W^{(n)}(\mathbf{X}^{(n)},\beta_2)]) \Vert_{\psi_2} \\
        & \qquad \qquad \leq \sqrt{2} M_n^2 \bigg( 2K_n\binom{M_n}{2} \bigg)^{1/2}  4 \binom{M_n}{2} \Vert \beta_1 - \beta_2 \Vert L_W M_n^{C_W}.
    \end{align*}
    Now, the maximal inequality \cite[Theorem 2.2.4]{vandervaart2023weak} implies that
    \begin{align*}
        &\mathbb{E}\bigg[\sup_{\beta_W \in B(\beta_W^{\star},\delta)} \Big\vert G_W^{(n)}(\mathbf{X}^{(n)}, \beta_W^{\star}) - \mathbb{E}[ G_W^{(n)}(\mathbf{X}^{(n)}, \beta_W^{\star})] - (G_W^{(n)}(\mathbf{X}^{(n)}, \beta_W) - \mathbb{E}[ G_W^{(n)}(\mathbf{X}^{(n)}, \beta_W)] ) \Big\vert   \bigg] \\
        & \qquad \leq 128 \sqrt{2}  M_n^2 \bigg( 2K_n\binom{M_n}{2} \bigg)^{1/2}  4 \binom{M_n}{2}  L_W M_n^{C_W} \int_0^{2\delta} \psi_2^{-1} \bigg( \bigg( \frac{2\delta\sqrt{d_1}}{\epsilon} \bigg)^{d_1}\bigg) d  \epsilon,
    \end{align*}
    where we used that the covering number with respect to $\epsilon$-balls of $B(\beta_W^{\star},\delta)$ is bounded above by $\Big( \frac{2\delta\sqrt{d_1}}{\epsilon} \Big)^{d_1}$. Note also that we calculated the constant from \cite[Theorem 2.2.4]{vandervaart2023weak} explicitly. We are left with handling the integral, note that $\psi_2^{-1}(x)=\sqrt{ \log(1+x)}$. Then note that the argument inside the logarithm is smaller than $e$ if and only if $ \epsilon > \frac{2\delta \sqrt{d_1}}{(e-1)^{1/d_1}}$. Therefore, the integral can be bounded by
    \begin{align*}
        &\int_0^{\infty} \log \bigg( 1+ \bigg( \frac{2\delta\sqrt{d_1}}{\epsilon} \bigg)^{d_1} \bigg) \mathbbm{1} \bigg\{0 \leq \epsilon \leq \frac{2\delta \sqrt{d_1}}{(e-1)^{1/d_1}}  \bigg \} d \epsilon \\
        & \qquad \qquad \qquad + \int_0^{\infty} \frac{\big(2 \delta \sqrt{d_1} \big)^{d_1/2}}{\epsilon^{d_1/2}} \mathbbm{1} \bigg\{\frac{2\delta \sqrt{d_1}}{(e-1)^{1/d_1}} \leq \epsilon \leq 2\delta  \bigg \} d \epsilon ,
    \end{align*}
    where we used the inequality $\log(1+x) \leq x$ for the second estimate. After a change of variables, the first integral equals
    \begin{align*}
        2\delta\sqrt{d_1} \int_0^{\infty} \log \big( 1+  \epsilon^{-d_1} \big) \mathbbm{1} \bigg\{0 \leq \epsilon \leq \frac{1}{(e-1)^{1/d_1}}  \bigg \} d \epsilon.
    \end{align*}
    which is equal to $ 2\delta \big( \frac{e}{e-1}-\log(e-1) \big) $ if $d_1=1$. If $d_1>1$ the integral can be bounded by $\int_0^{\infty} \log ( 1+  \epsilon^{-d_1} )  d \epsilon$ which evaluates to $\frac{\pi}{\sin(\pi/d_1)}$. The second integral can be bounded by
    \begin{align*}
        \frac{\big(2 \delta \sqrt{d_1} \big)^{d_1/2}}{\Big( \frac{2\delta \sqrt{d_1}}{(e-1)^{1/d_1}} \Big)^{d_1/2}} \bigg\vert 2\delta - \frac{2\delta \sqrt{d_1}}{(e-1)^{1/d_1}}  \bigg\vert = \delta \sqrt{e-1} \bigg\vert 2- \frac{2\sqrt{d_1}}{(e-1)^{1/d_1}} \bigg\vert.
    \end{align*}
    Substituting the corresponding terms in the constant and using $M_n(M_n-1)/2 \leq \frac{M_n^2}{2}$ gives the estimate from the statement of the theorem. The statement for the between-block edges follows analogously.
\end{proof}

\section{Auxiliary results for Section \ref{section_asymptotic_normality}}
Similarly as \cite{anastasiou2021wasserstein}, our results employ 
 a Wasserstein bound  from \cite{bonis2020stein}.  Let $X_1,\ldots,X_n$ be independent $\mathbb{R}^d$-valued random vectors such that $\mathbb{E}[X_i]=0$, $i=1,\ldots,n$, $\sum_{i=1}^n \mathbb{E}[X_i X_i^{\top}] = nI_d $ and $\sum_{i=1}^n \mathbb{E}[ \Vert X_i \Vert^4] < \infty $. 
\begin{Theorem}[{\cite[Theorem 5 with $m=2$]{bonis2020stein}}] \label{quant_clt_bonis}
Let ${Z}_{d} \sim N(0, I_d).$ Under the above assumptions  we have for  $n \geq 4$
\begin{align*}
     &d_{W_2}\bigg(\frac{1}{\sqrt{n}} \sum_{i=1}^n X_i, {Z}_{d}  \bigg) \leq  \frac{ \big(C \sum_{i=1}^n \mathbb{E}[\Vert X_i \Vert^4] \big)^{1/2}}{n} + \frac{\big( 2 \sum_{i=1}^n \Vert \mathbb{E}[X_i X_i^{\top}] \Vert_F^2 \big)^{1/2}}{n} \\
     & \qquad \qquad + \frac{2}{n^{1/2}} \bigg( \frac{1}{n} \sum_{i=1}^n \sum_{k> 0} \frac{16^k}{2k(2k)!} \Vert \mathbb{E}[X_i X_i^{\top}] \Vert_F^2  \bigg)^{1/4}  \bigg(\frac{1}{n} \sum_{i=1}^n \sum_{k> 0} \frac{16^k}{2k(2k)!} \Vert \mathbb{E}[X_i X_i^{\top}] \Vert_F^2  \\
     & \qquad \qquad \qquad \qquad + \frac{1}{n} \sum_{i=1}^n 8 \Vert \mathbb{E}[X_iX_i^{\top}] \Vert_F^2 \mathbb{E} [ \Vert X_i \Vert^2 ]^2 + 4 \Vert \mathbb{E}[X_i X_i^{\top} \Vert X_i \Vert^2]\Vert_F^2 \bigg)^{1/4},
\end{align*}
where $C=8+\sum_{k>0} \frac{4^k}{kk!}$.
\end{Theorem}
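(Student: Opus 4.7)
The plan is to derive the bound via Stein's method for the multivariate standard normal in combination with Ornstein--Uhlenbeck semigroup smoothing, as in \cite{bonis2020stein}. By the Kantorovich--Rubinstein-type dual representation of $d_{W_2}$ it suffices to control $\mathbb{E}[h(S_n)]-\mathbb{E}[h(Z_d)]$ for a rich class of smooth test functions, where $S_n=n^{-1/2}\sum_{i=1}^n X_i$. Introducing the OU semigroup $P_t h(x)=\mathbb{E}[h(e^{-t}x+\sqrt{1-e^{-2t}}\,Z_d)]$ with generator $L=\Delta-x\cdot\nabla$, and noting that $P_\infty h(x)=\mathbb{E}[h(Z_d)]$, I would write
\[
\mathbb{E}[h(S_n)]-\mathbb{E}[h(Z_d)] \;=\; -\int_0^\infty \mathbb{E}\bigl[L P_t h(S_n)\bigr]\,dt.
\]

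Next I would exploit independence of the $X_i$ by a leave-one-out Taylor expansion. Setting $S_n^{(i)}=S_n-X_i/\sqrt{n}$, which is independent of $X_i$, I expand $\nabla P_t h(S_n)$ and $\nabla^2 P_t h(S_n)$ around $S_n^{(i)}$ up to order three. The centering hypothesis $\mathbb{E}[X_i]=0$ eliminates the zeroth-order contributions in $x\cdot\nabla$, and the normalization $\sum_i\mathbb{E}[X_i X_i^\top]=nI_d$ is exactly what is needed so that the first-order expansion of the Hessian trace $\Delta P_t h$ cancels the second-order contribution to $x\cdot\nabla P_t h$. What is left are quadratic remainders governed by $\|\mathbb{E}[X_i X_i^\top]\|_F^2$ and cubic remainders governed by $\mathbb{E}[\|X_i\|^4]$.

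To convert these algebraic remainders into the quantitative bound, I would combine Cauchy--Schwarz with sharp $L^2$-derivative estimates of the OU semigroup, namely $\|\nabla^k P_t h\|_{L^2(\gamma_d)}\lesssim e^{-kt}(1-e^{-2t})^{-(k-1)/2}\|\nabla h\|_{L^2(\gamma_d)}$, and then integrate in $t$. The cubic remainder yields the term $(C\sum_i\mathbb{E}[\|X_i\|^4])^{1/2}/n$; the constant $C=8+\sum_{k>0}4^k/(k\,k!)$ arises by expanding $h$ along Hermite polynomial eigenfunctions of $L$, in which basis $P_t$ acts diagonally and the third-derivative bound reduces to a weighted sum over Hermite indices. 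The quadratic remainders give the Frobenius-norm terms $\|\mathbb{E}[X_i X_i^\top]\|_F^2$, and an additional Cauchy--Schwarz separates the fourth-moment factor from the covariance, producing the mixed contribution $\|\mathbb{E}[X_i X_i^\top\|X_i\|^2]\|_F^2$.

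The main obstacle is not the qualitative Gaussian convergence but the sharp tracking of constants, in particular producing the exact coefficient $\sum_{k>0}16^k/((2k)(2k)!)$ in the form claimed. This coefficient emerges as the $L^2$-norm of an integrated semigroup applied to a specific quadratic form, after expansion in Hermite polynomials; obtaining it in closed form is a combinatorial bookkeeping exercise that must be carried out carefully to keep the constant computable and avoid losses from triangle-inequality steps. A further technical point is the restriction $n\geq 4$, which enters through Cauchy--Schwarz steps where one needs the quadratic remainder to dominate a lower-order term; this would be tracked by separating the regimes of small and large $t$ in the semigroup integral.
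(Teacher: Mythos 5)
First, note that the paper itself contains no proof of this statement: it is imported verbatim as Theorem 5 of \cite{bonis2020stein}, so there is no internal argument to compare against, and your attempt has to be judged as a reconstruction of Bonis's proof.

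Your sketch has a genuine gap at its very first step. The ``Kantorovich--Rubinstein-type dual representation'' that reduces a Wasserstein distance to controlling $\mathbb{E}[h(S_n)]-\mathbb{E}[h(Z_d)]$ over a class of test functions is a feature of $d_{W_1}$ only (it is the duality the present paper records for $d_{W_1}$ in Section 4.2). For $p=2$ the Kantorovich dual involves a \emph{pair} of conjugate potentials $(\varphi,\psi)$ with $\varphi(x)+\psi(y)\le \Vert x-y\Vert^2$, and it cannot be collapsed into a single difference of expectations of a smooth $h$; the leave-one-out Taylor expansion of $P_t h$ combined with semigroup derivative estimates that you describe is precisely the classical route to $W_1$ or smooth-metric CLT bounds, and the whole point of \cite{bonis2020stein} is that this route does not yield $W_2$. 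The actual proof instead works on the measure side: it runs the law of $S_n$ along the Ornstein--Uhlenbeck flow $\mu_t$ and uses the Benamou--Brenier/Otto-calculus inequality
\begin{equation*}
  d_{W_2}(\mu,\gamma) \;\le\; \int_0^\infty \Big\Vert \nabla \log \tfrac{d\mu_t}{d\gamma} \Big\Vert_{L^2(\mu_t)}\, dt ,
\end{equation*}
bounding the integrated (relative-Fisher-information-type) Stein discrepancy of $\mu_t$ via conditional-expectation representations of the score, into which independence, the centering $\mathbb{E}[X_i]=0$ and the normalisation $\sum_i\mathbb{E}[X_iX_i^\top]=nI_d$ enter. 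Your leave-one-out and cancellation ideas do reappear there, but applied to the score of $\mu_t$, not to derivatives of a test function.

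Because of this, the explanations you give for the specific constants --- $C=8+\sum_{k>0}4^k/(k\,k!)$ arising from Hermite expansion of third derivatives of $h$, the coefficient $\sum_{k>0}16^k/(2k(2k)!)$ from ``an integrated semigroup applied to a quadratic form,'' and the restriction $n\ge 4$ from a small-$t$/large-$t$ split --- are speculative and do not follow from the argument as written; they are artefacts of explicit computations in the $W_2$ framework above. As it stands the proposal would, at best, establish a $d_{W_1}$ bound of a similar shape, which is strictly weaker than the stated $d_{W_2}$ inequality (the paper uses $d_{W_2}\le d_{W_1}$ in the opposite, trivial direction, so this does not rescue the claim).
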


\begin{Lemma} \label{lemma_clt_est_eq}
    Suppose Assumption \ref{ass_growth_statis} holds. Let ${Z}_{d_j} \sim N(0,I_{d_j})$, for $j=1, 2$.  Then, for each $n$ we have
    \begin{align*}
         &d_{W_2}\Big( \mathbb{E}\big[g_W^{(n)}(\mathbf{X}^{(n)},\beta_W^{\star})g_W^{(n)}(\mathbf{X}^{(n)},\beta_W^{\star})^{\top} \big]^{-1/2} g_W^{(n)}(\mathbf{X}^{(n)},\beta_W^{\star}) , {Z}_{d_1})  \Big) \\
         & \qquad \qquad \leq \frac{1}{\sqrt{K_n}} \bigg(\bigg( 8+\sum_{k>0} \frac{4^k}{kk!} \bigg)^{1/2} + \sqrt{2}\bigg) \frac{d_1^{3/4} L_W^2}{\min\big\{\Upsilon_W^{(n)},\big(\Upsilon_W^{(n)}\big)^{3/4}\big\}} { M_n^{3C_W+6} } ; \\
         &d_{W_2}\Big( \mathbb{E}\big[g_B^{(n)}(\mathbf{X}^{(n)},\beta_B^{\star})g_B^{(n)}(\mathbf{X}^{(n)},\beta_B^{\star})^{\top} \big]^{-1/2} g_B^{(n)}(\mathbf{X}^{(n)},\beta_B^{\star}) , {Z}_{d_2}  \Big) \\
         & \qquad \qquad \leq \frac{1}{\sqrt{K_n}} \bigg(\bigg( 8+\sum_{k>0} \frac{4^k}{kk!} \bigg)^{1/2} + \sqrt{2}\bigg) \frac{4d_2^{3/4} L_B^2}{\min\big\{\Upsilon_B^{(n)},\big(\Upsilon_B^{(n)}\big)^{3/4}\big\}} {M_n^{3C_B+6} }.
    \end{align*}
\end{Lemma}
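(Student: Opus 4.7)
The plan is to exploit the block-independence structure of the LERGM to express $g_W^{(n)}(\mathbf{X}^{(n)},\beta_W^{\star})$ as a sum of independent mean-zero random vectors indexed by the blocks, and then invoke Theorem \ref{quant_clt_bonis}. Concretely, define for $k=1,\ldots,K_n$
\begin{align*}
    Y_k^{(n)} = \sum_{m\in E_{k,k}^{(n)}}\Big(\sigma(\langle\beta_W^{\star},\Delta_m s_{k,k}^{(n)}(\mathbf{X}_{k,k}^{(n)})\rangle)\Delta_m s_{k,k}^{(n)}(\mathbf{X}_{k,k}^{(n)}) + s_{k,k}^{(n)}(\Diamond_m^0\mathbf{X}_{k,k}^{(n)}) - s_{k,k}^{(n)}(\mathbf{X}_{k,k}^{(n)})\Big),
\end{align*}
so that $g_W^{(n)}(\mathbf{X}^{(n)},\beta_W^{\star})=\sum_{k=1}^{K_n}Y_k^{(n)}$. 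The within-block subgraphs $\mathbf{X}_{k,k}^{(n)}$ are independent across $k$ by definition of LERGM, hence so are the $Y_k^{(n)}$; and Theorem \ref{theorem_stein_identity}, applied blockwise to $\mathcal{A}_{\beta_W^{\star}}^{k,k}$ with the test function $s_{k,k}^{(n)}$, gives $\mathbb{E}[Y_k^{(n)}]=0$. Writing $\Sigma_W^{(n)}:=\mathbb{E}[g_W^{(n)}(\mathbf{X}^{(n)},\beta_W^{\star})g_W^{(n)}(\mathbf{X}^{(n)},\beta_W^{\star})^{\top}]=\sum_k\mathbb{E}[Y_k^{(n)}(Y_k^{(n)})^{\top}]$ and setting $X_k:=\sqrt{K_n}\,(\Sigma_W^{(n)})^{-1/2}Y_k^{(n)}$, we have $\sum_k\mathbb{E}[X_kX_k^{\top}]=K_n I_{d_1}$ and $(1/\sqrt{K_n})\sum_k X_k=(\Sigma_W^{(n)})^{-1/2}g_W^{(n)}(\mathbf{X}^{(n)},\beta_W^{\star})$, placing us exactly in the setting of Theorem \ref{quant_clt_bonis} with $n=K_n$.

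To control the three terms appearing in Theorem \ref{quant_clt_bonis}, two uniform estimates suffice. First, Assumption \ref{ass_growth_statis} together with $|E_{k,k}^{(n)}|\leq\binom{M_n}{2}$ and $|\sigma|\leq 1$ gives the pointwise bound $\Vert Y_k^{(n)}\Vert\leq L_W M_n^{C_W+2}$. Second, blockwise independence gives $\Sigma_W^{(n)}=\sum_k\mathbb{E}[Y_k^{(n)}(Y_k^{(n)})^{\top}]$, so the definition of $\Upsilon_W^{(n)}$ yields $\lambda_{\min}(\Sigma_W^{(n)})\geq K_n\Upsilon_W^{(n)}$, whence $\Vert(\Sigma_W^{(n)})^{-1/2}\Vert\leq (K_n\Upsilon_W^{(n)})^{-1/2}$ and consequently $\Vert X_k\Vert\leq L_W M_n^{C_W+2}/\sqrt{\Upsilon_W^{(n)}}$ almost surely. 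Combining these with the PSD trace inequalities $\Vert\mathbb{E}[X_kX_k^{\top}]\Vert_F\leq\mathrm{tr}(\mathbb{E}[X_kX_k^{\top}])=\mathbb{E}[\Vert X_k\Vert^2]$ and $\Vert\mathbb{E}[X_kX_k^{\top}\Vert X_k\Vert^2]\Vert_F\leq\mathbb{E}[\Vert X_k\Vert^4]$ (both of which follow from $\sum_j\lambda_j^2\leq(\sum_j\lambda_j)^2$ for nonnegative eigenvalues), and using the global identity $\sum_k\mathbb{E}[\Vert X_k\Vert^2]=K_n d_1$, each of the three summands of Theorem \ref{quant_clt_bonis} reduces to a polynomial expression in $L_W$, $M_n$, $d_1$ and $(\Upsilon_W^{(n)})^{-1}$, all multiplied by $K_n^{-1/2}$. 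The leading contribution comes from the third summand and its fourth-moment-like term $\Vert\mathbb{E}[X_kX_k^{\top}\Vert X_k\Vert^2]\Vert_F^2$, producing the rate $M_n^{3C_W+6}/\sqrt{K_n}$; the factor $\min\{\Upsilon_W^{(n)},(\Upsilon_W^{(n)})^{3/4}\}^{-1}$ in the denominator then absorbs the two regimes $\Upsilon_W^{(n)}\gtrless 1$ that arise from combining the half- and three-quarters-power factors coming from the two fourth roots in Theorem \ref{quant_clt_bonis}.

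For the between-block statement the same template applies to $g_B^{(n)}(\mathbf{X}^{(n)},\beta_B^{\star})=\sum_{1\leq k<l\leq K_n}Y_{k,l}^{(n)}$, where the $Y_{k,l}^{(n)}$ are independent and mean-zero for the same reasons as above; now $\binom{K_n}{2}\geq K_n$ summands enter and $|E_{k,l}^{(n)}|\leq M_n^2$ rather than $M_n^2/2$, producing the extra factor $4$ that appears in the bound. The main obstacle is the bookkeeping in the third summand of Theorem \ref{quant_clt_bonis}: each of the three quantities inside the second fourth root has to be bounded tightly enough using the trace inequalities above together with $\sum_k\mathbb{E}[X_kX_k^{\top}]=K_n I_{d_1}$, rather than via the naive $\Vert M\Vert_F^2\leq d\Vert M\Vert^2$, so that the final exponent of $(\Upsilon^{(n)}_{\bullet})^{-1}$ is no worse than $1$ (yielding the $\min\{\Upsilon,\Upsilon^{3/4}\}^{-1}$ factor) and the powers of $M_n$, $L_{\bullet}$ and $d_i$ combine into the stated form.
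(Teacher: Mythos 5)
Your proposal is correct and follows essentially the same route as the paper's proof: decompose $g_{\bullet}^{(n)}(\mathbf{X}^{(n)},\beta_{\bullet}^{\star})$ into independent, mean-zero block contributions, standardise by $\mathbb{E}[g_{\bullet}^{(n)}g_{\bullet}^{(n)\top}]^{-1/2}$ so that Theorem \ref{quant_clt_bonis} applies with $n=K_n$, and control all moment terms via the pointwise bound $\Vert Y_k^{(n)}\Vert \leq L_{\bullet} M_n^{C_{\bullet}+2}$ from Assumption \ref{ass_growth_statis} together with Weyl's inequality giving $\lambda_{\min}\big(\mathbb{E}[g_{\bullet}^{(n)}g_{\bullet}^{(n)\top}]\big)\geq K_n\Upsilon_{\bullet}^{(n)}$. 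The only (immaterial) divergence is in the bookkeeping of the Frobenius-norm terms, where you use the trace bound for positive semi-definite matrices while the paper uses $\Vert M\Vert_F^2\leq d\,\Vert M\Vert^2$ — note this choice affects only the power of $d_i$, not the exponent of $(\Upsilon_{\bullet}^{(n)})^{-1}$ as your last paragraph suggests.
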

\begin{proof}
    We start with the within-block edges, applying Theorem \ref{quant_clt_bonis} to the $\mathbb{R}^{d_1}$-valued random vectors
    \begin{align*}
        Y_k^{(n)}=&\sqrt{K_n} \mathbb{E}\big[g_W^{(n)}(\mathbf{X}^{(n)},\beta_W^{\star})g_W^{(n)}(\mathbf{X}^{(n)},\beta_W^{\star})^{\top} \big]^{-1/2} \\
        &\qquad \times \sum_{m\in E_{k,k}^{(n)}} \sigma ( \langle \beta_W^{\star} , \Delta_m s_{k,k}^{(n)}(\mathbf{X}_{k,k}^{(n)}) \rangle )  \Delta_m s_{k,k}(\mathbf{X}_{k,k}^{(n)}) +  s_{k,k}^{(n)}(\Diamond_{m}^{0} \mathbf{X}_{k,k}^{(n)}) - s_{k,k}^{(n)}(\mathbf{X}_{k,k}^{(n)})
    \end{align*}
    for $k=1,\ldots,K_n$. Note that all assumptions of Theorem \ref{quant_clt_bonis} are easily satisfied as we have a discrete distribution on a finite state space, and edges from different blocks  are independent. First note that
    \begin{align*}
         &\mathbb{E}\big[g_W^{(n)}(\mathbf{X}^{(n)},\beta_W^{\star})g_W^{(n)}(\mathbf{X}^{(n)},\beta_W^{\star})^{\top} \big] \\
         &= \sum_{1 \leq k \leq K_n} \mathrm{Var} \bigg[ \sum_{m\in E_{k,k}^{(n)}} \sigma ( \langle \beta_W^{\star} , \Delta_m s_{k,k}^{(n)}(\mathbf{X}_{k,k}^{(n)}) \rangle )  \Delta_m s_{k,k}(\mathbf{X}_{k,k}^{(n)}) +  s_{k,k}^{(n)}(\Diamond_{m}^{0} \mathbf{X}_{k,k}^{(n)}) - s_{k,k}^{(n)}(\mathbf{X}_{k,k}^{(n)}) \bigg]
    \end{align*}
    since edges from different blocks  are independent and $\mathbb{E}[g_W^{(n)}(\mathbf{X}^{(n)},\beta_W^{\star})]=0$.  We have
    \begin{align*}
        \big\Vert \mathbb{E}&[Y_k^{(n)}(Y_k^{(n)})^{\top}] \big\Vert_F^2 \leq K_n^2 d_1 \Big\Vert \mathbb{E}\big[g_W^{(n)}(\mathbf{X}^{(n)},\beta_W^{\star})g_W^{(n)}(\mathbf{X}^{(n)},\beta_W^{\star})^{\top} \big]^{-1/2} \Big\Vert^4 \\
        &\times \bigg\Vert \mathbb{E} \bigg[  \bigg(\sum_{m\in E_{k,k}^{(n)}} \sigma ( \langle \beta_W^{\star} , \Delta_m s_{k,k}^{(n)}(\mathbf{X}_{k,k}^{(n)}) \rangle )  \Delta_m s_{ k,k}(\mathbf{X}_{k,k}^{(n)}) +  s_{k,k}^{(n)}(\Diamond_{m}^{0} \mathbf{X}_{k,k}^{(n)}) - s_{k,k}^{(n)}(\mathbf{X}_{k,k}^{(n)}) \bigg) \\
        &\times \bigg(\sum_{m\in E_{k,k}^{(n)}} \sigma ( \langle \beta_W^{\star} , \Delta_m s_{k,k}^{(n)}(\mathbf{X}_{k,k}^{(n)}) \rangle )  \Delta_m s_{k,k}(\mathbf{X}_{k,k}^{(n)}) +  s_{k,k}^{(n)}(\Diamond_{m}^{0} \mathbf{X}_{k,k}^{(n)}) - s_{k,k}^{(n)}(\mathbf{X}_{k,k}^{(n)}) \bigg)^{\top} \bigg] \bigg\Vert_F^2 \\
        \leq &  \frac{d_1}{\big(\Upsilon_W^{(n)}\big)^2}  \mathbb{E}\bigg[ \bigg\Vert \sum_{m\in E_{k,k}^{(n)}} \sigma ( \langle \beta_W^{\star} , \Delta_m s_{k,k}^{(n)}(\mathbf{X}_{k,k}^{(n)}) \rangle )  \Delta_m s_{k,k}(\mathbf{X}_{k,k}^{(n)}) +  s_{k,k}^{(n)}(\Diamond_{m}^{0} \mathbf{X}_{k,k}^{(n)}) - s_{k,k}^{(n)}(\mathbf{X}_{k,k}^{(n)}) \bigg\Vert^4 \bigg] \\
        \leq & d_1 \binom{M_n}{2}^4 \big(\Upsilon_W^{(n)}\big)^{-2} \big(2 L_W M_n^{C_W}  \big)^4,
    \end{align*}
    where we used that $\Vert A^{-1/2} \Vert = 1/\sqrt{\lambda_\mathrm{min}(A)}$ for a positive semi-definite matrix $A \in \mathbb{R}^{d \times d}$, Weyl's inequality as well as Assumption \ref{ass_growth_statis}. In a similar way one finds
    \begin{align*}
         \Vert \mathbb{E}[Y_k^{(n)}(Y_k^{(n)})^{\top} \Vert Y_k^{(n)} \Vert^2 ] \Vert_F^2 & \leq d_1^2 \binom{M_n}{2}^8 \big(\Upsilon_W^{(n)}\big)^{-4} \big(2 L_W M_n^{C_W}  \big)^8, \\
         \mathbb{E}[ \Vert Y_k^{(n)}  \Vert^4 ] &\leq d_1 \binom{M_n}{2}^4 \big(\Upsilon_W^{(n)}\big)^{-2} \big(2 L_W M_n^{C_W}  \big)^4, 
    \end{align*}
    and 
    \begin{align*}
         \mathbb{E}[ \Vert Y_k^{(n)} \Vert^2 ]^2 \leq d_1 \binom{M_n}{2}^4 \big(\Upsilon_W^{(n)}\big)^{-2} \big(2 L_W M_n^{C_W}  \big)^4.
    \end{align*}
    Theorem \ref{quant_clt_bonis} then immediately yields the bound
    \begin{align*}
        &d_{W_2}\Big( \mathbb{E}\big[g_W^{(n)}(\mathbf{X}^{(n)},\beta_W^{\star})g_W^{(n)}(\mathbf{X}^{(n)},\beta_W^{\star})^{\top} \big]^{-1/2} g_W^{(n)}(\mathbf{X}^{(n)},\beta_W^{\star}) , N(0,I_{d_1})  \Big) \\
        & \qquad \leq \bigg(\bigg( 8+\sum_{k>0} \frac{4^k}{kk!} \bigg)^{1/2} + \sqrt{2}\bigg) \sqrt{d_1} \binom{M_n}{2}^2 \big(\Upsilon_W^{(n)}K_n\big)^{-1} \big(2 L_W M_n^{C_W}  \big)^2  \\
        & \qquad \qquad + 2\bigg( \sum_{k> 0} \frac{16^k}{2k(2k)!} \bigg)^{1/4} d_1^{1/4} \binom{M_n}{2} \big(\Upsilon_W^{(n)} K_n\big)^{-1/2}  2 L_W M_n^{C_W} \\
        & \qquad \qquad  \qquad \qquad \times \bigg( \bigg( \sum_{k>0} \frac{16^k}{2k(2k)!} \bigg) d_1 \binom{M_n}{2}^4 \big(\Upsilon_W^{(n)}\big)^{-2} \big(2 L_W M_n^{C_W}  \big)^4 \\
        & \qquad \qquad  \qquad \qquad \qquad \qquad  \qquad \qquad + 12 d_1^2 \binom{M_n}{2}^8 \big(\Upsilon_W^{(n)}\big)^{-4} \big(2 L_W M_n^{C_W}  \big)^8 \bigg)^{1/4}.
    \end{align*}
    Using that several quantities above are integer-valued, and $M_n(M_n-1)/2 \leq \frac{M_n^2}{2}$, the concavity of the function $x \mapsto x^{1/4}$ gives the result.
\end{proof}

\section*{Acknowledgements}
Our thanks goes to the careful reviewers who, amongst other helpful suggestions,  pointed out the connection with pseudo-maximum likelihood estimation.

This research was funded, in part, by the UKRI EPSRC grants EP/T018445/1, EP/R018472/1, as well as EP/X002195/1 and  EP/Y028872/1.  WX acknowledges support from EPSRC grant EP/T018445/1 and the support from DFG EXC number 2064/1, Project number 390727645. For the purpose of Open Access, the authors have applied a CC BY public copyright licence to any Author Accepted Manuscript version arising from this submission.

\bibliography{library}

@article{strauss1990pseudolikelihood,
  title={Pseudolikelihood estimation for social networks},
  author={Strauss, David and Ikeda, Michael},
  journal={Journal of the American Statistical Association},
  volume={85},
  number={409},
  pages={204--212},
  year={1990},
  publisher={Taylor \& Francis}
}

@book{sundberg2019statistical,
  title={Statistical modelling by exponential families},
  author={Sundberg, Rolf},
  year={2019},
  publisher={Cambridge University Press}
}

@article{schmid2023computing,
  title={Computing Pseudolikelihood Estimators for Exponential-Family Random Graph Models.},
  author={Schmid, Christian S and Hunter, David R},
  journal={Journal of Data Science},
  volume={21},
  number={2},
  year={2023}
}

@article{anastasiou2021wasserstein,
  title={Wasserstein distance error bounds for the multivariate normal approximation of the maximum likelihood estimator},
  author={Anastasiou, Andreas and Gaunt, Robert E},
  journal={Electronic Journal of Statistics},
  volume={15},
  number={2},
  pages={5758--5810},
  year={2021},
  publisher={The Institute of Mathematical Statistics and the Bernoulli Society}
}

@article{yon2021exponential,
  title={Exponential random graph models for little networks},
  author={Yon, George G Vega and Slaughter, Andrew and de la Haye, Kayla},
  journal={Social Networks},
  volume={64},
  pages={225--238},
  year={2021},
  publisher={Elsevier}
}

@article{snijders2002markov,
  title={Markov chain {M}onte {C}arlo estimation of exponential random graph models},
  author={Snijders, Tom AB},
  journal={Journal of Social Structure},
  volume={3},
  number={2},
  pages={1--40},
  year={2002}
}

@article{raivc2019multivariate,
  title={A multivariate {B}erry--{E}sseen theorem with explicit constants},
  author={Rai{\v{c}}, Martin},
  journal={Bernoulli},
  volume={25},
  number={4A},
  pages={2824--2853},
  year={2019},
  publisher={JSTOR}
}

@article{anastasiou2023stein,
  title={Stein’s method meets computational statistics: {A} review of some recent developments},
  author={Anastasiou, Andreas and Barp, Alessandro and Briol, Fran{\c{c}}ois-Xavier and Ebner, Bruno and Gaunt, Robert E and Ghaderinezhad, Fatemeh and Gorham, Jackson and Gretton, Arthur and Ley, Christophe and Liu, Qiang and others},
  journal={Statistical Science},
  volume={38},
  number={1},
  pages={120--139},
  year={2023},
  publisher={Institute of Mathematical Statistics}
}

@inproceedings{asuncion2010learning,
  title={Learning with blocks: Composite likelihood and contrastive divergence},
  author={Asuncion, Arthur and Liu, Qiang and Ihler, Alexander and Smyth, Padhraic},
  booktitle={Proceedings of the Thirteenth International Conference on Artificial Intelligence and Statistics},
  pages={33--40},
  year={2010},
  organization={JMLR Workshop and Conference Proceedings}
}

@article{schweinberger2020exponential,
  title={Exponential-family models of random graphs},
  author={Schweinberger, Michael and Krivitsky, Pavel N and Butts, Carter T and Stewart, Jonathan R},
  journal={Statistical Science},
  volume={35},
  number={4},
  pages={627--662},
  year={2020},
  publisher={JSTOR}
}

@article{stewart2026pseudo,
  title={Pseudo-likelihood-based {M}-estimation of random graphs with dependent edges and parameter vectors of increasing dimension},
  author={Stewart, Jonathan R and Schweinberger, Michael},
  journal={The Annals of Statistics},
  volume={54},
  number={1},
  pages={74--92},
  year={2026},
  publisher={Institute of Mathematical Statistics}
}

@book{harris2013introduction,
  title={An {I}ntroduction to {E}xponential {R}andom {G}raph {M}odeling},
  author={Harris, Jenine K},
  year={2013},
  publisher={Sage Publications}
}

@book{wasserman1994social,
  title={Social {N}etwork {A}nalysis: Methods and {A}pplications},
  author={Wasserman, Stanley and Faust, Katherine},
  year={1994},
  publisher={Cambridge University Press}
}

@book{lusher2013exponential,
  title={Exponential {R}andom {G}raph {M}odels for {S}ocial {N}etworks: {T}heory, {M}ethods, and {A}pplications},
  author={Lusher, Dean and Koskinen, Johan and Robins, Garry},
  year={2013},
  publisher={Cambridge University Press}
}

@Manual{ergmmulti,
    author = {Pavel N. Krivitsky},
    title = {ergm.multi: Fit, Simulate and Diagnose Exponential-Family
      Models for Multiple or Multilayer Networks},
    organization = {The Statnet Project (\url{https://statnet.org})},
    year = {2024},
    note = {R package version 0.2.1},
    url = {https://CRAN.R-project.org/package=ergm.multi}
}

@article{stivala2020exponential,
  title={Exponential random graph model parameter estimation for very large directed networks},
  author={Stivala, Alex and Robins, Garry and Lomi, Alessandro},
  journal={PloS one},
  volume={15},
  number={1},
  pages={e0227804},
  year={2020},
  publisher={Public Library of Science San Francisco, CA USA}
}

@article{newey1994large,
  title={Large sample estimation and hypothesis testing},
  author={Newey, Whitney K and McFadden, Daniel},
  journal={Handbook of Econometrics},
  volume={4},
  pages={2111--2245},
  year={1994},
  publisher={Elsevier}
}

@article{anastasiou2020bounds,
  title={Bounds for the asymptotic distribution of the likelihood ratio},
  author={Anastasiou, Andreas and Reinert, Gesine},
  journal={The Annals of Applied Probability},
  volume={30},
  number={2},
  pages={608--643},
  year={2020},
  publisher={JSTOR}
}

@article{besag1974spatial,
  title={Spatial interaction and the statistical analysis of lattice systems},
  author={Besag, Julian},
  journal={Journal of the Royal Statistical Society: Series B (Methodological)},
  volume={36},
  number={2},
  pages={192--225},
  year={1974},
  publisher={Wiley Online Library}
}

@article{liang1986longitudinal,
  title={Longitudinal data analysis using generalized linear models},
  author={Liang, Kung-Yee and Zeger, Scott L},
  journal={Biometrika},
  volume={73},
  number={1},
  pages={13--22},
  year={1986},
  publisher={Oxford University Press}
}

@article{huber1964robust,
  title={Robust Estimation of a Location Parameter},
  author={Huber, Peter J},
  journal={The Annals of Mathematical Statistics},
  volume={35},
  number={1},
  pages={73--101},
  year={1964},
  publisher={Institute of Mathematical Statistics}
}

@article{brown1986fundamentals,
  title={Fundamentals of Statistical Exponential Families with Applications in Statistical Decision Theory},
  author={Brown, Lawrence D},
  journal={IMS Lecture Notes ---Monograph Series},
  volume={9},
  pages={i--279},
  year={1986},
  publisher={JSTOR}
}

@article{ergm,
    title = {{ergm}: A Package to Fit, Simulate and Diagnose
      Exponential-Family Models for Networks},
    author = {David R. Hunter and Mark S. Handcock and Carter T. Butts
      and Steven M. Goodreau and Martina Morris},
    journal = {Journal of Statistical Software},
    year = {2008},
    volume = {24},
    number = {3},
    pages = {1--29},
    doi = {10.18637/jss.v024.i03}
}

@article{ergm_new,
    title = {{ergm} 4: New Features for Analyzing Exponential-Family
      Random Graph Models},
    author = {Pavel N. Krivitsky and David R. Hunter and Martina Morris
      and Chad Klumb},
    journal = {Journal of Statistical Software},
    year = {2023},
    volume = {105},
    number = {6},
    pages = {1--44},
    doi = {10.18637/jss.v105.i06}
}

@article{mukherjee2013statistics,
  title={Statistics of the two star {ERGM}},
  author={Mukherjee, Sumit and Xu, Yuanzhe},
  journal={Bernoulli},
  volume={29},
  number={1},
  pages={24--51},
  year={2023},
  publisher={Bernoulli Society for Mathematical Statistics and Probability}
}

@article{mijoule2023stein,
  title={Stein’s density method for multivariate continuous distributions},
  author={Mijoule, Guillaume and Rai{\v{c}}, Martin and Reinert, Gesine and Swan, Yvik},
  journal={Electronic Journal of Probability},
  volume={28},
  pages={1--40},
  year={2023},
  publisher={The Institute of Mathematical Statistics and the Bernoulli Society}
}

@article{schweinberger2015local,
  title={Local dependence in random graph models: characterization, properties and statistical inference},
  author={Schweinberger, Michael and Handcock, Mark S},
  journal={Journal of the Royal Statistical Society Series B: Statistical Methodology},
  volume={77},
  number={3},
  pages={647--676},
  year={2015},
  publisher={Oxford University Press}
}

@article{stewart2026rates,
  title={Rates of convergence and normal approximations for estimators of local dependence random graph models},
  author={Stewart, Jonathan R},
  journal={Bernoulli},
  volume={32},
  number={1},
  pages={127--152},
  year={2026},
  publisher={Bernoulli Society for Mathematical Statistics and Probability}
}

@article{schweinberger2020concentration,
  title={Concentration and consistency results for canonical and curved exponential-family models of random graphs},
  author={Schweinberger, Michael and Stewart, Jonathan},
  journal={The Annals of Statistics},
  volume={48},
  number={1},
  pages={374--396},
  year={2020},
  publisher={JSTOR}
}

@article{ebner2025stein,
  title={Stein's method of moments},
  author={Ebner, Bruno and Fischer, Adrian and Gaunt, Robert E and Picker, Babette and Swan, Yvik},
  journal={Scandinavian Journal of Statistics},
  volume={52},
  number={4},
  pages={1594--1624},
  year={2025},
  publisher={Wiley Online Library}
}

@article{arnold2001multivariate,
    author = {Barry C. Arnold, Enrique Castillo and José María Sarabia},
    title = {A MULTIVARIATE VERSION OF {S}TEIN'S IDENTITY WITH APPLICATIONS TO MOMENT CALCULATIONS AND ESTIMATION OF CONDITIONALLY SPECIFIED DISTRIBUTIONS},
    journal = {Communications in Statistics - Theory and Methods},
    volume = {30},
    number = {12},
    pages = {2517--2542},
    year = {2001},
    publisher = {Taylor \& Francis},
    doi = {10.1081/STA-100108446}
}

@article{fischer2026stein,
  title={Stein’s Method of Moments on the Sphere},
  author={Fischer, Adrian and Gaunt, Robert E and Swan, Yvik},
  journal={Bernoulli},
  volume={32},
  number={2},
  pages={1186--1212},
  year={2026},
  publisher={Bernoulli Society for Mathematical Statistics and Probability}
}

@article{fischer2025stein,
  title={Stein’s method of moments for truncated multivariate distributions},
  author={Fischer, Adrian and Gaunt, Robert E and Swan, Yvik},
  journal={Electronic Journal of Statistics},
  volume={19},
  number={1},
  pages={1784--1808},
  year={2025},
  publisher={The Institute of Mathematical Statistics and the Bernoulli Society}
}

@article{reinert2019approximating,
  title={Approximating stationary distributions of fast mixing {G}lauber dynamics, with applications to exponential random graphs},
  author={Reinert, Gesine and Ross, Nathan},
  journal={The Annals of Applied Probability},
  volume={29},
  number={5},
  pages={3201--3229},
  year={2019},
  publisher={Institute of Mathematical Statistics}
}

@article{chazottes2007concentration,
  title={Concentration inequalities for random fields via coupling},
  author={Chazottes, J -R and Collet, Pierre and K{\"u}lske, Christof and Redig, Frank},
  journal={Probability Theory and Related Fields},
  volume={137},
  pages={201--225},
  year={2007},
  publisher={Springer}
}

@book{vandervaart2023weak,
    author = {van der Vaart, A W. and Wellner, Jon A.},
    address = {Cham},
    booktitle = {Weak Convergence and Empirical Processes, With Applications to Statistics},
    edition = {2nd},
    isbn = {3-031-29040-2},
    keywords = {Statistics},
    language = {eng},
    publisher = {Springer International Publishing},
    series = {Springer Series in Statistics},
    title = {Weak Convergence and Empirical Processes: With Applications to Statistics },
    year = {2023}
}

@article{bonis2020stein,
  title={Stein’s method for normal approximation in {W}asserstein distances with application to the multivariate central limit theorem},
  author={Bonis, Thomas},
  journal={Probability Theory and Related Fields},
  volume={178},
  number={3},
  pages={827--860},
  year={2020},
  publisher={Springer}
}

@article{mukherjee2020degeneracy,
    author = {Sumit Mukherjee},
    title = {{Degeneracy in sparse ERGMs with functions of degrees as sufficient statistics}},
    volume = {26},
    journal = {Bernoulli},
    number = {2},
    publisher = {Bernoulli Society for Mathematical Statistics and Probability},
    pages = {1016 -- 1043},
    keywords = {Degeneracy, ERGM, normalizing constant, sparse graphs},
    year = {2020},
    doi = {10.3150/19-BEJ1135},
    URL = {https://doi.org/10.3150/19-BEJ1135}
}
\bibliographystyle{abbrv}

\end{document}